\newtheorem{theorem}{Theorem}[section]
\newtheorem*{theoremstar}{Theorem~\ref{thm:binomialzeros}}
\newtheorem{lemma}[theorem]{Lemma}
\newtheorem{proposition}[theorem]{Proposition}
\newtheorem{corollary}[theorem]{Corollary}
\newtheorem{observation}[theorem]{Observation}
\theoremstyle{definition}
\newtheorem{remark}[theorem]{Remark}
\newtheorem{example}[theorem]{Example}
\newtheorem{definition}[theorem]{Definition}
\newcommand{\CC}{\mathbbm{C}}
\newcommand{\NN}{\mathbbm{N}}
\newcommand{\RR}{\mathbbm{R}}
\newcommand{\e}{\varepsilon}
\newcommand{\oW}{\overline{W}}
\newcommand{\tW}{\widetilde{W}}
\newcommand{\ta}{\tilde a}
\newcommand{\ow}{\overline{w}}
\newcommand{\bz}{{\bar z}}
\newcommand{\bJ}{{\bar J}}
\newcommand{\cS}{\mathcal{S}}
\newcommand{\DD}{\mathcal{D}}
\newcommand{\cB}{\mathcal{B}}
\DeclareMathOperator{\Real}{Re}
\DeclareMathOperator{\Imag}{Im}
\DeclareMathOperator{\sign}{sign}
\DeclareMathOperator{\cl}{cl}
\newcommand{\fl}[1]{\left\lfloor{#1}\right\rfloor}
\newcommand{\iverson}[1]{\left[\!\left[{#1}\right]\!\right]}
\newcommand{\pos}[1]{{}_{\RR_{\ge0}}\langle{#1}\rangle}
\begin{document}

\title{Gale duality bounds for roots \linebreak of
  polynomials with nonnegative coefficients}
\author{Julian Pfeifle}
\address{Departament de Matem\`atica Aplicada II, Universitat
  Polit\`ecnica de Catalunya}
\email{julian.pfeifle@upc.edu}
\begin{abstract}
  We bound the location of roots of polynomials that have nonnegative
  coefficients with respect to a fixed but arbitrary basis of the
  vector space of polynomials of degree at most~$d$. For this, we
  interpret the basis polynomials as vector fields in the real plane,
  and at each point in the plane analyze the combinatorics of the Gale
  dual vector configuration. This approach permits us to incorporate
  arbitrary linear equations and inequalities among the coefficients
  in a unified manner to obtain more precise bounds on the location of
  roots. We apply our technique to bound the location of roots of
  Ehrhart and chromatic polynomials.  Finally, we give an explanation
  for the clustering seen in plots of roots of random polynomials.
\end{abstract}
\maketitle

\section{Introduction}

The Ehrhart polynomial of a $d$-dimensional lattice polytope $Q$ is a
real polynomial of degree~$d$, which has the following two representations:
\[
  i_Q \ = \ i_Q(z) \ = \ \sum_{j=0}^d c_j z^j \ = \
  \sum_{i=0}^d a_i \binom{z+d-i}{d}.\label{eq:ip}
\]
Here we chose the letter $z$ for the independent variable in order to
emphasize that we think of~$i_Q$ as a polynomial defined over the
complex numbers.  The coefficients $c_0$, $c_{d-1}$ and $c_d$ in the
first representation are positive, while the others generally can
vanish or take on either sign. In contrast, a famous theorem of
Stanley \cite{Stanley80} asserts that all coefficients~$a_i$ of~$i_Q$
in the latter representation are nonnegative, $a_i\ge0$ for $0\le i\le
d$.

Such nonnegativity information is also available for other
combinatorially defined polynomials, a case in point being the
chromatic polynomial of a graph (cf. Proposition~\ref{prop:chromatic}
below). An early example of how combinatorial information might be
gleaned from studying roots of such polynomials is the
\emph{Birkhoff--Lewis Conjecture}, which asserts that no chromatic
polynomial has a root in the real interval $[4,\infty)$. Somewhat
ironically, even though it was formulated as a new inroad towards
settling the Four Color Conjecture (which it implies), the latter is
now a Theorem, while the former is still open.  Nevertheless, since at
least 1965~\cite{Hall-Siry-Vanderslice1965}, the complex roots of
chromatic polynomials have received close scrutiny. A well-known
recent result by Sokal~\cite{Sokal2004} states that their complex
roots are dense in the entire complex plane, if one allows
arbitrarily large graphs. He was motivated by applications in physics
to the \emph{Potts model partition function}.

Coming back to Ehrhart polynomials, first bounds obtained in
\cite{Beck-etal05} on the location of the roots of~$i_Q$ for fixed $d$
were
substantially improved by Braun~\cite{Braun06} and Braun \&
Develin~\cite{Braun-Develin06}. All of these papers use the
nonnegativity of the $a_i$'s, but Braun's crucial new insight is to
think of the value $i_Q(z)$ at each $z\in\CC$ as a linear combination
with nonnegative coefficients of the $d+1$~complex numbers
$b_i=b_i(z)=\binom{z+d-i}{d}$. In particular, for $z_0$~to be a zero
of $i_Q$, there must be a nonnegative linear combination of
the~$b_i(z_0)$ that sums to zero.

In this paper, we extend and generalize Braun's bounds on the location
of roots for the binomial coefficient basis. We propose a unified
approach using \emph{Gale duality} to bound the location of roots, that

\begin{compactitem}
\item works in exactly the same way for \emph{all} bases of the vector
  space $P_d$ of polynomials of degree at most~$d$
  (Theorem~\ref{thm:roots1}), and

\item allows one to incorporate arbitrary additional linear equations
  and inequalities between the coefficients~$a_i$ beyond mere
  nonnegativity (Theorem~\ref{thm:ineqs}). This is applied in
  Section~\ref{sec:applications} to the case of Ehrhart and chromatic
  polynomials (Figures~\ref{fig:plotd10}~and~\ref{fig:ChrEq}).
\end{compactitem}

We apply our approach in Section~\ref{sec:bounds} to explicitly bound
the location of the roots of polynomials with nonnegative coefficients
with respect to four common bases of $P_d$; the detailed treatment of
the binomial coefficient basis comprises
Section~\ref{sec:binomial}. Throughout, we focus on bounding the
location of the non-real roots, as the case of real roots is much more
straightforward (Observation~\ref{obs:realroots}).

\medskip In Section~\ref{sec:random}, we use our method to explain why
the roots of ``random'' polynomials with nonnegative coefficients (for
a suitable meaning of ``random'') tend to clump together, by tracing
this behavior back to properties of the basis polynomials
(Figures~\ref{fig:contour}~and~\ref{fig:contourother}).

\subsection{Sketch of the method}

Let $B=\{b_0,\dots,b_d\}$ be any basis of~$P_d$, the
$(d+1)$-dimensional vector space of real polynomials of degree at
most~$d$ in one variable.

\begin{compactitem}
\item We regard $B$ as a collection of \emph{vector fields}: for each
  complex number $z\in\CC$, the basis elements $b_0(z),\dots, b_d(z)$
  define a configuration $\cB(z)=\big(w_0(z),\dots,w_d(z)\big)$ of
  real vectors $w_j(z)=(\Real b_j(z), \Imag b_j(z))^T$ in the plane
  $\RR^2$. This point of view converts the algebraic problem of
  bounding the location of roots of a polynomial into a combinatorial
  problem concerning the discrete geometry of vector configurations.
  
\item We analyze the combinatorics of 
  $\cB(z)$ in terms of  the Gale dual 
  configuration~$\cB^*(z)$. In particular, there exists a polynomial
  $f=\sum_{i=0}^d a_i b_i(z)$ with nonnegative coefficients $a_i\ge0$
  and a root at $z=z_0$ whenever the vector
  configuration~$\cB(z_0)$ has a nonnegative circuit, and this occurs
  whenever $\cB^*(z_0)$~has a nonnegative cocircuit.

  The important point is that we obtain a semi-explicit expression
  for~$\cB^*$ for \emph{any} basis of~$P_d$, not just the binomial
  coefficient basis. In fact, for the power basis $b_i=z^i$, the
  rising and falling factorial bases $b_i=z^{\overline i},
  z^{\underline i}$, and the binomial coefficient basis
  $b_i=\binom{z+d-i}{d}$ we can make the Gale dual completely
  explicit.

\item In concrete situations one often has more information about~$f$. Gale duality naturally
  allows to incorporate any linear equations and inequalities on the
  coefficients, and in some cases this leads to additional
  restrictions on the location of roots.

\item
  As an illustration, we show how the inequality $a_d\le a_0+a_1$ that
  is valid for Ehrhart polynomials further constrains the location of
  the roots of~$i_Q$. We also study the case of chromatic polynomials,
  for which Brenti~\cite{Brenti92} has shown the nonnegativity of the
  coefficients with respect to the binomial coefficient basis.

\item 
  Braun~\&~Develin~\cite{Braun-Develin06} derive an implicit equation
  for a curve $\mathcal{C}$ bounding the possible locations of roots
  of $f=\sum_{i=0}^d a_i\binom{z+d-i}{d}$, and our method gives an
  explicit equation for a real algebraic curve whose outermost oval is
  precisely~$\mathcal{C}$.

\end{compactitem}

It is instructive to visualize the vector fields $w_0,\dots,w_d$ for
the binomial coefficient basis, i.e., when 
$b_j(z)=\binom{z+d-j}{d}=R_j(z)+i I_j(z)$; recall that
$w_j(z)=\big(R_j(z),I_j(z)\big)^T$.

\begin{figure}[htbp]
  \centering
  \includegraphics[width=.55\linewidth]{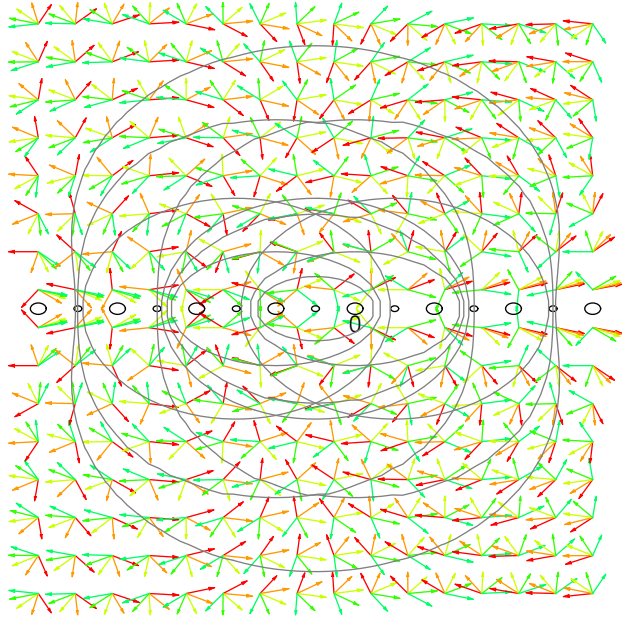}
  \caption{The values of $\{\binom{z+d-i}{d}:0\le i\le d\}$ at
    different points in the complex plane, for $d=6$. All vectors are
    normalized to the same length. In gray, the locus of points where
    two vectors become collinear.}
  \label{fig:flowers6}
\end{figure}

From Figure~\ref{fig:flowers6}, it appears that at points far away
from the origin the vectors~$w_i$ are all ``acute'', i.e., contained
in a half-plane (that varies from point to point), while closer
to the origin they positively span the entire space. If true in
general, this would imply that \emph{far away from the origin, $f$
  cannot have any roots}.

The detailed analysis (and proof) of this observation will take up the
bulk of the paper, Sections~\ref{sec:gale}~to~\ref{sec:applications},
and in this special case may be summarized as follows:

\begin{theoremstar}
  Let $d$ be a positive integer and $\mathcal Z_d$ the set of complex,
  non-real numbers that are zeros of non-identically vanishing polynomials
  of the form 
  \[
    f(z) \ = \ \sum_{j=0}^d a_j \binom{z+d-j}{d},
  \] 
  with $a_j\ge0$ for $j=0,\dots,d$. Then $\mathcal Z_d$ is the set of
  non-real points in the region
  bounded by the outermost oval of the real algebraic curve of degree
  $d-1$ in the complex plane with equation
  \[
    \frac{\binom{z}{d}\overline{\binom{z+d}{d}}
      - \overline{\binom{z}{d}}\binom{z+d}{d}}{z-\bz}
    \ = \
    0,
  \]
  where $\bar\cdot$ denotes complex conjugation.  This bound is tight,
  in the sense that any point inside~$\mathcal Z_d$ is a root of some
  such~$f$. Moreover, there is an explicit representation of this
  equation as the determinant of a tridiagonal matrix;
  see~\eqref{eq:tridiagonal} and Proposition~\ref{prop:D}.

  The real roots of any such $f$ all lie in the real interval $[-d,d-1]$.
\end{theoremstar}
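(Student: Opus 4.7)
\medskip
\noindent\textbf{Proof plan.} My strategy is to translate the algebraic condition into a geometric one on the vector configuration $\mathcal{B}(z_0)=\{w_j(z_0)\}_{j=0}^{d}\subset\RR^2$, and then to identify the bounding curve explicitly.

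By the Gale-duality criterion recalled above, $z_0\in\mathcal Z_d$ if and only if $\mathcal B(z_0)$ admits a nonnegative circuit, which in the plane is equivalent to the vectors $w_j(z_0)$ not all lying in a closed half-plane. The first key step is an \emph{angular ordering lemma}: from the factorization $\binom{z+d-j}{d}=\tfrac{1}{d!}\prod_{m=1}^{d}(z-j+m)$ one computes
\[
\frac{b_{j+1}(z)}{b_j(z)}\ =\ \frac{z-j}{z+d-j},
\]
whose imaginary part equals $d\,\Imag z_0/|z_0+d-j|^2>0$ whenever $\Imag z_0>0$. Consequently, along a continuous branch of $\arg$ the sequence $\arg b_0(z_0),\dots,\arg b_d(z_0)$ is strictly increasing, the cone positively spanned by the $w_j(z_0)$ has extremal rays $w_0$ and $w_d$, and the configuration fits in a closed half-plane precisely when the total sweep $\arg w_d-\arg w_0$ is at most~$\pi$.

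The boundary of $\mathcal Z_d$ is therefore the locus where $w_0(z_0)$ and $w_d(z_0)$ are antiparallel, equivalently where $\binom{z_0+d}{d}\,\overline{\binom{z_0}{d}}$ is a negative real. The vanishing of its imaginary part is exactly
\[
\binom{z}{d}\,\overline{\binom{z+d}{d}}-\overline{\binom{z}{d}}\,\binom{z+d}{d}\ =\ 0,
\]
and because both binomials have real coefficients this expression vanishes identically for $z=\bz$; dividing out the factor $z-\bz$ gives the stated real algebraic equation. As $|z_0|\to\infty$ one has $b_j(z_0)\sim z_0^d/d!$ for every $j$, so the angular sweep tends to $0$; the unbounded component of the complement of the curve is therefore disjoint from $\mathcal Z_d$, and hence $\mathcal Z_d$ is exactly the set of non-real points enclosed by the outermost oval. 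Tightness then follows from Carath\'eodory: at every non-real interior point the $w_j(z_0)$ positively span $\RR^2$, yielding $a_j\ge 0$ (not all zero) with $\sum_j a_jw_j(z_0)=0$, which delivers a polynomial $f$ with a root at $z_0$.

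Finally, for the real-root bound I would argue directly from $b_j(t)=\binom{t+d-j}{d}$: for $t>d-1$ every root of every $b_j$ lies strictly to the left of $t$, so $b_j(t)>0$; for $t<-d$ all $b_j(t)$ share the common sign $(-1)^d$. Either way no nontrivial nonnegative combination vanishes, so all real roots lie in $[-d,d-1]$. The main obstacle I anticipate is the angular-ordering step: making it fully rigorous requires careful handling of branches of $\arg$ and, more importantly, ruling out that some \emph{inner} oval of the curve might delimit a further component of $\mathcal Z_d$. The monotonicity lemma is the crucial lever that collapses the infinite family of half-plane constraints into a single algebraic condition involving only the extreme pair $(b_0,b_d)$.
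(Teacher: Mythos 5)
Your overall strategy is sound and is in fact a close cousin of the paper's own: the monotonicity computation $\Imag\big(b_{j+1}(z)/b_j(z)\big)=d\,\Imag z/|z+d-j|^2>0$ is exactly the mechanism behind Corollary~\ref{cor:cone} (there phrased via the increments $\beta_j=\arg(z+d-j)-\arg(z-j)$), and your reformulation of membership in $\mathcal Z_d$ as ``total sweep $\arg(b_d/b_0)\ge\pi$'' is the angle-sum condition~\eqref{eq:asum} with $l\ge1$. Where you genuinely differ is that you exploit the monotonicity at \emph{every} point of the upper half-plane, whereas the paper establishes pointedness of the cone only for $z=N+i\e$ far away and then propagates it by a continuity argument across the collinearity loci $\DD_{j,k}$, using the Gale-dual determinants $D_{j,k}$ and the nesting statement of Proposition~\ref{prop:bounded}, together with the stratification into the shells $S_k$ for the tightness claim. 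Your route, if completed, is more economical for this particular basis, since it bypasses the Gale-dual machinery; the paper's route has the advantage of generalizing verbatim to the other bases and to the constrained settings of Section~\ref{sec:linear}. Your tightness argument (Carath\'eodory at points with sweep $>\pi$, antiparallel pair on the boundary) and your real-root argument are fine as stated.

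The genuine gap is the passage from ``$\partial\mathcal Z_d$ lies on the curve $D_{0,d}=0$'' to ``$\mathcal Z_d$ is the region bounded by the outermost oval.'' Writing $A(z)$ for the continuously-branched sweep, you correctly obtain $\mathcal Z_d\cap\{\Imag z>0\}=\{A\ge\pi\}$ and $\partial\mathcal Z_d\subset\{A\in\pi\mathbb{Z}\}$, and you show $A<\pi$ on the unbounded complementary component of the curve. But the antiparallel/collinear locus you identify is the \emph{whole} curve, whose $d-1$ components correspond to $A=\pi,2\pi,\dots,(d-1)\pi$, and nothing in your argument shows that $\{A=\pi\}$ is a single closed curve, nor excludes a bounded complementary component \emph{inside} the outermost oval on which $A<\pi$ --- a ``hole'' in $\mathcal Z_d$. (Your own worry about inner ovals is a symptom of exactly this missing piece.) This is precisely what the paper supplies in Propositions~\ref{prop:smooth} and~\ref{prop:ovals}: $\DD_{0,d}$ is smooth, each level $A=l\pi$ is a single oval, and the ovals are nested, so $\{A\ge\pi\}$ is exactly the closed region bounded by the outermost one. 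You would need to prove a statement of this kind --- either by the ray argument of Proposition~\ref{prop:ovals}, or by observing that $A=\Imag\sum_i\log\frac{z-a_i}{z+a_i}$ is harmonic off the real axis and invoking the minimum principle to rule out interior pockets with $A<\pi$ --- before the identification of $\mathcal Z_d$ with the outermost oval's interior is actually established rather than asserted.
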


From contemplating Figure~\ref{fig:flowers6}, a naive strategy for
bounding the locations of the roots comes to mind: First, try to prove
that for ``far away''~$z$ the $w_i(z)$ positively span a convex
pointed $2$-dimensional cone~$\tau$. Then determine the generators
$w_k(z)$, $w_l(z)$ of its facets, and the locus $\mathcal C$ of all
$z\in\CC$ for which these facet vectors ``tip over'', i.e., become
collinear. By continuity, for $z_0$ inside $\mathcal C$ the origin is
a nonnegative linear combination of the $w_i$, and thus $z_0$ is a
possible root.

The alert reader will perhaps have lost track of even the
\emph{number} of holes in this argument! As a sample, it is a priori
not clear (but true, at least for the binomial coefficient basis) that
the $w_i(z)$ in fact span a pointed cone for \emph{all}~$z$ of large
enough absolute value. It is even less clear (but true in this case) that the
vectors spanning facets of~$\tau$ far away from the origin will still
define facets just before $\tau$ ceases to be convex closer to the
origin.  Furthermore, the locus $\mathcal C$ might (and does) have
multiple components, suggesting that one has to exercise more care
when talking about points $z_0$ ``inside'' $\mathcal C$.

However, the real problem with this approach lies with the fact that
the locus of collinearity of $w_i(z)$ and $w_j(z)$ is the vanishing
locus of the determinant
$
  \Delta_{ij} =  \left|
    \begin{smallmatrix}
      R_i & R_j\\
      I_i & I_j
    \end{smallmatrix}
  \right|,
$
and evaluating this polynomial explicitly quickly becomes a daunting
task; moreover, it is not at all clear how the knowledge of
$\Delta_{ij}$ for any particular basis would help for other bases
of~$P_d$. 

We now present our method that overcomes all these obstacles.

\section{Gale duality}\label{sec:gale}

\subsection{Overview}

Consider a polynomial $f=\sum_{i=0}^d a_i b_i$ of degree~$d$, expanded
with respect to a basis $B=\{b_0,\dots,b_d\}$ of $P_d$, the vector
space of all polynomials in one complex variable of degree at
most~$d$. For the moment, we will focus on the complex, non-real roots
of~$f$. To find these, rewrite the real and complex parts
of the condition $f(z) = 0$ in the form
\begin{equation}
  \label{eq:primal}
  \begin{pmatrix}
    R_0 & R_1 & \dots & R_d\\
    I_0 & I_1 & \dots & I_d
  \end{pmatrix}
  \begin{pmatrix}
    a_0\\ a_1\\\vdots\\a_d
  \end{pmatrix}
  \ = \ 0,
\end{equation}
where $R_j=R_j(x,y)$ and $I_j=I_j(x,y)$ stand for the real and
imaginary parts of the polynomial $b_j(x+iy)$.

As suggested in the Introduction, we now regard each basis element
$b_i$ not as a complex polynomial, but as a real vector
$w_i(x,y)=(R_i,I_i)^T\in\RR^2$. Then there exists some polynomial~$f$
with a root at $z=x+iy$ if and only if there exist real coefficients
$a_0,\dots,a_d$ with 
\[
  \sum_{i=0}^d a_i w_i(x,y) \ = \ 0.
\] 
If we impose the additional restriction that the $a_i$ be nonnegative
but not all zero, this is only possible if the positive span of the
$w_i$ includes the origin. Among all such linear combinations summing
to zero, we now consider only \emph{support-minimal} ones, i.e., those
with the minimum number of nonzero coefficients~$a_i$. In oriented
matroid terminology, the ordered collection~$\sigma$ of signs of the
coefficients of such a support-minimal linear combination is called a
\emph{circuit} of the (full-dimensional) vector configuration
$W=(w_0,\dots,w_d)\subset\RR^2$. To proceed, we regard $W$ as a
$2\times (d+1)$-matrix.  A \emph{Gale dual} vector configuration
$\oW=(\ow_0,\dots,\ow_d)\subset\RR^{d-1}$ of $W$ is the ordered set of
rows of any matrix, also called $\oW$, whose columns form a basis for
the (row) kernel of the matrix $W$, so that $W\oW=0$~\cite{Ziegler98}.
Gale duality states that the collection of signs~$\sigma$~is a
\emph{cocircuit} of~$\oW$. This means that there exists a linear form
$g$ on $\RR^{d-1}$ with $(\sign g(\ow_i):i=0,\dots,d)=\sigma$.

Clearly, any circuit of $W$ has either two or three non-zero entries
(unless it is the zero circuit, which we exclude from the discussion).
Because $z_0$ is a root of $f$ if and only if there exists a
nonnegative circuit of
$W(z_0)=\big(w_0(z_0),\dots,w_d(z_0)\big)\subset\RR^2$, by Gale
duality this happens if and only if there exists a cocircuit of
$\oW(z_0)=\big(\ow_0(z_0),\dots,\ow_d(z_0)\big)\subset\RR^{d-1}$, i.e.,
if and only if there exists a linear form on $\RR^{d-1}$ that vanishes
on all of the $\ow_i$ except for either two or three of them, and on
those evaluates to the same sign. Geometrically, there must exist a
linear hyperplane in $\RR^{d-1}$ that contains all vectors $\ow_i$
except for two or three, and has those on the same side.

Thus, we have traded the search for the locus of two collinear vectors
among the $w_i\in\RR^2$ (a problem involving only two pieces of input
data) for the task of finding a Gale dual $\oW$ in the much
higher-dimensional space $\RR^{d-1}$, and hyperplanes passing through
almost all of the~$\ow_i$~---~a problem involving almost the entire
input!

That this is not crazy, but instead effective, is explained by the
fact that passing to the higher-dimensional representation is possible
in great generality, and moreover greatly simplifies the structure of
the problem; see Proposition~\ref{prop:dualmatrix} below.

\subsection{Implementation}

Let $B=\{b_i:0\le i\le d\}$ be any basis of~$P_d$.

\subsubsection{The Gale dual}

Form the matrix
\[
   W \ = \ W(x,y) \ = \   
   \begin{pmatrix}
     R_0 & R_1 & \dots & R_d\\
     I_0 & I_1 & \dots & I_d
   \end{pmatrix},
\]
where $R_j=R_j(x,y)$ and $I_j=I_j(x,y)$ denote the real and imaginary
part of the complex polynomial
$b_j=b_j(x+iy)$. 
The rank of $W$ is $2$, 
so any Gale dual matrix $\oW$
to~$W$ has size $(d+1)\times(d-1)$. The following proposition gives an
explicit representative for~$\oW$ involving polynomials $p_k,q_k,r_k$
that depend on the basis~$B$. For four especially relevant bases, we
will make the Gale dual $\oW$ completely explicit. These bases are:
\begin{itemize}
\item The \emph{power basis}, where $b_i=z^i$;
\item the \emph{falling factorial basis}, where $b_i=z^{\underline
    i}=z(z-1)\cdots(z-i+1)$;
\item the \emph{rising factorial basis}, where
  $b_i=z^{\overline{i}}=z(z+1)\cdots(z+i-1)$; and
\item the \emph{binomial coefficient basis}, where $b_i=\binom{z+d-i}{d}$.
\end{itemize}
Here $z^0=z^{\underline 0}=z^{\overline 0}=1$. 

\begin{proposition}\label{prop:dualmatrix}
  A Gale dual matrix to $W$ may be chosen to have exactly three non-zero
  diagonals
  \begin{equation}\label{eq:dualmatrix}
     \oW \ = \ \oW(x,y) \ = \
     \begin{pmatrix}
       \phantom{+}p_0 & 0 & 0 & \dots & 0\\
       -q_0 & \phantom{+}p_{1} & 0 & \dots & 0\\
       \phantom{+}r_0 & -q_{1} & \ddots&&\vdots\\
       0 & \phantom{+}r_1 & \ddots & \ddots\\
       \vdots && \ddots&\ddots& \phantom{+}p_{d-2}\\
       &&& r_{d-3} & -q_{d-2}\\
       0 &\dots&& 0 & \phantom{+}r_{d-2}
     \end{pmatrix}.
  \end{equation}
  Moreover, its entries may be chosen to lie in $\RR[x,y]$. For the
  four bases considered, we may choose the following explicit values:
  \begin{center}\renewcommand{\arraystretch}{1.2}
    \begin{tabular}[b]{c||c|c|c} 
      $b_i$ & $p_k$ & $q_k$ & $r_k$\\\hline\hline
      $z^i$ & $x^2+y^2$ & $2x$ & $1$\\
      $z^{\underline i}$ & $(x-k)^2+y^2$ & $2(x-k)-1$ & $1$\\
      $z^{\overline i}$ & $(x+k)^2+y^2$ & $2(x+k)+1$ & $1$\\
      $\binom{z+d-i}{d}$ & $(x-k)^2+y^2$ & $p_k+r_k-d(d-1)$ &
      $p_{k+1-d}$  
    \end{tabular}.
  \end{center}
  Note that in the last row,
  $q_k=2\big(x-(k-\tfrac{d-1}{2})\big)^2+2y^2-\tfrac{d^2-1}{2}$. 
\end{proposition}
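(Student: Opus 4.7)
The plan is to reduce the Gale-duality condition $W\oW = 0$ to a family of three-term recurrences on consecutive basis elements, and then verify those recurrences for each of the four bases.

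Given the tridiagonal support pattern prescribed in~\eqref{eq:dualmatrix}, the product $W\oW$ breaks into $d-1$ independent conditions, one per column:
\[
  p_k(x,y)\, b_k(z) \;-\; q_k(x,y)\, b_{k+1}(z) \;+\; r_k(x,y)\, b_{k+2}(z) \;=\; 0, \qquad k = 0,\dots,d-2,
\]
which must hold identically for $z=x+iy\in\CC$. Because the top nonzero entry $p_k$ of the $k$-th column sits in a row distinct from that of every other column, the $d-1$ columns are automatically linearly independent wherever the $p_k$ do not vanish, and thus span $\ker W$ at generic points. For the general existence claim over an arbitrary basis, I would invoke Cramer's rule on the triple of planar vectors $w_k,w_{k+1},w_{k+2}$: the signed $2\times 2$ minors of $W$ on columns $k,k+1,k+2$ furnish a valid choice of $(p_k,q_k,r_k)$, each of the form $\Imag\bigl(\overline{b_i(z)}\,b_j(z)\bigr)$, hence a real polynomial in $x,y$.

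It then remains to check that the simpler entries tabulated in the proposition also satisfy the recurrence (they will in general differ from the Cramer values by a common polynomial factor that may be scaled away column by column). For the power basis, dividing through by $z^k$ reduces the identity to $|z|^2 - (z+\bz)z + z^2 = 0$, which is immediate from $|z|^2 = z\bz$. For the falling (resp.\ rising) factorial basis, setting $u = z-k$, $v = \bz-k$ (resp.\ $u = z+k$, $v = \bz+k$) and using $z^{\underline{k+2}} = z^{\underline{k+1}}(z-k-1)$, the recurrence collapses to $uv - (u+v-1)u + u(u-1) = 0$, which is trivial.

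The main obstacle is the binomial coefficient case. Here I would exploit the ratios
\[
  \frac{b_k(z)}{b_{k+1}(z)} \;=\; \frac{z+d-k}{z-k}, \qquad \frac{b_{k+2}(z)}{b_{k+1}(z)} \;=\; \frac{z-k-1}{z+d-k-1}
\]
to eliminate $b_k$ and $b_{k+2}$ from the recurrence; after multiplying by $(z-k)(z+d-k-1)\,b_{k+1}^{-1}$ and cancelling the common factor $(z-k)(z+d-k-1)$, one is left with the scalar identity
\[
  (\bz-k)(z+d-k) \;+\; (\bz+d-k-1)(z-k-1) \;=\; q_k \;=\; p_k + r_k - d(d-1).
\]
Expanding both sides to $2|z|^2 + 2(d-2k-1)x + 2k^2 + 2k - 2kd - d + 1$ confirms the identity; the alternate form of $q_k$ stated at the end of the proposition then follows by completing the square in $x$.
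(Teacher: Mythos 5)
Your proof is correct and follows essentially the same route as the paper: reduce $W\oW=0$ to the three-term syzygy $p_k b_k - q_k b_{k+1} + r_k b_{k+2}=0$ for each column (your sign convention is the one consistent with the matrix and the table), establish existence in general via real $2\times 2$ minors of consecutive columns of $W$, and verify the tabulated values using the ratios $b_{k+1}/b_k$. You are in fact somewhat more explicit than the paper for the binomial coefficient case, where the paper only asserts that the listed triples arise from a basis of the relevant kernel; your expansion of $q_k$ checks out.
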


\begin{proof}
  We first prove that the matrix $\oW$ can be chosen to have the
  displayed triple band structure regardless of the basis $B$ chosen
  for $P_d$. For this, define the rational functions
  $g_k=\frac{b_{k+1}}{b_k}\in\RR(z)$ for $0\le k\le d-1$; specific
  values for $g_k$ become apparent from the relations $z^{k+1}=z\cdot
  z^k$, $z^{\underline{k+1}}=(z-k)z^{\underline{k}}$,
  $z^{\overline{k+1}}=(z+k)z^{\overline{k}}$ and
  $\binom{z+d-k-1}{d}=\frac{z-k}{z+d-k}\binom{z+d-k}{d}$.
  
  The triple $(p_k, q_k, r_k)$ lists nontrivial coefficients of a real
  syzygy
  \[
     p_k b_k + q_k b_{k+1} + r_k b_{k+2} \ = \ 
     b_k\big(p_k + g_k q_k + g_k g_{k+1} r_k \big) \ = \ 0
  \]
  whenever 
  \[
  \begin{pmatrix}
    1 & \Real g_k & \Real g_k g_{k+1} \\
    0 & \Imag g_k & \Imag g_k g_{k+1}
  \end{pmatrix}
  \begin{pmatrix}
    p_k \\ q_k \\ r_k
  \end{pmatrix} \ = \ 
  \begin{pmatrix}
    0\\0
  \end{pmatrix}.
  \]
  But the displayed matrix with entries in $\RR(x,y)$, call it $M$,
  obviously has rank at least~$1$, and rank~$2$ whenever $\Imag
  g_k(x+iy)\ne0$, so that such triples certainly exist. Moreover, by
  multiplying with a common denominator we may assume
  $p_k,q_k,r_k\in\RR[x,y]$, and so the relations $p_k b_k + q_k
  b_{k+1} + r_k b_{k+2}=0$ imply that $\oW$ is in fact a Gale dual of
  $W$.  The concrete syzygies listed above arise by choosing 
  explicit bases for $\ker M$.
\end{proof}

\begin{remark}
  Another interesting case is that of polynomials with symmetric
  coefficients. For instance, if $f=\sum_{i=0}^d a_i\binom{z+d-i}{d}$
  and $a_i=a_{d-i}$, we may expand $f$ in the basis
  $B=\big\{\binom{z+d-i}{d}+\binom{z+i}{d}:0\le i\le
  \fl{\frac{d}{2}}\big\}$ of the vector space of polynomials with
  symmetric coefficients in the binomial coefficient basis. However,
  the coefficients of syzygies of these $b_k$ do not appear to be as
  simple as the ones listed in Proposition~\ref{prop:dualmatrix}. For
  example, a typical coefficient (namely, $q_1$ for $d=8$) reads
  \[
      -8
      \big(((x+\alpha_1)^2+y^2)((x+\alpha_2)^2+y^2)+\gamma_1\big)
      \big(((x+\beta_1)^2+y^2)((x+\beta_2)^2+y^2)+\gamma_2\big)+\gamma,
  \]
  where $\alpha_1,\alpha_2$ are the roots of
  $\alpha^2-\alpha+\rho_1=0$ (so that $\alpha_1+\alpha_2=1$),
  $\beta_1,\beta_2$ are the roots of $\beta^2-\beta+\rho_2=0$,
  $\rho_1,\rho_2$ are the roots of
  $\rho^2-\frac{29}{2}\rho-\frac{231}{8}=0$,
  $\gamma_1+\gamma_2=\frac{135}{4}$,
  $\gamma_1=\frac{135}{8}(1-\frac{61}{\sqrt{649}})$, and
  $\gamma=\frac{874800}{649}$. We will not pursue this basis further
  in this paper.
\end{remark}

\subsubsection{The determinants}

Recall that two vectors $w_j(z),w_k(z)$ become collinear at some point
$z\in\CC$ whenever there exists a circuit of the vector configuration
$W(z)$ with exactly two non-zero entries. By Gale duality, this means that the
Gale dual vector configuration $\oW(z)$ has a cocircuit with support
$2$, i.e., the determinant of the matrix obtained by deleting two rows
from $\oW$ vanishes. Our approach rests on the fact that we can give
fairly explicit expressions for these determinants for the four bases
considered here.

\begin{lemma}\label{lem:det}
  Let $\oW_{(j,k)}=\oW_{(j,k)}(x,y)$ denote the square matrix obtained
  by deleting rows $j$ and $k$ from $\oW$, where $0\le j < k\le d$ (so
  that we number the rows from $0$ to $d$). Then
  \[
     \det\oW_{(j,k)} \ = \ p_0\cdots p_{j-1} D_{j,k} r_{k-1}\cdots r_{d-2},
  \]
  where $D_{j,k}=D_{j,k}(x,y)$ is the determinant of the tridiagonal matrix
  \begin{equation}\label{eq:tridiagonal}
     \begin{pmatrix}
       -q_j & \phantom{+}p_{j+1} & 0 & \dots & 0\\
       \phantom{+}r_j & -q_{j+1} & \ddots&&\vdots\\
       0 & \phantom{+}r_{j+1} & \ddots & \ddots\\
       \vdots && \ddots&\ddots& \phantom{+}p_{k-2}\\
       &&& r_{k-3} & -q_{k-2}\\
     \end{pmatrix}.
  \end{equation}
  Here $D_{j,j+1}:=1$, and the leading resp.\ trailing
  products are $1$ if $j=0$ resp.\ $k=d$. In particular, $D_{j,j+2}=-q_j$.
\end{lemma}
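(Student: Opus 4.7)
The plan is to exploit the tri-band structure of $\oW$ to recognize $\oW_{(j,k)}$ as block (upper) triangular, and read off the three diagonal blocks directly.

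First I would examine where each row of $\oW$ puts its nonzero entries. Since row $i$ of $\oW$ only has potentially nonzero entries in columns $i-2,\,i-1,\,i$ (intersected with $\{0,\dots,d-2\}$), the rows $0,1,\dots,j-1$ that survive deletion have all their support inside the first $j$ columns; symmetrically, each surviving row $i\in\{k+1,\dots,d\}$ has smallest possibly nonzero column $i-2\ge k-1$, so its support sits entirely inside columns $k-1,\dots,d-2$. Group the $d-1$ columns of $\oW_{(j,k)}$ accordingly into three consecutive blocks: leading columns $0,\dots,j-1$ (of size $j$), middle columns $j,\dots,k-2$ (of size $k-j-1$), and trailing columns $k-1,\dots,d-2$ (of size $d-k$). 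Group rows into $0,\dots,j-1$, then $j+1,\dots,k-1$, then $k+1,\dots,d$. The observation about supports means $\oW_{(j,k)}$ then has the shape
\[
  \begin{pmatrix} L & 0 & 0 \\ * & M & * \\ 0 & 0 & U \end{pmatrix},
\]
so after first viewing it as $2\times2$ block-upper-triangular (top two row-blocks vs.\ bottom, with the split between middle and trailing columns) and then reading the upper-left as block-lower-triangular (leading vs.\ middle columns, top two row-blocks), its determinant is $\det L\cdot\det M\cdot\det U$.

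Next I would identify the three diagonal blocks. The block $L$ uses rows $0,\dots,j-1$ and columns $0,\dots,j-1$; by the band description, $L_{ii}=p_i$ and $L_{i,i'}=0$ for $i'>i$, so $L$ is lower triangular with $\det L = p_0\cdots p_{j-1}$ (an empty product when $j=0$). Similarly, the block $U$ uses rows $k+1,\dots,d$ against columns $k-1,\dots,d-2$; setting $s=i-k-1$ and $t=i'-k+1$, the entry of $U$ in local position $(s,t)$ is $r_{s+k-1}$ when $t=s$, $-q_{s+k}$ when $t=s+1$, and $p_{s+k+1}$ when $t=s+2$, so $U$ is upper triangular with diagonal $r_{k-1},\dots,r_{d-2}$ and $\det U=r_{k-1}\cdots r_{d-2}$ (an empty product when $k=d$). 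Finally, the middle block $M$ uses rows $j+1,\dots,k-1$ and columns $j,\dots,k-2$; reindexing to local coordinates $(s,t)=(i-j-1,\,i'-j)$ gives subdiagonal entries $r_{s+j-1}$, diagonal entries $-q_{s+j}$, and superdiagonal entries $p_{s+j+1}$, which is exactly the tridiagonal matrix~\eqref{eq:tridiagonal} whose determinant is $D_{j,k}$.

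Combining these three computations yields $\det\oW_{(j,k)}=p_0\cdots p_{j-1}\cdot D_{j,k}\cdot r_{k-1}\cdots r_{d-2}$, and the special conventions $D_{j,j+1}=1$ (empty middle block) and $D_{j,j+2}=-q_j$ ($1\times 1$ middle block) fall out of the same formula. The only real step to verify is the support claim that drives block triangularity, and that is immediate from the banded shape of $\oW$ in~\eqref{eq:dualmatrix}; there are no essential obstacles, only bookkeeping about off-by-one indices in the row/column partition.
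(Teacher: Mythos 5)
Your proof is correct and follows the same route as the paper's (very terse) argument: the banded support of the rows forces $\oW_{(j,k)}$ into the three-block shape, with the stray entries $r_{j-1}$ and $p_{k-1}$ landing in the off-diagonal $*$ blocks where they cannot contribute, and the two-step block-triangular reading gives $\det L\cdot\det M\cdot\det U$. Your index bookkeeping for $L$, $M$, and $U$ checks out, so this is just a fully detailed version of the paper's one-line proof.
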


\begin{proof}
  The matrix $\oW_{(j,k)}$ decomposes into three blocks, whose
  determinants yield the stated formula, and two additional elements
  $r_{j-1}$ and $p_{k-1}$ that do not contribute to $\det\oW_{(j,k)}$.
\end{proof}

\begin{proposition}\label{prop:D}
  Set $z=x+iy$ and $\bz=x-iy$. Then
  \[
    D_{j,k}(x,y) \ = \
    \frac{(-1)^{k-j-1}}{z-\bz}\big( f_{j,k}(z)-f_{j,k}(\bz) \big),
  \]  
  where the polynomials $f_{j,k}(z)$ are given in the following table:
  \begin{center}
    \begin{tabular}[c]{c|c}
      $b_i$ & $f_{j,k}(z)$\\\hline
      $z^i$ & $z^{k-j}$ \\
      $z^{\underline{i}}$ & $(z-k+1)\cdots(z-j)$ \\
      $z^{\overline{i}}$ & $(z+k-1)\cdots(z+j)$ \\
      $\binom{z+d-i}{d}$ & 
      $\frac{1}{d}(z-k+1)\cdots(z-j)(\bz+d-k+1)\cdots(\bz+d-j)$
    \end{tabular}
  \end{center}
  The $D_{j,k}$ are real polynomials 
  with even degrees in $y$.
\end{proposition}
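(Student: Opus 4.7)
The plan is to prove the formula by induction on $k-j$ using a three-term recurrence for $D_{j,k}$. A Laplace expansion of the tridiagonal determinant \eqref{eq:tridiagonal} along its last column gives
\[
D_{j,k} \;=\; -q_{k-2}\,D_{j,k-1} \;-\; p_{k-2}\,r_{k-3}\,D_{j,k-2} \qquad(k\geq j+3),
\]
with base values $D_{j,j+1}=1$ and $D_{j,j+2}=-q_j$ as recorded in Lemma~\ref{lem:det}. It then suffices to check that the candidate $\tilde D_{j,k} := \frac{(-1)^{k-j-1}}{z-\bar z}\bigl(f_{j,k}(z)-f_{j,k}(\bar z)\bigr)$ satisfies the same recurrence and the same base cases in each of the four bases. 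After clearing the alternating signs and the common factor $z-\bar z$, the recurrence to verify reads
\[
f_{j,k}(z)-f_{j,k}(\bar z) \;=\; q_{k-2}\bigl(f_{j,k-1}(z)-f_{j,k-1}(\bar z)\bigr) \;-\; p_{k-2}r_{k-3}\bigl(f_{j,k-2}(z)-f_{j,k-2}(\bar z)\bigr).
\]

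In each basis, $f_{j,k}(z)$ extends $f_{j,k-1}(z)$ by one extra multiplicative factor $\mu_z$, and analogously on the $\bar z$ side by $\mu_{\bar z}$. Matching the coefficients of $f_{j,k-2}(z)$ and $f_{j,k-2}(\bar z)$ on the two sides reduces the identity above to the two algebraic relations
\[
q_{k-2} \;=\; \mu_z + \mu_{\bar z}' \;=\; \mu_{\bar z} + \mu_z' \qquad\text{and}\qquad p_{k-2}\,r_{k-3} \;=\; \mu_z'\,\mu_{\bar z}',
\]
where $\mu_z'$ denotes the analogous step factor going from $f_{j,k-2}(z)$ to $f_{j,k-1}(z)$. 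For the power, falling, and rising factorial bases, $\mu_z,\mu_z'$ depend only on $z$ (respectively $\bar z$), and these two relations collapse to Cayley--Hamilton-type identities $q_{k-2} = \mu_z+\mu_{\bar z}$ and $p_{k-2}r_{k-3} = \mu_z\mu_{\bar z}$ that are read off directly from Proposition~\ref{prop:dualmatrix} (e.g.\ $q_{k-2}=z+\bar z$, $p_{k-2}r_{k-3}=z\bar z$ for the power basis).

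The main obstacle is the binomial coefficient basis, where the step factor genuinely mixes $z$ and $\bar z$: writing $\psi_\ell := (z-\ell)(\bar z+d-\ell)$ and $\bar\psi_\ell := (\bar z-\ell)(z+d-\ell)$, one has $f_{j,k}(z) = \psi_{k-1} f_{j,k-1}(z)$, so $\mu_z=\psi_{k-1}$ and $\mu_z'=\psi_{k-2}$ really differ. The required product identity becomes $p_{k-2}r_{k-3} = \psi_{k-2}\bar\psi_{k-2}$, and this is transparent once one observes that $r_{k-3}=p_{k-2-d}$ (from the table in Proposition~\ref{prop:dualmatrix}), so that $\psi_{k-2}\bar\psi_{k-2}$ factors into the two $(x-\cdot)^2+y^2$ terms $p_{k-2}$ and $p_{k-2-d}$. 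The required sum identity becomes the pair of twin relations $q_{k-2} = \psi_{k-1}+\bar\psi_{k-2} = \bar\psi_{k-1}+\psi_{k-2}$, which I expect to verify by a direct expansion of $q_k=p_k+r_k-d(d-1)$ using the shifted variables $\alpha=z-(k-2)$, $\beta=\bar z-(k-2)$.

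The base case $\tilde D_{j,j+1}=1$ is a short direct computation (for the binomial basis it reduces to $\psi_j-\bar\psi_j = d(z-\bar z)$), and $\tilde D_{j,j+2}=-q_j$ is a one-step application of the same algebra that drives the inductive step. The polynomial $D_{j,k}$ lies in $\RR[x,y]$ because the matrix entries $p_\ell,q_\ell,r_\ell$ do, and the formula for $D_{j,k}$ is invariant under $y\mapsto -y$---that substitution swaps $z\leftrightarrow\bar z$, flipping the signs of both the numerator $f_{j,k}(z)-f_{j,k}(\bar z)$ and the denominator $z-\bar z$ simultaneously---so $D_{j,k}$ has even degree in $y$.
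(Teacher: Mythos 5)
Your proof is correct and follows essentially the same route as the paper, which likewise derives the formula from the three-term recursion for tridiagonal determinants together with the boundary conditions $D_{j,j+1}=1$ and $D_{j,j+2}=-q_j$ (the paper leaves the verification to the reader, whereas you carry it out, including the genuinely mixed binomial case via $\psi_\ell,\bar\psi_\ell$). One small slip: for the falling and rising factorial bases the consecutive step factors differ by a shift ($\mu_z'=\mu_z+1$, e.g.\ $\mu_z=z-k+1$ versus $\mu_z'=z-k+2$), so the relations do \emph{not} collapse to $q_{k-2}=\mu_z+\bar\mu_{z}$ and $p_{k-2}r_{k-3}=\mu_z\bar\mu_{z}$ as you state parenthetically; rather one must use the twin relations $q_{k-2}=\mu_z+\bar\mu_{z}'=\bar\mu_{z}+\mu_z'$ and $p_{k-2}r_{k-3}=\mu_z'\bar\mu_{z}'$ that you correctly formulated in general, and these do check out against the table in Proposition~\ref{prop:dualmatrix}.
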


\begin{proof}
  It is well known that the determinant $D_n$ of an $n\times n$
  tridiagonal matrix $A=(a_{ij})$ satisfies the three-term recursion
  relation $D_n=a_{nn}D_{n-1}-a_{n,n-1}a_{n-1,n}D_{n-2}$. Solving this
  recursion for the matrix from Lemma~\ref{lem:det} with the values
  from Proposition~\ref{prop:dualmatrix} and the boundary conditions
  $D_{j,j+1}=1$ and $D_{j,j+2}=-q_j$ yields the stated expressions.
\end{proof}

\subsection{The real case}

Up to now, we have only considered complex, non-real roots of $f$. The case of
real roots is much simpler, and the machinery used for complex roots
specializes in a straightforward way to the real case.  If we regard
both $f$ and the $b_i$ as polynomials in one real variable, the matrix
$W=W^\RR$ reduces to the single row $W^\RR=(b_0,\dots,b_d)$.

\begin{proposition} \label{prop:realkernel}
A basis for the kernel of $W^\RR$ is given by the columns of the matrix
\[
   \oW^\RR
   \ = \
   \begin{pmatrix}
     -b_1 & 0 & 0 & \dots & 0 \\
     b_0 & -b_2 & 0 & \dots & 0 \\
      0 & b_1 & \ddots &  & \vdots \\
      \vdots & 0   & \ddots & \ddots & 0 \\
      \vdots & \vdots & & \ddots & -b_d\\
      0 & \dots & & 0 & b_{d-1}
   \end{pmatrix}
\]
of size $(d+1)\times d$. The determinant of the matrix obtained by
deleting row~$j$ from~$W$ is
\[
  \det\oW_j
  \ = \
  (-1)^j b_j \Pi 
  \qquad \text{for } j=0,1,\dots, d,
\]
with $\Pi=b_1b_2\cdots b_{d-1}$.\hfill$\Box$
\end{proposition}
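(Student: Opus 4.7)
The plan is to verify directly that the displayed matrix $\oW^\RR$ is annihilated by $W^\RR$, and then to compute $\det\oW_j^\RR$ by exploiting its bidiagonal support pattern.

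For the first step, note that column $k$ of $\oW^\RR$ (for $k=1,\dots,d$) has only two nonzero entries: $-b_k$ in row $k-1$ and $b_{k-1}$ in row $k$. Its inner product with the single row $W^\RR=(b_0,\dots,b_d)$ is therefore $b_{k-1}(-b_k)+b_k\,b_{k-1}=0$, so every column of $\oW^\RR$ lies in $\ker W^\RR$. That the columns are in fact a basis will drop out of the determinantal formula below: it will exhibit a non-identically-vanishing $\det\oW_j^\RR$, so $\oW^\RR$ has generic rank $d$, matching $\dim\ker W^\RR=d$ at every point where $W^\RR\ne 0$.

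To compute $\det\oW_j^\RR$, the key observation is that removing row $j$ disconnects the bidiagonal pattern into two independent triangular blocks. After the deletion, column $j$ retains only its entry $-b_j$ in row $j-1$, and column $j+1$ retains only its entry $b_j$ in row $j+1$; consequently, columns $1,\dots,j$ are supported entirely on the surviving rows $0,\dots,j-1$, while columns $j+1,\dots,d$ are supported on rows $j+1,\dots,d$. Thus $\oW_j^\RR$ is block-diagonal with a $j\times j$ lower-bidiagonal upper block $U$ whose main diagonal is $(-b_1,\dots,-b_j)$, and a $(d-j)\times(d-j)$ upper-bidiagonal lower block $L$ whose main diagonal is $(b_j,b_{j+1},\dots,b_{d-1})$.

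It then follows that $\det U=(-1)^j b_1\cdots b_j$ and $\det L=b_j b_{j+1}\cdots b_{d-1}$. The entry $b_j$ appears as the last factor of the first product and the first factor of the second, so multiplying gives $(-1)^j b_j\cdot(b_1\cdots b_{d-1})=(-1)^j b_j\Pi$, as claimed. The extremal cases $j=0$ and $j=d$ correspond to one of the two blocks being the empty $0\times0$ matrix, which by convention has determinant $1$, and fit the same formula. I do not anticipate any serious obstacle beyond careful bookkeeping on the indexing when reading off the diagonals of $U$ and $L$.
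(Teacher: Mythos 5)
Your proof is correct, and it is exactly the direct verification the paper leaves implicit (the proposition is stated with no proof): the two-term cancellation $b_{k-1}(-b_k)+b_kb_{k-1}=0$ shows the columns lie in the kernel, and deleting row $j$ does split $\oW^\RR$ into a lower-bidiagonal block with diagonal $(-b_1,\dots,-b_j)$ and an upper-bidiagonal block with diagonal $(b_j,\dots,b_{d-1})$, giving $\det\oW_j=(-1)^jb_j\Pi$ with no reordering sign. The only caveat, consistent with the paper's own conventions for Gale duals with polynomial entries, is that "basis of the kernel" is meant generically (over $\RR(x)$, or at points where $\Pi\ne0$), which your non-vanishing-minor argument correctly establishes.
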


\section{Bounding the location of roots}\label{sec:bounds}

We first treat the case of complex, non-real roots.  For each ordered
triple of indices $i,j,k$ with $0\le i<j<k\le d$, denote by
$H_{i,j,k}$ the hyperplane in $\RR^{d-1}$ spanned by the rows of the
matrix $\oW_{(i,j,k)}$, obtained by deleting the rows
$\ow_i,\ow_j,\ow_k$ from $\oW$.

\begin{definition}
  $\cS_{i,j,k}$ is the set of all $z=x+iy\in\CC$ such that
  $H_{i,j,k}=H_{i,j,k}(x,y)$ induces a nonnegative cocircuit, i.e.,
  the vectors $\ow_i=\ow_i(x,y),\ow_j=\ow_j(x,y),\ow_k=\ow_k(x,y)$ all
  (weakly) lie on the same side of~$H_{i,j,k}(x,y)$.
\end{definition}

The sets $\cS_{i,j,k}$ are crucial for our purposes for the following
reason: If $z\in\cS_{i,j,k}$, then the corresponding Gale primal
vectors $w_i,w_j,w_k$ form a nonnegative circuit, and thus yield a
nonnegative combination of all $w$'s that sums to zero; in other
words, there exists some polynomial~$f$ with nonnegative coefficients
in the chosen basis $B$ that has a zero at~$z$.  On the other hand, if
$z\notin\cS_{i,j,k}$, we can only conclude that the three particular
Gale primal vectors $w_i,w_j,w_k$ do \emph{not} form a circuit, and so
are not responsible for the possible zero $z$ of $f$.

\begin{proposition}\label{prop:signs}
  For $0\le i<j<k\le d$ and $z\in\CC\smallsetminus\RR$, let
  $\sigma_{i,j,k}(z)$ be the set of signs
  \begin{eqnarray*}
    &&
    \left\{(-1)^i \sign\det\oW_{(j,k)}(x,y),\ 
      (-1)^{j-1} \sign\det\oW_{(i,k)}(x,y),\ 
      (-1)^{k-2} \sign\det\oW_{(i,j)}(x,y)
    \right\}    
    \\
    &=&
    \left\{
      (-1)^i \sign D_{j,k}(x,y),\ 
      (-1)^{j-1} \sign D_{i,k}(x,y),\ 
      (-1)^{k-2} \sign D_{i,j}(x,y)
    \right\}    .
  \end{eqnarray*}
  Then each $\cS_{i,j,k}\subset\RR^2$ is a
  semialgebraic set defined as the locus of all $(x,y)$ such that
  \[
    \{\pm1\} \ \not\subset\ 
    \sigma_{i,j,k}(x+iy).
  \]
\end{proposition}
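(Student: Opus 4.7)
The plan is to make the cocircuit induced by $H_{i,j,k}$ explicit in terms of determinants of $(d-1)\times(d-1)$ submatrices of $\oW$, and then read off the sign condition. Since $H_{i,j,k}$ is defined by a linear form $g\in(\RR^{d-1})^*$ that vanishes on the $d-2$ rows of $\oW_{(i,j,k)}$, the form $g$ is (up to a scalar) the unique generator of the right kernel of the $(d-2)\times(d-1)$ matrix $\oW_{(i,j,k)}$; its cocircuit values are the three numbers $g(\ow_l)$, $l\in\{i,j,k\}$. To evaluate these I would, for each $l$, consider the square matrix $M_l$ obtained by prepending $\ow_l$ to $\oW_{(i,j,k)}$: writing vectors in a basis of $H_{i,j,k}$ extended by one transverse vector, $M_l$ becomes block upper triangular, and one finds $\det M_l = K\cdot g(\ow_l)$ with a nonzero scalar $K$ that does not depend on $l$. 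Hence the sign pattern of the cocircuit is exactly the sign pattern of $(\det M_i,\det M_j,\det M_k)$ up to a global sign flip.

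Next, I would reconcile $\det M_l$ with $\det\oW_{(j',k')}$ by tracking row permutations. Each $M_l$ is obtained from the corresponding $\oW_{(\cdot,\cdot)}$ by cycling the row $\ow_l$ to the top: the row $\ow_i$ sits in position $i$ of $\oW_{(j,k)}$, whereas $\ow_j$ sits in position $j-1$ of $\oW_{(i,k)}$ (because the preceding $\ow_i$ has already been removed), and $\ow_k$ in position $k-2$ of $\oW_{(i,j)}$. These cyclic shifts contribute precisely the signs $(-1)^i,(-1)^{j-1},(-1)^{k-2}$ claimed in the proposition, establishing the first equality in the description of $\sigma_{i,j,k}(z)$. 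The second equality then follows from Lemma~\ref{lem:det}: it gives $\det\oW_{(j',k')}=(p_0\cdots p_{j'-1})\,D_{j',k'}\,(r_{k'-1}\cdots r_{d-2})$, and inspection of the $p_k,r_k$ columns in Proposition~\ref{prop:dualmatrix} shows that in each of the four bases every $p_k,r_k$ is either $1$ or of the form $(x-\alpha)^2+y^2$. For $z\in\CC\smallsetminus\RR$ we have $y\ne 0$, so the prefactors are strictly positive and do not affect signs.

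Finally, $\ow_i,\ow_j,\ow_k$ lying weakly on the same side of $H_{i,j,k}$ is exactly the statement that $g(\ow_i),g(\ow_j),g(\ow_k)$ share a common weak sign, i.e., $\{\pm 1\}\not\subset\sigma_{i,j,k}(z)$. Since the $D_{j',k'}$ are real polynomials in $(x,y)$, this is a semialgebraic condition, which is what we want. I expect the delicate step to be the sign bookkeeping in the row-permutation argument: an off-by-one mistake (for instance, forgetting that deleting $\ow_i$ shifts $\ow_j$'s index in $\oW_{(i,k)}$) would silently flip one of $(-1)^i,(-1)^{j-1},(-1)^{k-2}$ and corrupt the comparison, so this small combinatorial accounting will need to be carried out with care.
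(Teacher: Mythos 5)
Your proposal is correct and follows essentially the same route as the paper: the paper also realizes the cocircuit values as signed maximal minors by adjoining a row to $\oW_{(i,j,k)}$ and expanding the determinant, obtains the signs $(-1)^i,(-1)^{j-1},(-1)^{k-2}$ from exactly the row-permutation bookkeeping you describe, and then invokes Lemma~\ref{lem:det} together with the positivity of the $p_k$'s and $r_k$'s off the real axis to pass from $\det\oW_{(j,k)}$ to $D_{j,k}$. Your sign accounting (including the index shifts caused by the prior deletions) matches the paper's.
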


\begin{proof}
  We obtain a linear form $\varphi_{i,j,k}$ on $\RR^{d-1}$ whose
  vanishing locus is the hyperplane $H_{i,j,k}$ by adding a first row
  of variables $x_1,\dots,x_{d-1}$ to $\oW_{(i,j,k)}$ and expanding
  the determinant of that square matrix along the first row. The value
  of $\varphi_{i,j,k}$ on $\ow_i$, say, is given by the sign~$(-1)^i$
  of the permutation that interchanges rows $0$~and~$i$ in the matrix
  $\oW_{(j,k)}$, times $\det \oW_{(j,k)}$.  For $H_{i,j,k}$ to define
  a (positive or negative) cocircuit, the signs obtained in this way
  for $\ow_i$, $\ow_j$~and~$\ow_k$ must agree. Finally, by
  Lemma~\ref{lem:det} the signs of $\det\oW_{(j,k)}$ and $D_{j,k}$
  agree except perhaps on the real axis (on the vanishing locus of the
  $p_k$'s and $r_k$'s), and we may assume that $j-i\ge2$ and
  $k-j\ge2$.
\end{proof}

  In summary:

\begin{theorem}\label{thm:roots1}
  Let $f$ be a polynomial of degree $d$ with nonnegative coefficients
  with respect to some basis of the vector space $P_d$.  Then the set
  of non-real roots of $f$ is contained in the union of the
  semialgebraic sets $\cS_{i,j,k}$, for $0\le i<j<k\le d$. Put
  differently, if
  \[
     \{-1,1\}\ \subseteq\ 
     \big\{(-1)^i D_{j,k}(z_0),\
     (-1)^{j+1}D_{i,k}(z_0),\
     (-1)^{k}D_{i,j}(z_0)\big\}
  \]
  for each triple $(i,j,k)$ with $0\le i<j<k\le d$, then $z_0$ is not
  a root of $f$. \hfill $\Box$
\end{theorem}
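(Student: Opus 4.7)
The plan is to combine the Gale duality framework of Section~\ref{sec:gale} with a short circuit-theoretic reduction in~$\RR^2$, and then to invoke Proposition~\ref{prop:signs}. Let $z_0=x_0+iy_0$ with $y_0\ne 0$ be a root of $f=\sum_{m=0}^d a_m b_m$, where the $a_m\ge 0$ are not all zero. Separating real and imaginary parts of $f(z_0)=0$ yields the nonnegative planar dependence $\sum_{m=0}^d a_m w_m(z_0)=0$ in~$\RR^2$. Extracting a support-minimal nonnegative subdependence --- that is, a nonnegative circuit of the configuration $W(z_0)$, which has rank at most~$2$ and hence circuits of support at most~$3$ --- and padding with a redundant zero coefficient on a fresh index if the support has size $\le 2$, I obtain indices $0\le i<j<k\le d$ and nonnegative scalars $\alpha_i,\alpha_j,\alpha_k$ (not all zero) with $\alpha_i w_i(z_0)+\alpha_j w_j(z_0)+\alpha_k w_k(z_0)=0$.

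I then translate this into the Gale dual. The identity $W\oW=0$ built into Proposition~\ref{prop:dualmatrix} encodes the primal-dual exchange of oriented matroids: the nonnegative primal circuit on $\{i,j,k\}$ is paired with a nonnegative cocircuit of $\oW(z_0)$ on the same support, witnessed by the linear form $\varphi_{i,j,k}$ on $\RR^{d-1}$ whose kernel is the hyperplane $H_{i,j,k}(z_0)$ spanned by the $d-2$ rows of $\oW_{(i,j,k)}(z_0)$. Its values on $\ow_i(z_0),\ow_j(z_0),\ow_k(z_0)$ are computed in the proof of Proposition~\ref{prop:signs} to equal $(-1)^i\det\oW_{(j,k)}(z_0)$, $(-1)^{j-1}\det\oW_{(i,k)}(z_0)$, $(-1)^{k-2}\det\oW_{(i,j)}(z_0)$ up to a common overall sign. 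Forcing these three values to share a (weak) sign is precisely the statement that $\ow_i(z_0),\ow_j(z_0),\ow_k(z_0)$ weakly lie in one closed half-space bounded by $H_{i,j,k}(z_0)$, i.e.\ $z_0\in\cS_{i,j,k}$.

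Applying Proposition~\ref{prop:signs} then rewrites $z_0\in\cS_{i,j,k}$ as $\{\pm 1\}\not\subset\sigma_{i,j,k}(z_0)$, after using Lemma~\ref{lem:det} to pass from the minors $\det\oW_{(\bullet,\bullet)}(z_0)$ to the polynomials $D_{\bullet,\bullet}(z_0)$ (the diagonal factors $p_\bullet,r_\bullet$ are nonzero off the real axis and therefore cannot change signs). This is exactly the negation of the displayed sign condition in the theorem, so whenever that condition holds for every triple $(i,j,k)$ no triple can certify $z_0$ as a root, and $z_0$ is not a root of any such~$f$.

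The one piece needing care is the degenerate support-$2$ case where $w_i(z_0)$ and $w_j(z_0)$ are anti-parallel: then $\det\oW_{(i,j)}(z_0)=0$ and the corresponding entry of $\sigma_{i,j,k}(z_0)$ vanishes for every~$k$. This is exactly what the ``not $\{\pm 1\}$'' formulation is designed to absorb --- the support-$2$ cocircuit form vanishes on every $\ow_m$ with $m\notin\{i,j\}$, so $\ow_k(z_0)$ automatically lies on $H_{i,j,k}(z_0)$ and $z_0\in\cS_{i,j,k}$ holds for every admissible~$k$. I expect the main (and essentially only) obstacle to be the routine sign bookkeeping aligning the permutation signs $(-1)^i,(-1)^{j-1},(-1)^{k-2}$ from Proposition~\ref{prop:signs} with the $(-1)^i,(-1)^{j+1},(-1)^k$ in the theorem statement --- which agree modulo~$2$.
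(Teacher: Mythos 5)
Your proposal is correct and follows essentially the same route as the paper, which states Theorem~\ref{thm:roots1} as a direct summary of the Gale-duality discussion of Section~\ref{sec:gale} together with Proposition~\ref{prop:signs} (circuit of $W(z_0)$ $\leftrightarrow$ cocircuit of $\oW(z_0)$, then the sign computation via Lemma~\ref{lem:det}). Your explicit treatment of the support-$2$ degenerate case is a welcome elaboration of a point the paper leaves implicit, but it is not a different argument.
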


After a short discussion of the real case, we will apply this result
to our four representative bases. We only discuss the power basis and
binomial coefficient basis in any detail, as the procedure for the
rising and falling factorial bases is almost exactly the same.

\subsection{The real case}

A real number $x\in\RR$ is a real root of some polynomial $f$ with
nonnegative coefficients with respect to a fixed basis if and only if
either $x$~is a root of some basis polynomial, or two basis
polynomials differ in sign when evaluated at~$x$. In other words:

\begin{observation}\label{obs:realroots}
  Let $f$ be a polynomial of degree $d$ with nonnegative coefficients
  with respect to some basis $\{b_0,\dots,b_d\}$ of $P_d$. Then the
  locus of possible real roots of~$f$ is the set of  $x\in\RR$ for which
  $b_i(x)b_j(x)\le0$ for some $i\ne j$.\hfill $\Box$
\end{observation}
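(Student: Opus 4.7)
The statement is an ``if and only if'': a real number $x$ belongs to the claimed locus exactly when there exist indices $i\ne j$ with $b_i(x)\,b_j(x)\le 0$. The plan is to prove both directions by elementary sign analysis. In the real case the ambient space is $\RR^1$, so the Gale duality machinery of Sections~\ref{sec:gale} and~\ref{sec:bounds} collapses into little more than comparison of signs of real numbers, and I would bypass the dual formulation from Proposition~\ref{prop:realkernel} and argue directly on the primal side.

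For the forward direction I would argue by contrapositive. Assume $b_i(x)\,b_j(x) > 0$ for all $i\ne j$. Then no $b_i(x)$ vanishes (otherwise any product with another index would be~$0$), and all the $b_i(x)$ share a common strict sign. Any nontrivial nonnegative combination $f(x)=\sum_{i=0}^d a_i\, b_i(x)$ is then strictly of that sign, hence nonzero, so $x$ cannot be a real root of such an~$f$.

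For the backward direction I would construct a witness~$f$ explicitly. Pick $i\ne j$ with $b_i(x)\,b_j(x)\le 0$. If one of the two, say $b_i(x)$, already vanishes, then $f=b_i$ suffices. Otherwise $b_i(x)$ and $b_j(x)$ have strictly opposite signs, and the two-term combination $f = |b_j(x)|\,b_i + |b_i(x)|\,b_j$ has nonnegative coefficients in~$B$ and satisfies $f(x)=0$ by direct substitution.

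There is no real main obstacle here; the only minor point worth noting is that the construction in the backward direction produces a polynomial of degree at most~$d$ rather than necessarily of degree exactly~$d$, but this matches the convention elsewhere in the paper of working inside the vector space $P_d$ of polynomials of degree at most~$d$, so no further adjustment is required.
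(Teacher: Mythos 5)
Your proof is correct, but it takes a different route from the one the paper actually writes down. The paper's displayed proof deliberately runs the real case through the Gale-dual machinery: it forms the matrix $\oW^\RR$ of Proposition~\ref{prop:realkernel}, builds the linear form $\varphi_{i,j}$ by expanding a bordered determinant, and reads off $\varphi_{i,j}(\ow_i)=b_i\Pi$ and $\varphi_{i,j}(\ow_j)=-b_j\Pi$, so that the cocircuit condition becomes precisely ``$b_i(x)$ and $b_j(x)$ differ in sign.'' You instead argue entirely on the primal side: the contrapositive for one direction (if all pairwise products are positive, every $b_i(x)$ shares a strict sign and no nontrivial nonnegative combination can vanish) and an explicit two-term witness $|b_j(x)|\,b_i+|b_i(x)|\,b_j$ for the other. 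Your version is essentially the elementary argument the paper itself sketches in the sentence immediately preceding the Observation; the author states there that the Gale-transform rederivation is included only ``for the sake of completeness, and in response to the query of one of the referees,'' to illustrate that the framework specializes consistently to $\RR^1$. So your approach buys directness and self-containment, while the paper's buys uniformity with the complex case (Proposition~\ref{prop:signs}). Your remark about the witness possibly having degree less than $d$ is fair; the paper's proof has the same (harmless) looseness, since throughout the paper one really works in the vector space $P_d$ of polynomials of degree at most $d$.
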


For the sake of completeness, and in response to the query of one of
the referees, we briefly rederive this result using our framework of
Gale transforms.

\begin{proof}
  In complete analogy to the complex case, denote for $0\le i<j\le d$
  by $\oW^\RR_{(i,j)}$ the matrix obtained by deleting rows
  $w_i$~and~$w_j$ from~$\oW^\RR$, by $H_{i,j}$ the hyperplane in
  $\RR^d$ spanned by the rows of $\oW^\RR_{(i,j)}$, and by
  $\cS_{i,j}\subseteq\RR$ the set of all $x\in\RR$ such that
  $H_{i,j}(x)$ induces a positive cocircuit, i.e., the vectors
  $\ow_i=\ow_i(x)$ and $\ow_j=\ow_j(x)$ lie on the same side of
  $H_{i,j}$.

  To find these cocircuits explicitly, build a linear form
  $\varphi_{i,j}$ on $\RR^d$ that defines~$H_{i,j}$ by adding a first
  row $(x_1,\dots,x_d)$ of variables to~$\oW^\RR_{i,j}$ and expanding
  the determinant of that square matrix along the first row.  Just as
  in the proof of Proposition~\ref{prop:signs}, $\varphi_{i,j}(\ow_i)
  = (-1)^i \det \oW^\RR_i=b_i\Pi$ and $\varphi_{i,j}(\ow_j) =
  (-1)^{j-1}\det \oW^\RR_j=-b_j\Pi$ by
  Proposition~\ref{prop:realkernel}. In consequence, $\cS_{i,j}$~is
  the locus of points $x\in\RR$ such that $b_i(x)$ and $b_j(x)$ differ
  in sign. This finishes the proof.
\end{proof}

\subsection{The power basis}\label{subsec:power}

For $b_i=z^i$, we set $n=k-j-1$, write $D_n$ for $D_{j,k}$, and
substitute $z=re^{i\theta}$ into $D_n$:
\[
   D_n \ = \ 
   (-1)^{n}\frac{z^{n+1}-\bz^{n+1}}{z-\bz} \ = \
   (-1)^{n}r^{n}e^{-in\theta}\,
   \frac{e^{2i(n+1)\theta}-1}{e^{2i\theta}-1}.
\]
This vanishes iff $\theta=\pi l/(n+1)$ for integer $l$ with $1\le l\le
2n+1$ and $l\ne n+1$. The zero locus of $D_n$ thus consists of
$n$~lines through the origin, the ones closest to the $x$-axis having
angles $\theta=\pm \frac{\pi}{n+1}$. We conclude that $D_n$~has the same
sign throughout the entire open sector $Z_{n+1}=\{z\in\CC:-\frac{\pi}{n+1}<\arg
z<\frac{\pi}{n+1}\}$.  By substituting a positive, real value of $z$ into
$D_n=(-1)^{n}\sum_{j=0}^{n}z^j\bz^{n-j}$, we determine this sign
to be~$(-1)^{n}=(-1)^{k-j-1}$.

For $z\in Z_d$ and $0\le i<j<k\le d$ the set of signs of the
polynomials in Proposition~\ref{prop:signs} is
\[
   \sigma_{i,j,k}(z) \ = \
   \big\{
   (-1)^{i+j+k+1},\ 
   (-1)^{i+j+k},\ 
   (-1)^{i+j+k-1} \big\}
  \ = \ \{\pm1\}. 
\]
This implies $\cS_{i,j,k}=\emptyset$, and thus
Theorem~\ref{thm:roots1} recovers the classical result that a
polynomial of degree~$d$ with positive coefficients in the power basis
has no zeros in~$Z_d$; of course, this includes the case of real
roots.

\subsection{Rising and falling factorial basis} 

In both cases, the polynomials $f_{j,k}$ from Proposition~\ref{prop:D}
have the form $f_{j,k}(z)=\prod_{i=1}^{n+1} (z-a_i)$, with $n=k-j-1$
and $a_i=j-1+i$ for the falling powers and $a_i=-(j-1+i)$ in the case
of the rising powers. The transform $z\mapsto z\pm
\frac{j+k-1}{2}$ remedies this asymmetry, where we choose the
`$-$'~sign for $b_i=z^{\underline{i}}$ and the `$+$'~sign for
$b_i=z^{\overline{i}}$. The $a_i$ then become
integers or half-integers in the range $\pm\frac{n}{2}$.

Using the same type of analysis as will be detailed in
Section~\ref{sec:binomial} for the binomial coefficient basis, one can
prove that the zero locus $\DD_{j,k}=\{z\in\CC:D_{j,k}(z)=0\}$ is
smooth everywhere, that one component intersects the real axis between
each pair of adjacent $a_i$'s, and that far away from the origin
$\DD_{j,k}$ approaches the arrangement of lines through the origin
with slopes $\pm\frac{1}{n+1}, \dots, \pm\frac{n}{n+1}$;
cf.~Figure~\ref{fig:rising+falling}. We will not enter into the
details here, but instead treat the remaining basis in a separate
section.

\begin{figure}[htbp]
  \centering
  \includegraphics[width=.4\linewidth]{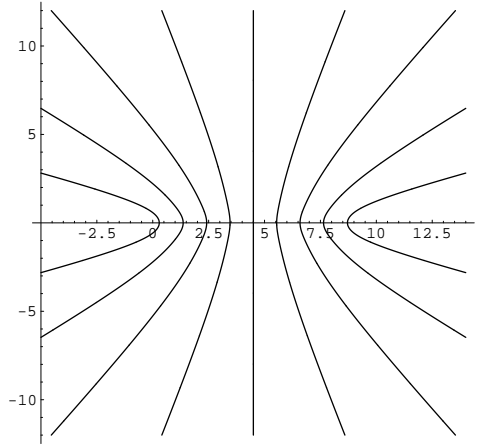}
  \caption{The locus $\DD_{0,10}$ in the case of the rising 
  factorial basis. }
  \label{fig:rising+falling}
\end{figure}

\section{The binomial coefficient basis}\label{sec:binomial}

We first get the real case out of the way: The basis polynomials all
have the same sign outside the closed interval $[-d,d-1]$, and at each
point inside this interval there are two basis polynomials that
evaluate to opposite signs. By Observation~\ref{obs:realroots},
$[-d,d-1]$ is exactly the set of possible real roots.

For the non-real roots, as before we pass to an adapted coordinate
system with respect to which the vanishing locus of $D_{j,k}$ is
centro-symmetric, by replacing
\begin{equation}
  \label{eq:coo-change}
  z
  \ \mapsto \
  z'+(k+j-d-1)/2 
\end{equation}
in $d\cdot f_{j,k}(z)$. Writing again $z$ for $z'$ yields
\[
    df_{j,k}(z) \ = \
    \prod_{i=j}^{k-1}\left(z+\frac{k+j-d-1-2i}{2}\right)
    \prod_{i=j}^{k-1}\left(\bz+\frac{k+j+d-1-2i}{2}\right).
\]
Next, we replace $i$ by $i+j$ in the first product and by $k-1-i$ in the
second, to obtain
\begin{eqnarray*}
    df_{j,k}(z) & = &
    \prod_{i=1}^{k-j}\left(z-i-\frac{\Delta}{2}\right)
    \left(\bz +i + \frac{\Delta}{2}\right),
\end{eqnarray*}
where $\Delta=d-1-k+j$. Introducing $a_i=i+\frac{\Delta}{2}$ we obtain
\begin{equation}\label{eq:symDjk}
   D_{j,k}(z) \ = \ D_{n}(z) \ = \ 
   \frac{(-1)^{n+1}}{d(\bz-z)}   \left(
     \prod_{i=1}^{n+1} (z-a_i)(\bz+a_i) - 
     \prod_{i=1}^{n+1} (\bz-a_i)(z+a_i) 
   \right),
\end{equation}
where we have set $n=k-j-1$ (so that $\Delta=d-2-n$), in accordance
with the fact that the degree of $D_{j,k}(z)$ in $z$ is $n$.

\medskip

Before examining the zero locus of $D_{j,k}(z)$, we pause to calculate
the leading coefficient. This result will be used in Section~\ref{sec:ehrhart}.

\begin{lemma}\label{lem:leading}
  The leading coefficient of $D_{j,k}(z)$ is 
  \begin{equation}
    \label{eq:leading}
    [2n] D_{j,k}(z) \ = \  (z\bz)^{k-j-1} (-1)^{k-j-1} (k-j) \ = \
    r^{2n} (-1)^n (n+1) ,
  \end{equation}
  where $z=re^{i\phi}$. It is invariant under
  substitutions of the form $z\mapsto z+z_0$, and the sign of
  $D_n(z)$ outside the outermost component of $\DD_n$ is $(-1)^n$, and
  $+1$ inside the innermost~one.
\end{lemma}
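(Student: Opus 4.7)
My plan is to verify the three assertions of the lemma in turn: the explicit leading coefficient, translation invariance, and the two sign statements.

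For the leading coefficient, I would extract the top-total-degree part of the formula \eqref{eq:symDjk} by passing to the auxiliary variables $S=z\bz$ and $T=z-\bz$. A direct expansion gives $(z-a_i)(\bz+a_i)=(S-a_i^2)+a_iT$ and $(\bz-a_i)(z+a_i)=(S-a_i^2)-a_iT$, so the bracketed difference in~\eqref{eq:symDjk} is an odd polynomial in~$T$ whose coefficient of $T^1$ equals $2\sum_{j=1}^{n+1}a_j\prod_{i\ne j}(S-a_i^2)$. Dividing by the prefactor $\bz-z=-T$ yields
\[
D_n(z) \ = \ \frac{2(-1)^n}{d}\sum_{j=1}^{n+1}a_j\prod_{i\ne j}(S-a_i^2) \ + \ T^2\cdot(\text{lower-degree terms}).
\]
Since $S$ has bidegree $2$ and $T$ has bidegree $1$ in $(x,y)$, the top-total-degree part of $D_n$ comes entirely from the leading $S^n$ piece of the first summand, namely $\tfrac{2(-1)^n}{d}(z\bz)^n\sum_{j=1}^{n+1}a_j$. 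A short calculation using $a_j=j+\Delta/2$ and $\Delta=d-n-2$ gives $\sum a_j=\tfrac{(n+1)d}{2}$, so the top part collapses to $(-1)^n(n+1)(z\bz)^n=(-1)^n(n+1)r^{2n}$, as required.

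Translation invariance is then immediate: any substitution $z\mapsto z+z_0$, $\bz\mapsto\bz+\bz_0$ preserves the top-total-degree homogeneous component of a polynomial in $z,\bz$, since all lower-degree corrections produced by such a shift live in strictly lower total degrees.

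For the two sign statements I would argue by asymptotic dominance and a central evaluation. Outside the outermost oval of $\DD_n$ lies the unique unbounded region of $\CC\setminus\DD_n$, on which $D_n$ has constant nonzero sign; asymptotically this sign is dictated by the leading term $(-1)^n(n+1)r^{2n}$ and is therefore $(-1)^n$. For the innermost oval, I would evaluate at $z=0$: there $S=T=0$, so the displayed expansion reduces to $D_n(0)=\tfrac{2}{d}\sum_{j=1}^{n+1}a_j\prod_{i\ne j}a_i^2>0$, since each $a_j=j+\Delta/2$ is positive. The centro-symmetry $D_n(-z)=D_n(z)$ built into the coordinate choice then forces the innermost oval to be the centrally symmetric component containing the origin, so the sign in its interior is $+1$.

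The main technical obstacle is the leading-coefficient calculation: switching to $(S,T)$ is what reduces the two bulky products to a clean odd function of~$T$, and the identity $\sum a_j=(n+1)d/2$ is what makes the final constant collapse exactly to $n+1$. Once this is in hand, the remaining claims reduce to a single asymptotic-dominance argument and a single central evaluation.
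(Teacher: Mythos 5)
Your argument is correct and follows essentially the same route as the paper: both proofs expand the difference of products in \eqref{eq:symDjk}, identify its lowest-order nonvanishing term as $2(z-\bz)(z\bz)^n\sum_i a_i$, and conclude with $\sum_i a_i=(n+1)d/2$; your passage to the variables $S=z\bz$, $T=z-\bz$ is just a cleaner bookkeeping of the paper's term-by-term cancellation. A small bonus of your version is that it also makes the sign claims explicit (leading-term dominance outside the outermost oval, and the evaluation $D_n(0)=\tfrac{2}{d}\sum_j a_j\prod_{i\ne j}a_i^2>0$ at the origin, which lies inside the innermost oval by Proposition~\ref{prop:ovals}), whereas the paper's appendix proof only carries out the leading-coefficient computation.
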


\begin{proof}
  See the Appendix.
\end{proof}

We now treat the zero locus of $D_n$. First, whenever $D_n(z)=0$,
\[
   \sum_{i=1}^{n+1} \arg(z-a_i) + \sum_{i=1}^{n+1} \arg(\bz+a_i)
   - \sum_{i=1}^{n+1} \arg(\bz-a_i) - \sum_{i=1}^{n+1} \arg(z+a_i)
   \ = \ 2l\pi
\]
for some integer $l$. Because
$\arg(z\pm a_i)=-\arg(\bz\pm a_i)$, this relation reads
\begin{equation}\label{eq:asum}
   \sum_{i=1}^{n+1} \arg(z-a_i) - \sum_{i=1}^{n+1} \arg(z+a_i) 
   \ = \ \sum_{i=1}^{n+1} \alpha_i 
   \ = \  l\pi,
\end{equation}
where $\alpha_i$ is the angle under which the segment $[-a_i,a_i]$
appears as seen from $z$ (cf.~Figure~\ref{fig:angle}). 

\begin{figure}[htbp]
  \centering
  \input{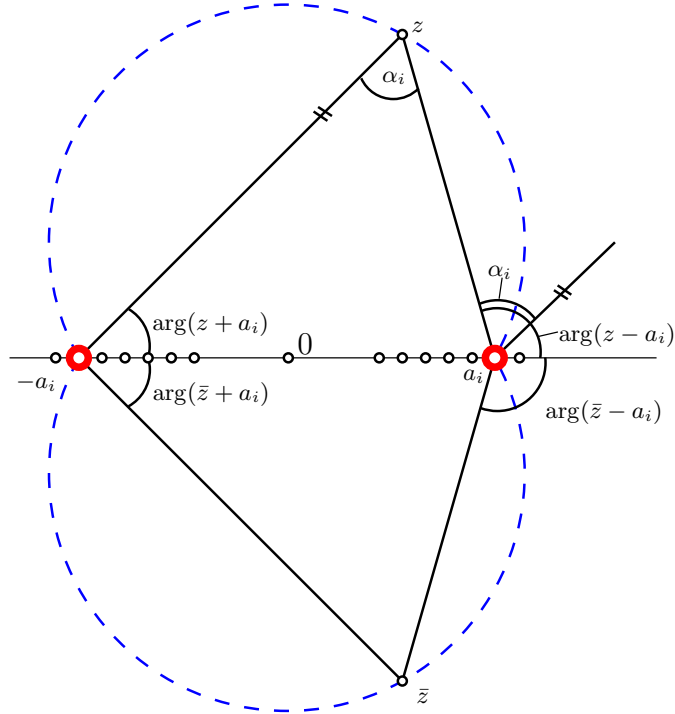}
  \caption{The segments $[-a_i,a_i]$ as seen from $z$}
  \label{fig:angle}
\end{figure}

We may assume without loss of generality that $z$ lies in the upper
half plane, and therefore that $\arg(z-a_i)>\arg(z+a_i)>0$, which
implies $l\ge 1$. On the other hand, the maximal value $(n+1)\pi$
of~\eqref{eq:asum} is achieved for real $z$ between $-a_1$~and~$a_1$,
so that $l\le n$ for non-real~$z$.  From this, we can draw several
conclusions, which we detail in Section~\ref{sec:limiting+global}. The
reader may want to just skim this material, and otherwise skip ahead
to Section~\ref{sec:bounded}, where we apply it to conclude that the
root locus is bounded.

\subsection{Limiting behavior and global geometric properties of $D_{j,k}$}
\label{sec:limiting+global}

\begin{proposition}\label{prop:circles}
  When $d$ becomes large with respect to $n$, the zero locus of
  $D_{j,k}$ approaches a union of circles passing through $\pm d/2$
  and symmetric about the imaginary axis.  For $l=1,\dots,k-j-1$,
  these circles have center
  \[
    z_{l} \ = \ -\frac{d+1-k-j}{2} - i\,\frac{d}{2}\cot\frac{l\pi}{k-j}
  \]
  and radius
  \[
    r_{l} \ = \ \frac{d}{2\sin\frac{l\pi}{k-j}}.
  \]

\end{proposition}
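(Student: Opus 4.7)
The plan is to exploit the angular relation \eqref{eq:asum} together with a rescaling. After the coordinate change \eqref{eq:coo-change}, $D_{j,k}$ is given in a centered variable—call it $z_c$—by formula \eqref{eq:symDjk}, and its non-real zeros are characterized by $\sum_{i=1}^{n+1}\alpha_i(z_c)=l\pi$, where $\alpha_i(z_c)$ is the angle subtended by the segment $[-a_i,a_i]$ at $z_c$ (cf.\ Figure~\ref{fig:angle}). Since $a_i=i+\tfrac{d-2-n}{2}$, all $a_i$ are of order $d/2$ when $n$ is fixed and $d$ grows, which suggests introducing the scaled variable $\zeta=z_c/d$.

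Under this rescaling, the angle subtended by $[-a_i,a_i]$ at $z_c$ equals the angle subtended by $[-a_i/d,\,a_i/d]$ at $\zeta$. As $d\to\infty$ with $n$ fixed, $a_i/d\to 1/2$ uniformly in $i$, so each $\alpha_i$ converges to the common angle $\alpha(\zeta)$ subtended by the segment $[-1/2,1/2]$ at $\zeta$; the convergence is uniform on compact subsets of the upper half $\zeta$-plane avoiding $\zeta=\pm 1/2$. In the limit, \eqref{eq:asum} reads
\[
   (n+1)\,\alpha(\zeta) \ = \ l\pi, \qquad l=1,\dots,n,
\]
equivalently $\alpha(\zeta)=l\pi/(k-j)$.

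By the classical inscribed angle theorem, the locus of $\zeta$ from which the segment $[-1/2,1/2]$ subtends a fixed angle $\theta\in(0,\pi)$ is a pair of circular arcs through $\pm 1/2$ (one in each half-plane), and together these form the full circle of radius $1/(2\sin\theta)$ centered on the imaginary axis at $-\tfrac{i}{2}\cot\theta$. Applying this with $\theta=l\pi/(k-j)$ and letting the sign of $\cot$ vary as $l$ traverses $1,\dots,k-j-1$ automatically produces the complex-conjugate pairing of circles $l\leftrightarrow (k-j)-l$ required by the symmetry $D_{j,k}(z_c)=\overline{D_{j,k}(\bz_c)}$.

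To finish, I would undo the two coordinate changes: $z_c=d\zeta$ scales centers and radii by $d$, and the shift $z=z_c-(d+1-k-j)/2$ from \eqref{eq:coo-change} translates each center horizontally, yielding exactly the stated expressions for $z_l$ and $r_l$. The geometric content is essentially Thales's theorem; the main obstacle is the bookkeeping of the two coordinate changes together with making rigorous the uniform convergence $\alpha_i\to\alpha$ on compact sets bounded away from $\pm 1/2$ (equivalently, from the points $\pm d/2$ in the $z_c$ coordinate).
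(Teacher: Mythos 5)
Your argument is correct and follows essentially the same route as the paper's (two-line) proof: as $d\to\infty$ with $n$ fixed the points $\pm a_i$ fuse to $\pm d/2$, the zero condition \eqref{eq:asum} degenerates to $(n+1)\alpha=l\pi$, and the inscribed-angle theorem identifies the level sets of $\alpha$ as circular arcs through $\pm d/2$ with the stated centers and radii. Your version is more careful than the paper's on two points worth keeping: the uniform convergence $\alpha_i\to\alpha$ under the rescaling $\zeta=z_c/d$, and the observation that the upper arc for $l$ and the lower arc for $k-j-l$ assemble into the single circle of the statement.
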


\begin{proof} 
  For $d$ large with respect to $n$, the points $\pm a_i$
  fuse to $\pm a=\pm\frac{d}{2}$, so that \eqref{eq:asum} reads 
  \[
    \alpha \ := \ \arg(z-a)-\arg(z+a) \ = \ \frac{l\pi}{n+1}.
  \]
  By elementary geometry, the locus of these points is a union of two
  circular arcs with the specified equations.
\end{proof}

\begin{example}
  For $n=1$, we obtain $a_1=\frac{d-1}{2}$ and
  $a_2=\frac{d+1}{2}$. For large~$d$, they approach $a=\frac{d}{2}$
  and equation \eqref{eq:asum} says $\alpha=\frac{\pi}{2}$. In that
  limit, $\DD_1$ thus approaches the circumference with center $0$ and
  radius $\frac{d}{2}$. For smaller values of $d$, directly evaluating
  equation~\eqref{eq:symDjk} yields
  \[
     D_1(z) \ = \ z\bz-a_1a_2,
  \]
  which describes a circumference of center $0$ and radius
  $\sqrt{a_1a_2\mathstrut}=\frac12\sqrt{d^2-1\mathstrut}<\frac{d}{2}$. 
\end{example}

\begin{proposition}\label{prop:smooth}
  The plane algebraic curve $\DD_{j,k}$ with equation $D_{j,k}(z)=0$
  is smooth. The only points where it has horizontal tangent vectors
  lie on the $y$-axis.
\end{proposition}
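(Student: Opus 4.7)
My plan is to analyze $\DD_{j,k}$ separately off and on the real axis, working from the factored form~\eqref{eq:symDjk}. Setting $P(z) = \prod_i(z - a_i)$ and $Q(z) = \prod_i(z + a_i)$, the numerator of~\eqref{eq:symDjk} rewrites as
\[
  P(z)Q(\bz) - P(\bz)Q(z) \;=\; 2i\,|Q(z)|^2\,\Imag f(z), \qquad f := P/Q,
\]
so for $y \neq 0$ the equation $D_n(z) = 0$ is equivalent to $\Imag f(z) = 0$. Since $Q$ has only real zeros, $f$ is holomorphic and nonvanishing on the upper half plane, $\Imag f$ is harmonic with $|\nabla \Imag f|^2 = |f'(z)|^2$, and smoothness off the real axis reduces to showing that $f'$ has no zeros off~$\RR$.

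The key step is the critical-point count for $f$. From $f'(z)/f(z) = \sum_{i=1}^{n+1} 2a_i/(z^2 - a_i^2)$, substituting $w = z^2$ turns $f'(z) = 0$ into $\psi(w) := \sum_i 2a_i/(w - a_i^2) = 0$. Because the residues $2a_i$ are positive and the poles $a_i^2$ distinct, $\psi$ is strictly decreasing on each interval between consecutive poles and runs from $+\infty$ to $-\infty$; the intermediate value theorem gives $n$ simple real zeros, one in each gap $(a_i^2, a_{i+1}^2)$, and these exhaust all zeros by degree. Hence every critical point of $f$ satisfies $z^2 > 0$, forcing $z \in \RR$, so $\DD_{j,k}$ is smooth on $\CC \setminus \RR$. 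I expect this interlacing argument to be the main technical ingredient.

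For smoothness on the real axis, I would expand the numerator of~\eqref{eq:symDjk} in powers of $y$ using $P(x \pm iy) = P(x) \pm iyP'(x) + O(y^2)$ and likewise for $Q$; cancellation against $\bz - z = -2iy$ yields
\[
  D_n(x, 0) \;=\; \tfrac{(-1)^n}{d}\bigl(P'Q - PQ'\bigr)(x) \;=\; \tfrac{2(-1)^n}{d}\sum_{i=1}^{n+1} a_i \prod_{j \neq i}(x^2 - a_j^2),
\]
a polynomial of degree $n$ in $w = x^2$ whose roots are precisely the simple zeros $w_l$ of $\psi$ above (in particular distinct from the $a_i^2$). Thus $\partial_x D_n(x_0, 0) \neq 0$ at each real root, while $\partial_y D_n(x_0, 0) = 0$ since $D_n$ has even degree in $y$ by Proposition~\ref{prop:D}. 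The gradient is therefore nonzero and horizontal on the real axis, which both establishes smoothness there and shows that the tangent at real-axis points is vertical.

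Finally, for the horizontal-tangent claim off the real axis, on $\DD_{j,k}$ I will use $\partial_x D_n \propto \partial_x \Imag f = \Imag f'(z)$; since $f(z) \in \RR$ on the curve,
\[
  \Imag f'(z) \;=\; f(z)\cdot \Imag \sum_i \frac{2a_i}{z^2 - a_i^2} \;=\; -\,4xy\,f(z)\sum_{i=1}^{n+1}\frac{a_i}{|z^2 - a_i^2|^2}.
\]
The sum is strictly positive on the upper half plane and $f(z) \neq 0$ there, so $\Imag f'(z) = 0$ forces $x = 0$; combined with the vertical tangent already established on the real axis, every horizontal-tangent point of $\DD_{j,k}$ must lie on the $y$-axis, completing the proposition.
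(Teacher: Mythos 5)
Your proof is correct, and while it ultimately rests on the same identity as the paper's, it is organized along a genuinely different route. The paper works directly with Wirtinger-type partial derivatives of $g=(h_1-h_2)/(\bz-z)$: one computation shows that on the curve $\partial g/\partial x$ equals $4h_1\sum_i x a_i/\big(((x-a_i)^2+y^2)((x+a_i)^2+y^2)\big)$, which simultaneously rules out real singular points and yields the horizontal-tangent claim, and non-real singular points are excluded by separating real and imaginary parts of $\sum_i a_i/(z_0^2-a_i^2)=0$ in the variable $z_0^2$. You instead factor $D_n$ as a nonvanishing multiple of $\Imag(P/Q)$, reduce smoothness off $\RR$ to the absence of non-real critical points of $f=P/Q$, and settle that by the interlacing/IVT count for $\psi(w)=\sum_i 2a_i/(w-a_i^2)$; note that $\psi(z^2)=0$ is exactly the equation the paper arrives at, so the two arguments differ only in how they exclude non-real solutions (your monotonicity count versus the paper's real--imaginary separation, both hinging on $a_i>0$). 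Your real-axis case (Taylor expansion in $y$ plus evenness of $D_n$ in $y$) is handled separately, whereas the paper absorbs it into the same $\partial g/\partial x$ formula; in exchange, your critical-point count yields for free that $\DD_{j,k}$ meets the real axis in exactly the $2n$ simple points $\pm\sqrt{w_l}$ with $w_l\in(a_l^2,a_{l+1}^2)$, information the paper derives separately in Proposition~\ref{prop:ovals}, and the harmonicity of $\Imag f$ makes transparent why the curve is a union of level sets of the angle sum. All steps check out.
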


This is proved in the Appendix.

\begin{proposition}\label{prop:ovals}
  All algebraic curves $\DD_{j,k}$ consist of $n=k-j-1$ nested ovals.
  The $i$-th~oval intersects the real axis inside the union of open
  intervals $\pm(a_i,a_{i+1})$, for $i=1,\dots,n$.
\end{proposition}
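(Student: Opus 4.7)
The plan is to exploit the angular description in equation~\eqref{eq:asum}: for $z$ in the open upper half plane, $D_n(z)=0$ if and only if
\[
  F(z)\ :=\ \sum_{i=1}^{n+1}\alpha_i(z)\ =\ l\pi
\]
for some $l\in\{1,\dots,n\}$, where $\alpha_i(z)=\Imag\log\tfrac{z-a_i}{z+a_i}\in(0,\pi)$ is the angle subtended at $z$ by the segment $[-a_i,a_i]$.  Since each $\alpha_i$ is harmonic, $F$ is smooth on the open upper half plane and takes values in $(0,(n+1)\pi)$.  A direct computation shows $F(z)\to 0$ as $|z|\to\infty$, while $F(z)\to(n+1-k)\pi$ as $z$ tends from above to a real-axis point in the interval $(a_k,a_{k+1})$ for $k=0,1,\dots,n+1$ (with the convention $a_0=0$, $a_{n+2}=+\infty$), and symmetrically on the negative real axis.

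For each $l\in\{1,\dots,n\}$ this pins down the only possible real-axis boundary points of $F^{-1}(l\pi)$: the open intervals $(a_{n+1-l},a_{n+2-l})$ and $(-a_{n+2-l},-a_{n+1-l})$.  By Proposition~\ref{prop:smooth} the level set $F=l\pi$ is a smooth $1$-manifold in the open upper half plane, and by Lemma~\ref{lem:leading} it is bounded; hence it is a finite disjoint union of smooth arcs with both endpoints on the real axis.  To see that there is only one such arc at each level, I would use two symmetries.  First, a short direct check gives $D_n(-z)=D_n(z)$, so $\DD_n$ is invariant under reflection in the $y$-axis.  Second, on the positive imaginary axis
\[
  F(iy)\ =\ 2\sum_{i=1}^{n+1}\arctan(a_i/y)
\]
is strictly decreasing from $(n+1)\pi$ to $0$ as $y$ runs over $(0,\infty)$, so the positive imaginary axis meets $F=l\pi$ in exactly one point.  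If $F^{-1}(l\pi)$ had two distinct components in the upper half plane, both would have to pass through this unique point, contradicting disjointness.  Therefore each level set is a single simple arc, and $y$-axis symmetry forces its endpoints to be $\pm x_l$ with $x_l\in(a_{n+1-l},a_{n+2-l})$.

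Reflecting each arc across the real axis using $D_n(\bz)=D_n(z)$ closes it into a smooth oval, yielding $n$ ovals in total; relabelling by $i=n+1-l$, the $i$-th oval meets the real axis in $\pm(a_i,a_{i+1})$ as claimed.  Nesting is then automatic, since the super-level sets $\{z:\Imag z>0,\ F(z)>l\pi\}$ are monotonically nested in $l$ and bounded, so the arc for larger $l$ (smaller $i$) lies strictly inside the arc for smaller $l$ (larger $i$).

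The main obstacle is the boundary analysis of $F$, and of $D_n$ itself, along the real axis: the defining expression~\eqref{eq:symDjk} is of the form $0/0$ there, so one must rigorously justify that the continuous extension of $F$ to each open subinterval of $\RR\setminus\{\pm a_1,\dots,\pm a_{n+1}\}$ attains the stated constant value, and that each upper-half-plane arc really does terminate at one of the $2n$ real zeros of $D_n(x)$ obtained from the first-order Taylor expansion of $D_n(x+iy)$ in $y$.  This real-axis bookkeeping is what cleanly ties the angular picture to the polynomial picture.
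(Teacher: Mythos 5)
Your framework is the same as the paper's: read $\DD_n\cap\{y>0\}$ as the union of the level sets $F^{-1}(l\pi)$, $l=1,\dots,n$, of the angle sum $F=\sum_i\alpha_i$, and use boundary values of $F$ on the real axis to locate endpoints. The genuine gap is in the step that forces each level to contribute exactly \emph{one} arc. You correctly show that the positive imaginary axis meets $F^{-1}(l\pi)$ in exactly one point (since $F(iy)=2\sum_i\arctan(a_i/y)$ is strictly decreasing), but then assert that two distinct components ``would both have to pass through this unique point.'' Nothing in your argument compels a component to meet the imaginary axis at all: a priori $F^{-1}(l\pi)$ could contain an arc with both endpoints on the positive real axis, lying entirely in the open right half-plane, together with its mirror image on the left. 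Your boundary analysis does not exclude this, because the limiting value of $F$ from above is \emph{constant}, equal to $l\pi$, on the whole interval $(a_{n+1-l},a_{n+2-l})$ (the angle condition is necessary for $D_n=0$ but the real zeros of the polynomial $D_n$ are isolated), so it does not cut the number of admissible endpoints down to two.

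The repair is exactly what the paper's proof gestures at by taking an \emph{arbitrary} non-real ray $\rho$ through the origin rather than just the imaginary axis: each $\alpha_i$ is strictly decreasing along every ray from the origin into the open upper half-plane, because its level sets are the circular arcs through $\pm a_i$, the origin lies on the chord $[-a_i,a_i]$ and hence inside each such circle, and so the ray exits each circle exactly once, at a point of the upper arc. Consequently $F$ decreases strictly from $(n+1)\pi$ to $0$ along every such ray, so $F^{-1}(l\pi)$ meets every ray exactly once; in polar coordinates it is the graph of a function of the angle, hence connected, it closes up to a single oval after reflection in the real axis, and the nesting of the ovals in $l$ is immediate. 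Your imaginary-axis computation is the special case of this along one direction and, as it stands, does not suffice. The real-axis bookkeeping you flag at the end (matching arc endpoints with the isolated real zeros of $D_n$ in $\pm(a_i,a_{i+1})$) is indeed the remaining issue, and corresponds to the paper's $\varepsilon$--$\delta$ argument for the second statement.
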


\begin{proof} 
  Let $\phi\in S^1\smallsetminus S^0$ be a non-real unit vector and $\rho$
  the ray through the origin and $\phi$. At each point~$p$ of
  $\DD_{j,k}\cap\rho$, the angle sum $\sum_{i=1}^{n+1}\alpha_i$ takes on
  some value $l\pi$ among the discrete set $\{\pi,\dots,n\pi\}$, and
  therefore this value remains constant on the entire connected
  component to which $p$ belongs. The argument extends to the real
  axis by smoothness of~$\DD_{j,k}$. 

  For the second statement, observe that the value of
  $\alpha_j=\arg(z-a_j)-\arg(z+a_j)$ increases by almost~$\pi$ as $z$
  travels from $a_j+\varepsilon+i\delta $ to
  $a_j-\varepsilon+i\delta$, for $0<\delta\ll\varepsilon\ll 1$.
\end{proof}


\begin{example}
  For $D_{0,10}=D_9$ and $d=10$, we obtain the picture of
  Figure~\ref{fig:d10}.
\end{example}

\begin{figure}[htbp]
  \centering
  \includegraphics[width=.25\linewidth]{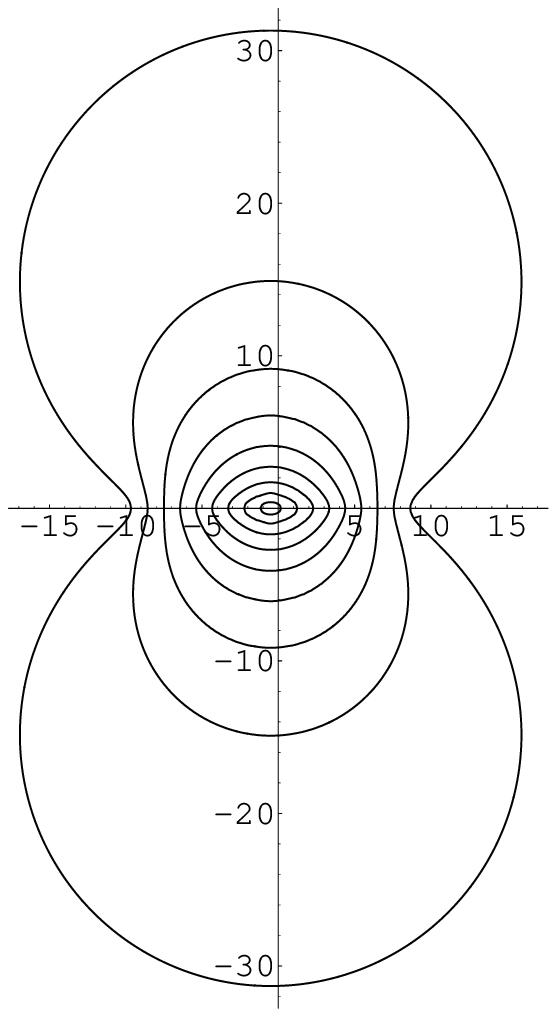} \qquad
  \includegraphics[width=.25\linewidth]{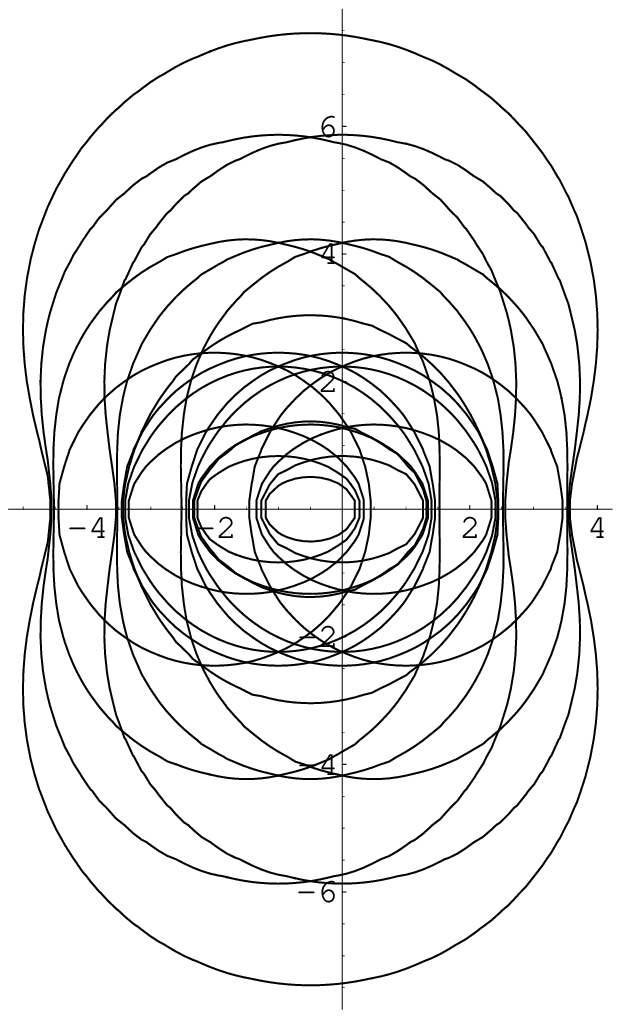}
  
  \caption{\emph{Left:} The curve $\DD_{0,10}$ for $d=10$;
    \emph{Right:} the curves $\DD_{i,j}$ with $0\le i<j\le 5$}
  \label{fig:d10}
\end{figure}

To continue, we introduce some useful notation.  By \eqref{eq:symDjk},
the formula for $D_{j,k}(z)$ involves the points
$a_{j,k;i}=i+\frac12(d-1-k+j)$ for $1\le i\le k-j$, so that
\[
  (a_{j,k;1},\dots,a_{j,k;k-j}) \ = \
  \left(\frac{d}{2}-\frac{k-j-1}{2},\dots,\frac{d}{2}+\frac{k-j-1}{2}\right).
\]
We write $\alpha(\pm a_{j,k;i};z)$ for the angle under which $z\in\CC$
sees the segment $[-a_{j,k;i},a_{j,k;i}]$, and $A(j,k;z) =
\sum_{i=1}^{k-j}\alpha(\pm a_{j,k;i};z)$ for the corresponding angle
sum. Moreover,  let 
\[
   \DD_{j,k;l} \ = \ \big\{(x,y)\in\RR^2:A(j,k;x+iy)= l\pi\big\}
   \qquad\text{for }l=1,\dots,k-j-1 
\]
be the $l$-th oval of $\DD_{j,k}$, and $\cl \DD_{j,k;l}$ the
closure of the region in $\RR^2$ bounded by~$\DD_{j,k;l}$.

\begin{remark}
  The arrangement of ovals $\{\DD_{j,k;l}:0\le j<k\le d, 1\le l\le
  k-j\}$ has several interesting combinatorial properties, which we
  will not pursue in this paper. Here we would only like to point out
  the triple points where components of $\DD_{j,r}$, $\DD_{r,k}$ and
  $\DD_{j,k}$ intersect.
\end{remark}

\begin{proposition}\label{prop:bounded}
  Let $0\le j \le j'<k'\le k \le d$ and $1\le l\le k-j-1$ be integers.

  Then $\DD_{j',k'}\subset\cl\DD_{j,k;1}\smallsetminus\cl\DD_{j,k;k-j-1}$.
  In particular, all components of all curves~$\DD_{j,k}$ are
  contained in the topological closure of
  $\cl\DD_{0,d;1}\smallsetminus\cl\DD_{0,d;d-1}$.  Moreover, for all
  integers $\delta_1,\delta_2$ with $0\le\delta_1\le j$,\
  $0\le\delta_2\le d-k$ and (where appropriate) $1+\delta_1+\delta_2
  \le l \le d-\delta_1-\delta_2$,
    \begin{eqnarray}
       \cl\DD_{j-\delta_1,k+\delta_2;l+\delta_1+\delta_2}
       & \subseteq &
       \cl\DD_{j,k;l}
       \ \subseteq \
       \cl\DD_{j+\delta_1,k-\delta_2;l-\delta_1-\delta_2},
       \label{eq:delta1}
       \\
       \cl\DD_{j+\delta_1,k-\delta_2;l}
       & \subseteq &
       \cl\DD_{j,k;l}
       \ \subseteq \
       \cl\DD_{j-\delta_1,k+\delta_2;l}.
       \label{eq:delta2}
     \end{eqnarray}
     
\end{proposition}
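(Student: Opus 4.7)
The plan is to reduce the entire proposition to a single additivity property of the angle sum $A(j,k;z)$ as a function of the index interval $(j,k)$. Using \eqref{eq:asum} and translating back to the original (un-shifted) variable in which every $\DD_{j,k;l}$ lives as a subset of $\CC$, one finds that $A(j,k;z)=\sum_{i=j}^{k-1}\beta_i(z)$, where $\beta_i(z)$ is the angle subtended at $z$ by the real segment $[i-d,\,i]$, and depends only on $i$ and $d$, not on $(j,k)$. Enlarging the interval $(j,k)$ to $(j-1,k)$ or $(j,k+1)$ will therefore simply append one more term to the sum,
\[
  A(j-1,k;z)-A(j,k;z)\ =\ \beta_{j-1}(z), \qquad A(j,k+1;z)-A(j,k;z)\ =\ \beta_k(z),
\]
and since each $\beta_i(z)$ lies strictly in $(0,\pi)$ for $z$ in the open upper half-plane, iteration will give the monotonicity bound
\[
  0\ <\ A(j-\delta_1,k+\delta_2;z)-A(j,k;z)\ <\ (\delta_1+\delta_2)\pi
\]
for non-real $z$ whenever $\delta_1+\delta_2\ge 1$.

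The four inclusions in \eqref{eq:delta1} and \eqref{eq:delta2} will then follow in a single line each from the characterization $\cl\DD_{j,k;l}=\{z:A(j,k;z)\ge l\pi\}$ together with this bound. For instance, to prove $\cl\DD_{j+\delta_1,k-\delta_2;l}\subseteq\cl\DD_{j,k;l}$, any $z$ with $A(j+\delta_1,k-\delta_2;z)\ge l\pi$ automatically satisfies $A(j,k;z)\ge A(j+\delta_1,k-\delta_2;z)\ge l\pi$; and to prove $\cl\DD_{j-\delta_1,k+\delta_2;l+\delta_1+\delta_2}\subseteq\cl\DD_{j,k;l}$, I will subtract $(\delta_1+\delta_2)\pi$ from the hypothesis and apply $A(j,k;z)\ge A(j-\delta_1,k+\delta_2;z)-(\delta_1+\delta_2)\pi$. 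The remaining two inclusions are analogous.

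For the main statement $\DD_{j',k'}\subset\cl\DD_{j,k;1}\smallsetminus\cl\DD_{j,k;k-j-1}$, I will apply \eqref{eq:delta2} to obtain $\cl\DD_{j',k';l'}\subseteq\cl\DD_{j,k;l'}\subseteq\cl\DD_{j,k;1}$ for every oval $\DD_{j',k';l'}$ (since $l'\ge 1$), and apply \eqref{eq:delta1} with the roles of $(j,k)$ and $(j',k')$ swapped to obtain $\cl\DD_{j,k;k-j-1}\subseteq\cl\DD_{j',k';k'-j'-1}\subseteq\cl\DD_{j',k';l'}$. Strict monotonicity will force these containments to be strict away from the real axis, so $\DD_{j',k';l'}$ misses the interior of $\cl\DD_{j,k;k-j-1}$; the real-axis touching points, where several $\beta_i$'s jump by $\pi$ simultaneously and the strict bound degenerates to equality, are exactly what the \emph{topological closure} in the ``In particular'' clause absorbs. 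The hard part is really just the coordinate-system bookkeeping needed to recognise the additive form $A(j,k;z)=\sum_{i=j}^{k-1}\beta_i(z)$ in a common coordinate system; once that is done, everything reduces to the elementary fact that $\beta_i(z)\in(0,\pi)$ for $z$ off the real axis.
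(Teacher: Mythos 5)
Your proof is correct and follows essentially the same route as the paper's: both reduce every inclusion to the fact that the angle sum defining $\DD_{j,k;l}$ is, in a common coordinate system, a sum of $k-j$ viewing angles each lying in $(0,\pi)$ off the real axis, so that enlarging or shrinking the index interval changes the sum monotonically and by less than $\pi$ per added or removed term. Your explicit re-pairing of the endpoints into the segments $[i-d,i]$, which makes each summand independent of $(j,k)$, is just a tidier bookkeeping of the paper's observation that ``removing each pair of points decreases the total viewing angle by at most $\pi$.''
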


\begin{proof}
  We first show that $\DD_{j',k';l}\subset\cl\DD_{j,k;1}$ for all
  $l$ with $1\le l\le k'-j'-1$. The first set consists of all points
  $z\in\CC$ such that $\sum_{i=1}^{k'-j'}\alpha(\pm
  a_{j',k';i};z)=l\pi$ in the centro-symmetric coordinates. Undoing
  the coordinate change~\eqref{eq:coo-change} yields
  \begin{eqnarray*}
    \DD_{j',k';l}
    &=&
    \{z\in\CC: \alpha\big({\pm(d-k'+1)};z\big)+\dots+
    \alpha\big({\pm(d-j')};z\big) 
    \ = \ 
    l\pi\},
    \\
    \cl\DD_{j,k;1}
    &=&
    \{z\in\CC: \alpha\big({\pm(d-k+1)};z\big)+\dots+
    \alpha\big({\pm(d-j)};z\big) 
    \ \ \; \ge \ 
    \pi\}.
  \end{eqnarray*}
  Now the required inclusion is clear, because the first set of points
  of which the viewing angle is taken is a subset of the second
  one. It remains to prove that
  $\DD_{j,k;k-j-1}\subset\cl\DD_{j',k';l}$ for all $1\le l\le
  k'-j'-1$; proving the extremal case $l=k'-j'-1$ is sufficient. Thus,
  we are required to show that $\sum_{m=d-k+1}^{d-j}\alpha(\pm
  m;z)=(k-j-1)\pi$ implies $\sum_{m=d-k'+1}^{d-j'}\alpha(\pm
  m;z)\ge(k'-j'-1)\pi$.  But this is true because the first sum has
  $k-j$ summands, the second $k'-j'$ summands, and removing each of
  the $(k-j)-(k'-j')=(k-k')+(j'-j)\ge0$ pairs of points from the
  points corresponding to the first summand decreases the total
  viewing angle by at most~$\pi$.

  Similarly, the first inclusion of \eqref{eq:delta1} follows because
  $\sum_{m=d-k+1-\delta_2}^{d-j+1+\delta_1}\alpha(\pm m;z) \ge
  (l+\delta_1+\delta_2)\pi$ implies 
  $\sum_{m=d-k+1}^{d-j}\alpha(\pm m;z) \ge l\pi$, by removing
  $\delta_1+\delta_2$ pairs of points, and the second one from
  an appropriate change of variables. Relations~\eqref{eq:delta2}
  are proved in exactly the same way. 
  %
\end{proof}

\begin{corollary}\label{cor:cone}
  For $z\notin\cl\DD_{0,d;1}$, the facets of the cone
  $\tau(z)=\pos{w_0(z),\dots,w_d(z)}$ are the rays spanned by $w_0(z)$
  and $w_d(z)$, and $w_1(z),\dots,w_{d-1}(z)$ appear in cyclic order
  inside~$\tau(z)$.
\end{corollary}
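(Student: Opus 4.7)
The plan is to verify the conclusion at a single reference point $z_0$ with $|z_0|$ very large, and then propagate it to all of $U:=\CC\setminus\cl\DD_{0,d;1}$ via a continuity argument using the absence of collinearities on $U$ guaranteed by Proposition~\ref{prop:bounded}.

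At the base point, I would use the factorization $b_i(z)=\tfrac{1}{d!}\prod_{j=i-d}^{i-1}(z-j)$ and the identity $\sum_{j=i-d}^{i-1}j = d(i-\tfrac{d+1}{2})$ to derive the first-order expansion
$$\arg b_i(z) \;=\; d\,\arg z \;+\; d\Bigl(i-\tfrac{d+1}{2}\Bigr)\,\frac{\sin\theta}{r} \;+\; O(r^{-2})$$
for $z=re^{i\theta}$ with $r$ large. Picking $z_0=Re^{i\theta_0}$ with $\sin\theta_0>0$ and $R$ so large that $z_0\in U$, this expansion shows that the arguments $\arg w_i(z_0)$ are strictly monotonic in $i$ and that their total span is $O(1/R)$, in particular smaller than $\pi$. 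Hence $\tau(z_0)$ is a pointed $2$-dimensional cone with extreme rays spanned by $w_0(z_0)$ and $w_d(z_0)$, and $w_1(z_0),\dots,w_{d-1}(z_0)$ appear in cyclic order between them.

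For the propagation, Propositions~\ref{prop:smooth} and~\ref{prop:ovals} ensure that the outermost oval $\DD_{0,d;1}$ is a smooth Jordan curve, so $U$ is the unique unbounded, open, connected component of $\CC\setminus\DD_{0,d;1}$; by Proposition~\ref{prop:bounded}, every $\DD_{j,k}$ lies in $\cl\DD_{0,d;1}$ and is thus disjoint from $U$. Together with Lemma~\ref{lem:det} and the fact that for the binomial basis the factors $p_k=(x-k)^2+y^2$ and $r_k$ from Proposition~\ref{prop:dualmatrix} are nonzero off the real axis, this yields $\det(w_j(z),w_k(z))\ne 0$ for every non-real $z\in U$ and every $0\le j<k\le d$. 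Given any non-real $z\in U$ in the upper half plane, connect it to $z_0$ by a continuous path $\gamma$ staying in the non-real upper-half part of $U$ (which is open and connected, since $\cl\DD_{0,d;1}$ is bounded); points in the lower half plane are handled via the symmetry $w_i(\bar z)=\overline{w_i(z)}$. Along $\gamma$, each $\arg w_i(\gamma(t))$ admits a continuous branch; a change of cyclic order would require two branches to coincide, while the cone opening up past a half-plane would require two to differ by~$\pi$. Both events correspond to $\det(w_j,w_k)=0$, forbidden on $U$. Hence the combinatorial structure at $z$ coincides with that at $z_0$.

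The hard part is ruling out that $\tau$ might silently open up to all of $\RR^2$ along $\gamma$ without any pair of generators becoming collinear; this is why one must track both types of angular degeneracies (coincidence \emph{and} antipodality of $\arg w_j, \arg w_k$) and appeal to the containment $\DD_{j,k}\subset\cl\DD_{0,d;1}$ for each of them. The real case $z\in U\cap\RR$ is degenerate (all $w_i$ lie on the $x$-axis) and the statement is to be interpreted in that limiting sense.
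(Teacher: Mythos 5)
Your proof is correct and follows essentially the same route as the paper: establish pointedness and the cyclic order of arguments at a far-away base point (the paper uses $z=N+i\e$ with the explicit differences $\beta_j=\arg(z+d-j)-\arg(z-j)$, you use an equivalent asymptotic expansion), then propagate via the fact that collinearity of $w_j,w_k$ off the real axis is exactly the vanishing of $D_{j,k}$, whose locus lies in $\cl\DD_{0,d;1}$ by Proposition~\ref{prop:bounded}. Your continuity step merely spells out in more detail what the paper compresses into ``an invocation of Proposition~\ref{prop:bounded} finishes the proof.''
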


\begin{proof}  
  The argument of $w_j(z)=\binom{z+d-j}{d}$ is $\big(\sum_{i=1}^d
  \arg{(z+i-j)}\big)\bmod 2\pi$, so that the difference of the
  arguments of $w_j(z)$ and $w_{j+1}(z)$ equals
  $\beta_j:=\arg(z+d-j)-\arg(z-j)\bmod 2\pi$;
  cf.~\cite{Braun-Develin06}. If we choose $z$ to have the form
  $z=N+i\e$, with $N\gg\e>0$,  it is not necessary to reduce
  $\beta_j$ modulo $2\pi$, and $0<\beta_0<\beta_1<\dots<\beta_d$; we may
  even achieve $\beta_d<\frac{\pi}{d}$, so that the total angle
  subtended by the $w_i(z)$ is strictly less than $\pi$, and $w_0(z)$
  and $w_d(z)$ span the facets of~$\tau(z)$. Now note that
  two vectors $w_i(z), w_j(z)$ become collinear iff there is a (not
  necessarily positive or negative) circuit involving the two, iff
  there is such a cocircuit involving $\ow_i(z),\ow_j(z)$, iff
  $\DD_{j,k}(z)=0$. An invocation of Proposition~\ref{prop:bounded}
  finishes the proof.
\end{proof}

We close with a lemma regarding the relative orientations of $w_j,w_k$
on $\DD_{j,k}$.

\begin{lemma}\label{lem:orientation}
  Let $z\in\DD_{j,k;l}$, and regard $w_i(z)=\binom{z+d-i}{d}$ as a
  vector in $\RR^2$. Then $w_j(z)$ and $w_k(z)$ point in the same
  direction iff $l$~is~even, and in opposite directions iff
  $l$~is~odd:
  \[
     \sign \big(w_j(z)\cdot w_k(z)\big) \ = \ (-1)^{l} \qquad
     \text{for } z\in\DD_{j,k;l}.
  \]
\end{lemma}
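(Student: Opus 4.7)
The plan is to compute the ratio $b_k(z)/b_j(z)$ as a rational function, identify its argument with the angle sum defining $\DD_{j,k;l}$, and then read off the sign of $w_j(z)\cdot w_k(z)$. First, after cancelling the common factors of $b_k(z)=\binom{z+d-k}{d}$ and $b_j(z)=\binom{z+d-j}{d}$, one obtains
\[
   \frac{b_k(z)}{b_j(z)} \;=\; \prod_{m=j}^{k-1}\frac{z-m}{z+d-m}.
\]
Applying the centro-symmetric shift~\eqref{eq:coo-change}, the $k-j$ numerator roots $\{j,\dots,k-1\}$ become exactly $\{+a_{j,k;i}\}_{i=1}^{k-j}$ and the $k-j$ denominator roots $\{j-d,\dots,k-d-1\}$ become $\{-a_{j,k;i}\}_{i=1}^{k-j}$, so in centered coordinates
\[
   \frac{b_k(z)}{b_j(z)} \;=\; \prod_{i=1}^{k-j}\frac{z-a_{j,k;i}}{z+a_{j,k;i}}.
\]

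Second, taking arguments of this factored form yields
\[
   \arg\!\bigl(b_k(z)/b_j(z)\bigr)
   \;=\; \sum_{i=1}^{k-j}\bigl[\arg(z-a_{j,k;i})-\arg(z+a_{j,k;i})\bigr]
   \;=\; A(j,k;z),
\]
the very angle sum appearing in~\eqref{eq:asum}. By definition of the $l$-th oval, $z\in\DD_{j,k;l}$ forces $A(j,k;z)=l\pi$, so $b_k(z)/b_j(z)$ is a real number of sign $(-1)^l$. To finish, write $w_i=(R_i,I_i)^T$ for $b_i=R_i+iI_i$ and compute
\[
   w_j(z)\cdot w_k(z)
   \;=\; R_j R_k+I_j I_k
   \;=\; \Real\!\bigl(\overline{b_j(z)}\,b_k(z)\bigr)
   \;=\; |b_j(z)|^2\,\Real\!\bigl(b_k(z)/b_j(z)\bigr).
\]
By Proposition~\ref{prop:ovals}, the oval $\DD_{j,k;l}$ crosses the real axis strictly between consecutive $a_{j,k;i}$'s and is smooth elsewhere, so $b_j(z)$ has no zero on $\DD_{j,k;l}$; hence $|b_j(z)|^2>0$, and $\sign(w_j(z)\cdot w_k(z))=\sign\!\bigl(b_k(z)/b_j(z)\bigr)=(-1)^l$.

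The only real bookkeeping hurdle is the coordinate change: confirming that the uncentered numerator/denominator roots of $b_k/b_j$ match up, respectively, with $+a_{j,k;i}$ and $-a_{j,k;i}$ after the translation~\eqref{eq:coo-change}. Once that identification is made, the lemma reduces to reading the defining equation of $\DD_{j,k;l}$ in polar form.
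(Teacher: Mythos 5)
Your proof is correct. The paper itself states Lemma~\ref{lem:orientation} without proof, but your argument is exactly the natural one and is consistent with the paper's own machinery: the identity $b_{k}/b_{j}=\prod_{m=j}^{k-1}\frac{z-m}{z+d-m}$ follows from the relation $g_m=b_{m+1}/b_m=\frac{z-m}{z+d-m}$ used in Proposition~\ref{prop:dualmatrix}, your identification of the shifted numerator and denominator roots with $\pm a_{j,k;i}$ checks out, and the same ``difference of arguments'' computation underlies the proof of Corollary~\ref{cor:cone}. The final reduction $w_j\cdot w_k=\Real(\overline{b_j}b_k)=|b_j|^2\Real(b_k/b_j)$ together with the observation that $b_j$ is nonvanishing on the oval (its zeros are real and excluded by Proposition~\ref{prop:ovals}) cleanly closes the argument.
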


\subsection{Conclusion: the root locus is bounded}
\label{sec:bounded}

\begin{theorem}\label{thm:binomialzeros}
  Let $f=\sum_{j=0}^d a_j\binom{z+d-j}{d}$ be a polynomial of degree
  $d$ with nonnegative coefficients $a_j\ge0$ with respect to the
  binomial coefficient basis. Then all non-real roots of $f$ are
  contained in the region $\cl\DD_{0,d;1}$ bounded by the outermost
  oval of the algebraic curve with equation $D_{0,d}(z)=0$, and any
  point inside $\cl\DD_{0,d;1}$ arises as a root of some such $f$.

  The real roots of $f$ all lie in the real interval $[-d,d-1]$.
\end{theorem}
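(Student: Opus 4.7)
Each basis polynomial $b_j(z)=\binom{z+d-j}{d}$ has its $d$ integer roots at $j-d,j-d+1,\dots,j-1$, and these collectively cover $\{-d,-d+1,\dots,d-1\}$. Outside $[-d,d-1]$ every factor of every $b_j$ has the same sign, so all basis polynomials share a common sign and no two disagree. For any $x$ inside $(-d,d-1)$ one can bracket $x$ between consecutive integer roots of two distinct basis polynomials to produce opposite signs, while integer $x$'s are themselves roots of some $b_j$. Observation~\ref{obs:realroots} then identifies $[-d,d-1]$ as the exact real-root locus.

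\textbf{Containment for non-real roots.} Let $z$ be non-real with $z\notin\cl\DD_{0,d;1}$. By Proposition~\ref{prop:bounded}, every component of every $\DD_{j,k}$ sits inside $\cl\DD_{0,d;1}$, so $z$ is exterior to all of them. Lemma~\ref{lem:leading} then pins $\sign D_{j,k}(z)=(-1)^{k-j-1}$ for each $0\le j<k\le d$. Substituting into the sign triple $\sigma_{i,j,k}(z)$ of Proposition~\ref{prop:signs} and simplifying, the three entries become $\{-s,s,-s\}$ with $s=(-1)^{i+j+k}$, which always contains both $+1$ and $-1$. Hence $z\notin\cS_{i,j,k}$ for every triple, and Theorem~\ref{thm:roots1} rules out $z$ as a root. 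This matches the geometric picture of Corollary~\ref{cor:cone}: outside $\cl\DD_{0,d;1}$ the cone $\tau(z)=\pos{w_0(z),\dots,w_d(z)}$ is pointed, so the origin is not in its positive span.

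\textbf{Tightness for non-real roots.} Fix a non-real $z_0\in\cl\DD_{0,d;1}$. We must exhibit a triple $(i,j,k)$ with $z_0\in\cS_{i,j,k}$. On $\DD_{0,d;1}$, Lemma~\ref{lem:orientation} with $l=1$ shows $w_0(z),w_d(z)$ become antiparallel, and Corollary~\ref{cor:cone} describes $\tau(z)$ just outside as pointed with extreme rays $w_0(z),w_d(z)$ and the remaining $w_i$'s in cyclic order inside. As $z$ crosses the outermost oval inward, $w_0$ and $w_d$ rotate past antiparallel; their joint positive span $\pos{w_0(z),w_d(z)}$ flips to a narrow pointed cone on the opposite side of the common line, while by continuity $w_1(z),\dots,w_{d-1}(z)$ still lie on the original side. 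Hence $\tau(z)=\RR^2$ in a collar inside $\DD_{0,d;1}$, the origin is strictly interior to $\tau(z)$, and Carath\'eodory in $\RR^2$ extracts three vectors carrying a strictly positive zero combination~---~exactly the required triple. To propagate the conclusion from the collar to the full interior, I would use that $\{z:\tau(z)=\RR^2\}$ is open in $\CC$, that the interior of $\cl\DD_{0,d;1}$ is connected, and that crossings of inner ovals $\DD_{j,k;l}$ only collinearize individual pairs $w_j(z),w_k(z)$, which in plane positive-span geometry cannot eliminate a surplus of spanning vectors once one is present.

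\textbf{Main obstacle.} The delicate point is exactly the last clause: verifying rigorously that $\tau(z)=\RR^2$ is \emph{preserved} across every inner oval crossing, not just in the initial collar neighborhood of $\DD_{0,d;1}$. A pure continuity argument is fragile because at each wall $\DD_{j,k;l}$ some pair of vectors becomes collinear. I expect the clean way around this is to stratify $\cl\DD_{0,d;1}$ by the sign vector $(\sign D_{j,k}(z))_{j<k}$, invoke the nesting relations of Proposition~\ref{prop:bounded} so that on each stratum one can point to a specific triple~---~perhaps always of the form $(0,j,d)$~---~whose $\sigma_{0,j,d}(z_0)$ is monochromatic, and check by the sign calculus of Proposition~\ref{prop:signs} that at least one such $j$ is available for every non-real $z_0\in\cl\DD_{0,d;1}$.
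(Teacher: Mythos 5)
Your treatment of the real roots and of the containment direction is correct. For containment you actually carry out the ``short calculation'' that the paper only alludes to: outside $\cl\DD_{0,d;1}$ Proposition~\ref{prop:bounded} puts $z$ outside every oval of every $\DD_{j,k}$, Lemma~\ref{lem:leading} then gives $\sign D_{j,k}(z)=(-1)^{k-j-1}$, and the resulting sign triple $\{(-1)^{i+j+k+1},(-1)^{i+j+k},(-1)^{i+j+k+1}\}$ always contains $\pm1$, so Theorem~\ref{thm:roots1} applies. The paper's displayed proof instead quotes Corollary~\ref{cor:cone} directly (the cone $\tau(z)$ is pointed outside $\cl\DD_{0,d;1}$); the two routes are equivalent and your version is, if anything, more explicit.

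The tightness direction, however, has a genuine gap, and it is exactly the one you flag. Your collar argument just inside $\DD_{0,d;1}$ is fine, but the propagation principle you invoke --- that ``collinearizing individual pairs cannot eliminate a surplus of spanning vectors once one is present'' --- is false as a general principle: positive spanning of a planar configuration is lost precisely at a wall where some pair becomes antipodal with all remaining vectors on one side, which is exactly what happens at $\DD_{0,d;1}$ itself when crossed outward. So a priori the same loss could occur at an inner oval. The paper closes this by choosing the witnessing triples more carefully than your guess $(0,j,d)$: it decomposes the region into the annuli $S_k=\cl\DD_{0,k;1}\smallsetminus\cl\DD_{0,k-1;1}$ for $2\le k\le d$, which cover all of $\cl\DD_{0,d;1}$ because $\cl\DD_{0,k-1;1}\subset\cl\DD_{0,k;1}$ by~\eqref{eq:delta2} and $\cl\DD_{0,1;1}=\emptyset$, and shows that on $S_k$ the triple $(0,k-1,k)$ is positively spanning. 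The point is that the combinatorics of this particular subconfiguration can only change on $\DD_{0,k-1}\cup\DD_{0,k}$ (since $\DD_{k-1,k}=\emptyset$), and by~\eqref{eq:delta1} all inner ovals $\DD_{0,k;l}$, $l\ge2$, lie inside $\cl\DD_{0,k-1;1}$; hence the only walls meeting $\overline{S_k}$ are the two outermost ovals $\DD_{0,k-1;1}$ and $\DD_{0,k;1}$, the spanning property of $(0,k-1,k)$ is constant on $S_k$, and it flips from ``pointed'' to ``spanning'' as one crosses $\DD_{0,k;1}$ inward. Without an argument of this kind --- pinning down, for each stratum, a triple whose only relevant walls are the boundary of that stratum --- your proof of the ``any point inside arises as a root'' half is incomplete.
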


\begin{proof}
  The first statement can be proved by a short calculation involving
  Lemma~\ref{lem:leading} and the general tool of
  Theorem~\ref{thm:roots1}.
%
%
  However, we have accumulated enough information about the special
  curves~$\DD_{j,k}$ arising for the binomial coefficient basis to
  give a direct proof: By Corollary~\ref{cor:cone}, the vectors
  $w_0=w_0(z),\dots,w_d=w_d(z)$ are positively spanning for
  $z\notin\cl\DD_{0,d;1}$.

  Next, suppose that $z\in\cl\DD_{0,d;1}$ falls inside the region
  $S_k:=\cl\DD_{0,k;1}\smallsetminus\cl\DD_{0,k-1;1}$ for some
  $k\in\NN$ with $2\le k\le d$. Such a $k$~exists, because
  $\cl\DD_{0,k-1;1}\subset\cl\DD_{0,k;1}$ by~\eqref{eq:delta2}, and
  $\cl\DD_{0,1;1}=\emptyset$. We claim that in this situation, the
  vectors $w_0$, $w_{k-1}$~and~$w_k$ are positively spanning. Indeed,
  the locus of points in the complex plane where the combinatorics of
  this subconfiguration changes is exactly $\DD_{0,k-1}\cup\DD_{0,k}$,
  because $\DD_{k,k-1}=\emptyset$. Moreover,
  $\DD_{0,k;l}\subset\cl\DD_{0,k-1;1}$ for~$l\ge 2$
  by~\eqref{eq:delta1}, so the boundary of the region~$S_k$ is
  $\DD_{0,k-1;1}\cup\DD_{0,k;1}$, and the property of the three
  vectors being spanning or not remains
  constant inside~$S_k$. Since outside of~$\cl\DD_{0,k;1}$, these
  vectors are \emph{not} positively spanning by Corollary~\ref{cor:cone},
  but this changes when crossing~$\partial S_k$, the second statement
  follows. 

Finally, the case of real roots was dealt with at the
  beginning of the present Section~\ref{sec:binomial}.
\end{proof}

\begin{example}\label{ex:n2}
  Let $d=3$. Then 
  \[
     \oW \ = \ 
     \begin{pmatrix}
       p_0 & 0\\
       -q_0 & p_1\\
       r_0 & -q_1\\
       0 & r_1
     \end{pmatrix},
  \]
  $q_0=2(x+1)^2+2y^2-4$, $q_1=2x^2+2y^2-4$, $p_0=x^2+y^2$,
  $p_1=(x-1)^2+y^2$, $r_0=(x+2)^2+y^2$ and
  $r_1=(x+1)^2+y^2$. Furthermore, $D_{0,2}=-q_0r_1$,
  $D_{0,3}=q_0q_1-p_1r_0$, $D_{1,3}=-p_0q_1$, and $D_{i,j}\ge0$
  otherwise. Now
  \begin{align*}
    \cS_{012} &= \big\{ z:D_{1,2}\ge0, \ D_{0,2}, \ D_{0,1}\ge0\big\}, &
    \cS_{013} &= \big\{ z:D_{1,3}, \ D_{0,3}, \ -D_{0,1}\le 0\big\},\\
    \cS_{023} &= \big\{ z:D_{2,3}\ge0, \ -D_{0,3}, \ -D_{0,2}\big\}, &
    \cS_{123} &= \big\{ z:-D_{2,3}\le0, \ -D_{1,3}, \ -D_{1,2}\le0\big\},
  \end{align*}
  so that by Figure \ref{fig:pic3} and Theorem \ref{thm:binomialzeros}
  all non-real roots of polynomials of degree $3$ with nonnegative
  coefficients in the binomial coefficient basis lie in the union of
  these regions.
\end{example}

\begin{figure}[htbp]
  \centering
  \includegraphics[width=.8\linewidth]{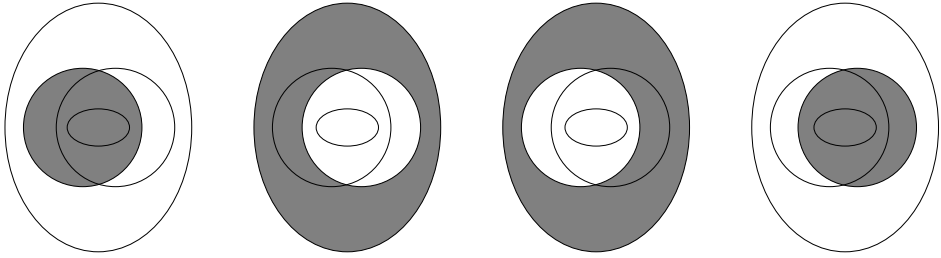}
  \caption{From left to right, the semialgebraic sets $\cS_{012}$,
    $\cS_{013}$, $\cS_{023}$, $\cS_{123}$ (shaded). Their union equals the
    entire interior of the bounding curve $C$, which by
    Theorem~\ref{thm:binomialzeros} is precisely the locus of possible
    non-real roots.}
  \label{fig:pic3}
\end{figure}




\section{Incorporating additional linear constraints}\label{sec:linear}

\subsection{Linear inequalities}

Suppose we not only know that the coefficients $a_i$ of a polynomial
$f=\sum_{i=0}^d a_i b_i$ with respect to some basis $B=\{b_i:i=0,\dots,d\}$ are
nonnegative, but also that they satisfy a linear inequality
$\sum_{i=0}^d\lambda_i a_i\le0$; the `$\ge0$'~case is of course 
accounted for by reversing the signs of the~$\lambda_i$. We use a
slack variable $s\ge0$ to rewrite our inequality as
\[
  \sum_{i=0}^d \lambda_i a_i + s \ = \ 0.
\]
To incorporate this into our Gale dual matrices $W$ and
$\oW$, we introduce the vector $\ta=(a_0,\dots,a_d,s)^T$. The analogue
$W\ta=0$ of~\eqref{eq:primal} is
\[
  \begin{pmatrix}
    R_0 & R_1 & \dots & R_d & 0\\
    I_0 & I_1 & \dots & I_d & 0\\
    \lambda_0 & \lambda_1 & \dots & \lambda_d & 1
  \end{pmatrix}
  \begin{pmatrix}
    a_0\\ a_1\\\vdots\\a_d\\ s
  \end{pmatrix}
  \ = \ 0,
\] 
and we name the columns of this new $W$ by $w_0,\dots, w_{d+1}$.
We obtain a Gale dual $\tW=\tW(z)$ of $W$ by appending
the row vector
\[
   \ow_{d+1} \ = \ 
   (\omega_0, \dots, \omega_{d-2}) \ = \
   \big({-\lambda_i p_i} + \lambda_{i+1} q_i - \lambda_{i+2} r_i
   \ : \ 0\le i\le
   d-2\big)
\]
to the matrix $\oW$ from \eqref{eq:dualmatrix}. For the polynomial
$f(z)=\sum_{i=0}^d a_i b_i(z)$ with $a_i\ge0$ and $\sum_{i=0}^d
\lambda_i a_i\le 0$ to have a zero at $z=z_0$, the vector~$\ta$ must
lie in the column space of~$\tW(z_0)$ and have nonnegative entries;
equivalently, there must exist a vector
$\mu=(\mu_0,\dots,\mu_{d-2})^T$ with $\tW(z_0)\mu=\ta$. Geometrically,
we think of $\mu$ as the normal vector of a linear hyperplane that
leaves all vectors $\ow_i$ (weakly) on one side. In particular, if the
linear inequality is strict (so that $s>0$), then we are  only
interested in linear hyperplanes that do not contain $\ow_{d+1}$.

In general, $m\ge1$ independent linear inequalities yield a
$(d+1+m)\times(d-1)$-matrix $\tW$.  Consider the configuration of
$d+1+m$ vectors in $\RR^{d-1}$ spanned by the rows of $\tW$. Each
$(d-2)$-tuple of vectors among these spans a linear hyperplane, and we
would like to know when the $m+3$~remaining vectors all lie on the
same side of it.  As before, we treat strict inequalities by only
considering those linear hyperplanes that do not contain any of the
$m$~``new'' vectors~$\ow_j$, and to simplify the discussion we will
focus on these.

We thus fix an ordered subset $J=\{j_1,\dots,j_{m+3}\} =
\{j_1< j_2 < j_3\}\cup\{d+1,\dots,d+m\}$ of $\{0,\dots,d+m\}$; this
set will index the rows of $\tW$ \emph{not} on a linear hyperplane.
Next, we calculate a linear form $\varphi_\bJ$ whose vanishing locus
is the hyperplane spanned by the $d-2$ vectors not indexed by $J$: it
is the determinant of the matrix obtained by deleting from $\tW$ all
rows indexed by $J$, and adding a first row of variables. The sign
$\sigma_{\bJ,i}(z)$ of $\varphi_\bJ(w_{j_i})$ at a point $z\in\CC$ is
then obtained by plugging the coordinates of $w_{j_i}=w_{j_i}(z)$ into
these variables, i.e., by \emph{not} deleting the row with index
$j_i$, but instead permuting it to the first row and then taking the
sign of the determinant of the resulting matrix.  More precisely, if
we denote by~$\tW_K$ the matrix obtained from $\tW$ by deleting the
rows indexed by $K\subset\{0,\dots,d+m\}$, then
\begin{equation}\label{eq:sigma}
  \sigma_{\bJ,i}(z) \ = \ 
  (-1)^{j_i+i+1}\sign\det\tW_{J\smallsetminus\{j_i\}}(z),
  \qquad\text{for }i=1,\dots,m+3.
\end{equation}
                               
Writing
$\sigma(\bJ,z)=\{\sigma_{\bJ,1}(z),\dots,\sigma_{\bJ,m+3}(z)\}$, we
can summarize our discussion as follows:

\begin{theorem}\label{thm:ineqs}
  Assume that the coefficients of $f$ satisfy $m\ge1$ strict linear
  inequalities, indexed from $d+1$ to $d+m$.  Let
  \[
     \cS(J) \ = \ 
     \big\{z\in\CC: \sigma(\bJ,z)=\{-1,0\} \text{ or }
     \sigma(\bJ,z)=\{0,+1\}\big\}. 
  \]
  Then the set of roots of $f$ is contained in the union $\bigcup_{J}
  \cS(J)$, where $J$ runs through all sets of the form $\{j_1, j_2,
  j_3\}\cup\{d+1,\dots,d+m\}$ with $0\le j_1<j_2<j_3\le d$; put
  differently, if $\{-1,1\}\subseteq\sigma(\bJ,z)$ for each such $J$,
  then $z_0$ is not a root of $f$.\hfill$\Box$
\end{theorem}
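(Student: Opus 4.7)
The plan is to apply the same Gale duality template as in Proposition~\ref{prop:signs} and Theorem~\ref{thm:roots1}, now to the enlarged Gale dual $\tW(z)$ of size $(d+1+m)\times(d-1)$ with the slack rows $\ow_{d+1},\dots,\ow_{d+m}$ treated on equal footing with the original rows $\ow_0,\dots,\ow_d$; the only new wrinkle is that the slack rows must appear in every relevant nonnegative dependence with strictly positive coefficient, in order to encode the strictness of the $m$ inequalities.

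First I would translate the hypothesis ``$z_0$ is a root of some admissible $f$'' into the combinatorial statement that there exists $\ta\in\RR^{d+1+m}$ with $W(z_0)\ta=0$, $\ta_i\ge 0$ for $0\le i\le d$, and $\ta_{d+j}>0$ for $1\le j\le m$. Choosing such $\ta$ with minimum support among all admissible vectors, a standard rank argument (the restriction of $W$ to the support of $\ta$ has rank one less than the support size, while $\mathrm{rank}\,W\le m+2$) shows $|\mathrm{supp}(\ta)|\le m+3$, so its support decomposes as $\{j_1,\dots,j_r\}\cup\{d+1,\dots,d+m\}$ with $r\le 3$ and $j_1<\cdots<j_r\in\{0,\dots,d\}$.

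Next, I would invoke Gale duality for the enlarged configuration: this minimal nonnegative dependence of columns of $W(z_0)$ corresponds to a nonnegative cocircuit of $\tW(z_0)$ with identical support and sign pattern. Setting $J=\{j_1,j_2,j_3\}\cup\{d+1,\dots,d+m\}$ (padding the non-slack part to size three if $r<3$), this cocircuit is realized by the hyperplane in $\RR^{d-1}$ cut out by the linear form $\varphi_{\bJ}$ obtained from $\tW_J$ by prepending a row of variables and expanding the resulting $(d-1)\times(d-1)$ determinant along that row, exactly as in Proposition~\ref{prop:signs}. Evaluating $\varphi_{\bJ}$ on $\ow_{j_i}$ reinserts that row at the top of $\tW_{J\smallsetminus\{j_i\}}$, contributing the permutation sign $(-1)^{j_i+i+1}$ recorded in~\eqref{eq:sigma}. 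The nonnegative cocircuit condition that all rows of $\tW(z_0)$ indexed by $J$ lie weakly on a single side of the hyperplane then translates precisely into $\sigma(\bJ,z_0)\subseteq\{-1,0\}$ or $\sigma(\bJ,z_0)\subseteq\{0,+1\}$, i.e.\ $z_0\in\cS(J)$.

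The main obstacle I anticipate is the sign bookkeeping in~\eqref{eq:sigma}: the factor $(-1)^{j_i+i+1}$ mixes the ambient row index $j_i\in\{0,\dots,d+m\}$ with the positional index $i\in\{1,\dots,m+3\}$ of $j_i$ within $J$, and verifying that this combination correctly tracks the parity of the permutation moving $\ow_{j_i}$ to the top of $\tW_{J\smallsetminus\{j_i\}}$ needs careful bookkeeping, in particular because the $m$ slack rows always occupy the last $m$ slots of $J$ irrespective of the choice of $j_1,j_2,j_3$. A subsidiary subtlety is the padding step when $r<3$: a dimension count shows that a minimal cocircuit of size $m+r$ forces $3-r$ rows of $\tW_{\bJ}$ to be linearly dependent on the others (since $|\bar T|=d+1-r>d-2$), and after extracting a basis of $\mathrm{span}(\bar T)$ and moving the remaining $3-r$ redundant rows into $J$, the new $\bar J$ still spans the same hyperplane while the moved rows contribute only zero signs to $\sigma(\bJ,z_0)$, as required to keep the sign set inside $\{-1,0\}$ or $\{0,+1\}$.
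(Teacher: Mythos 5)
Your overall route---augment $W$ with the slack columns and the $\lambda$-rows, pass to the enlarged Gale dual $\tW$, reduce a root at $z_0$ to a small-support nonnegative dependence, and read off the signs via the determinant expansion~\eqref{eq:sigma}---is exactly the paper's; the theorem is stated there with a $\Box$ precisely because it summarizes the preceding discussion, and your sign bookkeeping matches~\eqref{eq:sigma}.

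There is, however, a gap in the support bound once $m\ge2$. You take $\ta$ of minimum support among all \emph{admissible} vectors (nonnegative, strictly positive on the $m$ slack coordinates) and assert that the columns of $W$ on its support have rank one less than the support size. That is the circuit property, and the usual exchange argument proving it is obstructed here by the strict positivity constraints: perturbing $\ta$ inside the kernel restricted to its support may, in both directions, first drive a \emph{slack} entry to zero, which destroys admissibility without shrinking the support. Concretely, if the only nonnegative circuits of $W$ through slack index $d+1$ have support $\{0,1,2,d+1\}$ and the only ones through $d+2$ have support $\{3,4,5,d+2\}$, then every admissible vector has support of size $8>m+3$ and the corresponding columns have corank $2$; moreover no single nonnegative circuit contains all of $\{d+1,\dots,d+m\}$, so the support need not decompose as you claim. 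The repair is standard and keeps you on the same path: decompose the nonnegative covector $\mu$ (equivalently, the admissible $\ta$) conformally into nonnegative cocircuits of $\tW$, i.e.\ nonnegative circuits of $W$; each of these automatically has support of size at most $\mathrm{rank}(W)+1\le m+3$, but it need not meet every slack index. Pick any one of them and pad its support up to a set $J=\{j_1,j_2,j_3\}\cup\{d+1,\dots,d+m\}$, padding with the missing slack indices as well as with missing $j$'s; the padded positions contribute only zeros to $\sigma(\bJ,z_0)$, so $\{-1,1\}\not\subseteq\sigma(\bJ,z_0)$ still follows. For $m=1$ your argument is correct as written, since any nonnegative circuit through the unique slack index is itself admissible.
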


In the case $m=1$ and $J=\{j,k,l,d+1\}$, we obtain from
\eqref{eq:sigma} that 
\begin{equation}\label{eq:sigmaJ}
    \sigma(\bJ,z) \ = \ \bigg\{ (-1)^{j} \sign D_{k,l},\
    (-1)^{k+1} \sign D_{j,l}, \
    (-1)^{l} \sign D_{j,k},\
    (-1)^d \sign\det\tW_{\{j,k,l\}}
    \bigg\}.
\end{equation}
Expanding the last determinant along its last row yields
\begin{equation}\label{eq:thedet}
  \det \tW_{\{j,k,l\}} \ = \
  (-1)^d \sum_{c=0}^{d-2} (-1)^c \omega_i [\oW]_{\{j,k,l\};c},
\end{equation}
where $[\oW]_{\{j,k,l\};c}$ stands for the minor of~$\oW$ obtained by
deleting rows $j,k,l$ and column $c$. This formula can be evaluated as
follows:

\begin{lemma}\label{lem:minor}
  Let $m=1$, $0\le j<k<l\le d$, and $0\le c\le d-2$.  Then
  \[
  [\oW]_{\{j,k,l\};c} \ = \
  \begin{cases}
    p_0\cdots p_{j-1} D_{j,c+1} p_{c+1}\cdots p_{k-1} D_{k,l} 
    r_{l-1}\cdots r_{d-2} & \text{if }\; 0\le c \le k-1,\\
    p_0\cdots p_{j-1} D_{j,k} r_{k-1}\cdots r_{c-1} D_{c+1,l}
    r_{l-1}\cdots r_{d-2} & \text{if }\; k-1\le c\le d-2.\\
  \end{cases}
  \]
  Here we follow the convention that $p_a\cdots p_b=r_a\cdots r_b=1$
  if $a>b$, but $D_{a,b}=0$ for $a\ge b$. In particular,
  $[\oW]_{\{j,k,l\};c}=0$ for $0\le c\le j-1$ and $l-1\le c\le d-2$. 
\end{lemma}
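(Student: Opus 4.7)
My plan is to exhibit an explicit block-triangular decomposition of the square matrix $M:=[\oW]_{\{j,k,l\};c}$ and then compute each diagonal block by the reasoning used in the proof of Lemma~\ref{lem:det}. The key structural observation is that after deleting column~$c$ from~$\oW$, rows $0,\dots,c$ have their support contained in columns $\{0,\dots,c-1\}$, rows $c+2,\dots,d$ have their support contained in columns $\{c+1,\dots,d-2\}$, and only row~$c+1$ straddles both column halves (retaining $r_{c-1}$ in column $c-1$ and $p_{c+1}$ in column $c+1$). Two natural block structures arise depending on whether this bridging row is attached to the upper or to the lower half, and that dichotomy is what produces the two cases in the statement.

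In Case~A ($0\le c\le k-1$) I attach row~$c+1$ to the bottom half, so the top--right off-block of $M$ vanishes and $M$~becomes block lower-triangular, with upper rows $\{0,\dots,c\}$ paired with upper columns $\{0,\dots,c-1\}$ and lower rows $\{c+1,\dots,d\}$ paired with lower columns $\{c+1,\dots,d-2\}$. For both diagonal blocks to be square after the three row deletions, exactly one deletion must fall above and two below, which (since $c\le k-1$) forces $j\le c<k$. In Case~B ($k-1\le c\le d-2$) I attach row~$c+1$ to the top half instead; the lower--left off-block then vanishes, $M$~becomes block upper-triangular, and squareness forces two deletions ($j,k$) above and one ($l$) below, i.e.\ $k\le c+1\le l-1$. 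The boundary $c=k-1$ is covered by either analysis, and the two resulting formulas agree there via empty-product conventions.

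Once the block-triangular structure is fixed, each diagonal block of~$M$ is itself a submatrix of $\oW$ on a contiguous row and column range with some rows deleted, and therefore decomposes by the reasoning of Lemma~\ref{lem:det} into consecutive sub-blocks of the form ``lower-triangular $p$-block, tridiagonal $D$-block, upper-triangular $r$-block'' (with end sub-blocks possibly empty if no row deletion occurs at the corresponding boundary). Multiplying the factors yields, in Case~A, $(p_0\cdots p_{j-1})\,D_{j,c+1}\,(p_{c+1}\cdots p_{k-1})\,D_{k,l}\,(r_{l-1}\cdots r_{d-2})$, and in Case~B, $(p_0\cdots p_{j-1})\,D_{j,k}\,(r_{k-1}\cdots r_{c-1})\,D_{c+1,l}\,(r_{l-1}\cdots r_{d-2})$, matching the claimed formulas. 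In the degenerate ranges $c\le j-1$ and $c\ge l-1$, one of the two would-be diagonal blocks is strictly rectangular with more rows than columns (because either zero or all three row deletions concentrate in the upper group), so its rows are linearly dependent and $\det M=0$; consistently, both formulas also vanish via $D_{j,c+1}=0$ and $D_{c+1,l}=0$ under the convention $D_{a,b}=0$ for $a\ge b$.

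The main obstacle will be bookkeeping rather than any conceptual subtlety. I have to verify carefully that the bridging row~$c+1$ really does act as a harmless off-triangular entry (i.e.\ that the single nonzero entry it contributes to the lower--left block of Case~A, or to the upper--right block of Case~B, does not spoil the factorization of $\det M$ as a product of the two diagonal block determinants), and that the inner three-way split of each diagonal block aligns precisely with the Lemma~\ref{lem:det} pattern after the relevant index shift. Since Case~B is essentially the mirror image of Case~A, once the first case has been handled with care, the second should follow with minimal extra effort.
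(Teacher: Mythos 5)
Your proposal is correct and follows essentially the same route as the paper's (very terse) proof: decompose $[\oW]_{\{j,k,l\};c}$ into diagonal blocks, note that the single straddling entry of row $c{+}1$ sits in an off-diagonal block of a block-triangular matrix and hence does not contribute, and evaluate each diagonal block by the Lemma~\ref{lem:det} pattern. Your treatment of the squareness dichotomy, the boundary case $c=k-1$, and the degenerate ranges where the determinant vanishes is a careful and accurate expansion of what the paper leaves implicit.
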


\begin{proof}
  In each case, $\oW_{K;c}$ decomposes into square blocks on the
  diagonal whose determinants yield the stated expressions. The
  elements outside these blocks do not contribute to $[\oW]_{K;c}$,
  because the determinant of a block matrix of the form $
  (\begin{smallmatrix} A & 0 \\ C & D
  \end{smallmatrix})$ or $
  (\begin{smallmatrix}
    A & B \\ 0 & D
  \end{smallmatrix})$ is $\det A\det D$.
\end{proof}

To recapitulate,  additional linear inequalities can only restrict
further the location of possible roots of~$f$. If the vectors
$w_i(z_0),w_j(z_0),w_k(z_0)\in\cB$ do not witness a possible root
of~$f$, in other words $\{\pm1\}\subseteq \big\{ (-1)^{j} \sign
D_{k,l}(z_0),\ (-1)^{k+1} \sign D_{j,l}(z_0), (-1)^{l} \sign
D_{j,k}(z_0)\big\}$, nothing changes after incorporating the
additional sign $(-1)^d \sign\det\tW_{\{j,k,l\}}(z_0)$: the vectors
$w_i(z_0),w_j(z_0),w_k(z_0),w_l(z_0)$ do still not witness a root
of~$f$ at~$z_0$. If, on the other hand, the new sign is different from
the old ones,  there is ``one reason less'' for
$z_0$~to be a root.

\subsection{Linear equations}  

If the coefficients of $f$ satisfy $m$~independent linear \emph{equations} of
the form $\sum_{i=0}^d\lambda_i a_i = 0$ (corresponding to the case
$s=0$), the $d-1-m$~columns of the new Gale dual $\tW$ will of course
be linear combinations of the columns of the old one, but in general
we will not be able to give an explicit expression for them.  We
therefore only treat some special cases that arise in the context of
Ehrhart and chromatic polynomials, and defer further discussion to
Section~\ref{subsec:chromatic}.

\section{Applications}
\label{sec:applications}

\subsection{Ehrhart polynomials}
\label{sec:ehrhart}

From \cite{Beck-etal05}, we  know that the following inequalities
hold for the coefficients of $i_Q$ in the binomial basis:
\[
  a_d + a_{d-1} + \dots + a_{d-s} \ \le \ 
  a_0 + \dots + a_s + a_{s+1}
  \qquad\text{for all }
  0\le s \le \fl{(d-1)/2}.
\]

For $s=0$, the inequality reads $a_d\le a_0+a_1$, and $\ow_{d+2}$ is
\[
   \big( p_0 - q_0,\, p_1,\, 0,\,\dots,\, 0,\, -r_{d-2}\big).
\]
Equation \eqref{eq:thedet} and Lemma \ref{lem:minor} thus specialize as follows:
\begin{eqnarray*}
  \det \tW_{\{0,k,d\}} & = &
  \begin{cases}
    (-1)^d(p_0-q_0)D_{1,d} - (-1)^d p_1r_0 D_{2,d} - r_0\cdots r_{d-2}
    & \text{if } k=1,\\
    (-1)^dp_0\cdots p_{k-1} D_{k,d} - D_{0,k} r_{k-1}\cdots r_{d-2}
    & \text{if } 2\le k\le d-1,
  \end{cases}\\
  \det\tW_{\{1,k,l\}} &=&
  (-1)^d q_0p_0\cdots p_{k-1} D_{k,l} r_{l-1}\cdots r_{d-2}
  - p_0 D_{1,k}r_{k-1}\cdots r_{d-2} \iverson{l=d}
\end{eqnarray*}
(Here we have used Iverson's notation: $\iverson{l=d}$ evaluates to
$1$ if $l=d$, and to $0$ otherwise.)  Explicit calculation using
Lemma~\ref{lem:leading} yields that the coefficient of the leading
term $r^{2d-2}$ in $(-1)^d \det\tW_{\{0,k,d\}}$ is $2(-1)^{d+1}$ for $k=1$ and
$(-1)^{d+k+1}(d-2k)$ for $2\le k\le d-1$. Thus, the sign of this
coefficient is
\begin{equation}\label{eq:signcoeff}
  \sign \big([r^{2d-2}] (-1)^d \det\tW_{\{0,k,d\}}\big) \ = \
  \begin{cases}
    (-1)^{d+k+1} &\text{for } 0< k<d/2, \\
    (-1)^{d+k} & \text{for } d/2< k < d.
  \end{cases}
\end{equation}

We examine the effect that this has on $\sigma(\bJ,z)$. If
$z\in\CC$ does not lie in $\cl D_{0,d;1}$, the first three entries
of~\eqref{eq:sigmaJ} already yield two different signs, no matter what
sign the last determinant takes. Now let $z$ lie inside $\cl
D_{0,d;1}$, but outside the union of all $\cl D_{i,j;1}$ with
$(i,j)\ne(0,d)$. If $\{j,k,l\}$ does not contain $\{0,d\}$, the first
three signs of $\sigma(\bJ,z)$ in~\eqref{eq:sigmaJ} will again contain
two different ones. The interesting situation is thus $J=\{0,k,d\}$,
in which case $\sigma(\bJ,z)=\big\{(-1)^{k+d+1},
(-1)^d\sign\det\tW_{\{0,k,d\}}(z)\big\}$. Combining this
with~\eqref{eq:signcoeff}, we see that these signs are different,
i.e., $z$'s ``last opportunity'' $J$ also does not make it  an
Ehrhart zero, if $z$~lies inside the outermost component of the zero
locus of $\det\tW_{\{0,k,d\}}$ for $0<k<d/2$, but outside all
components of $\det\tW_{\{0,k,d\}}$ for $d/2<k<d$.
Figure~\ref{fig:plotd10} shows that this actually
occurs.

\begin{figure}[htp]
  \centering
  \includegraphics[height=.25\linewidth]{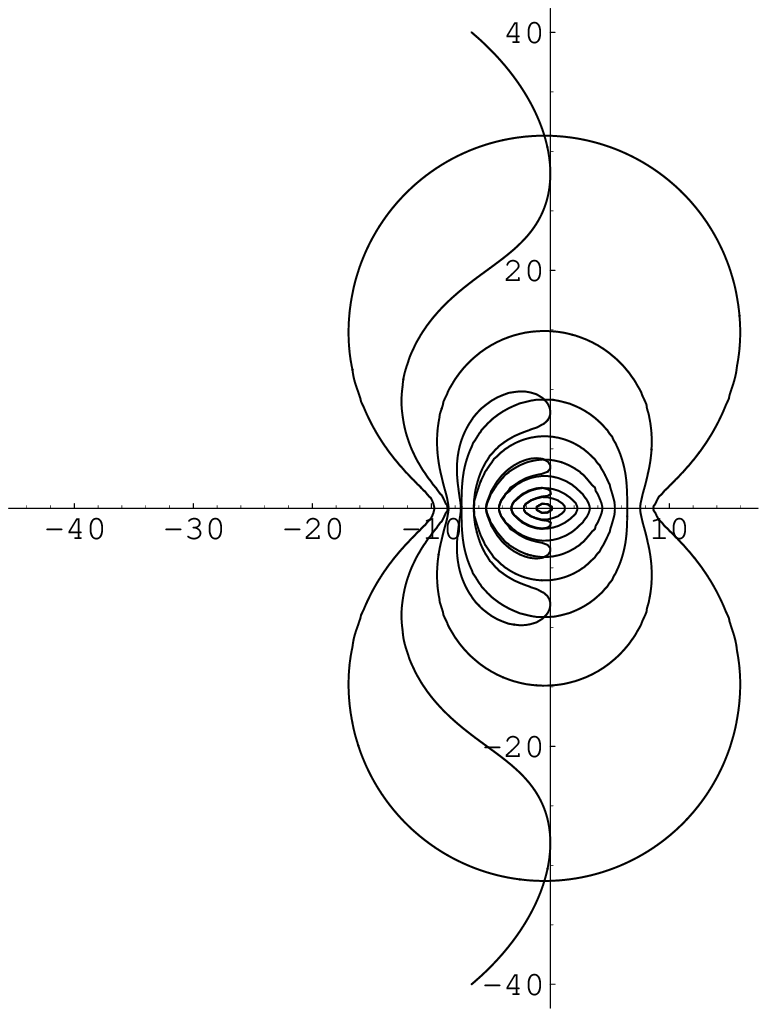}
  \includegraphics[height=.25\linewidth]{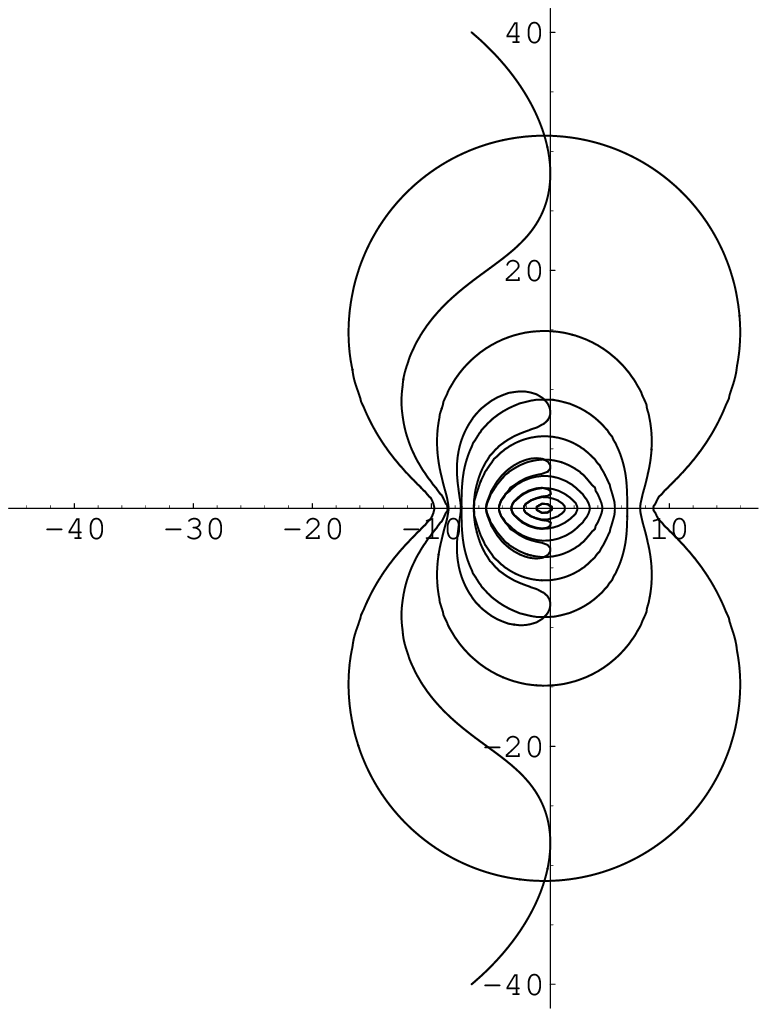}
  \includegraphics[height=.25\linewidth]{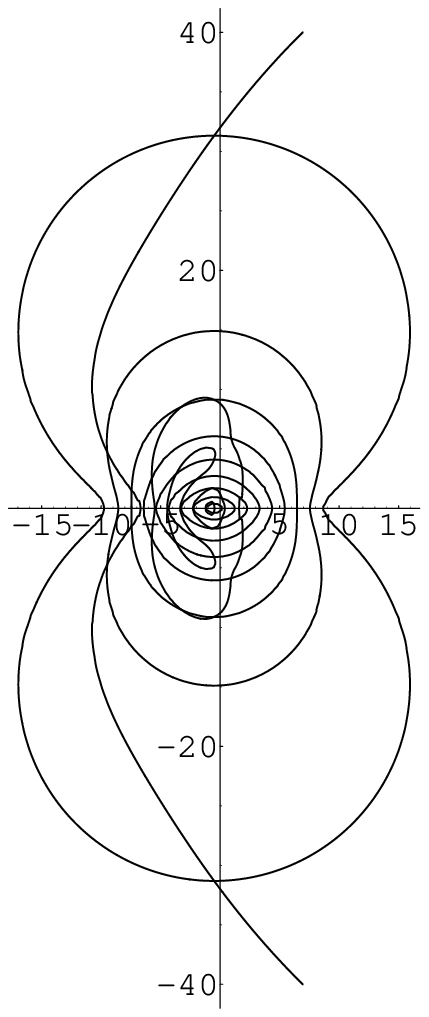}
  \includegraphics[height=.25\linewidth]{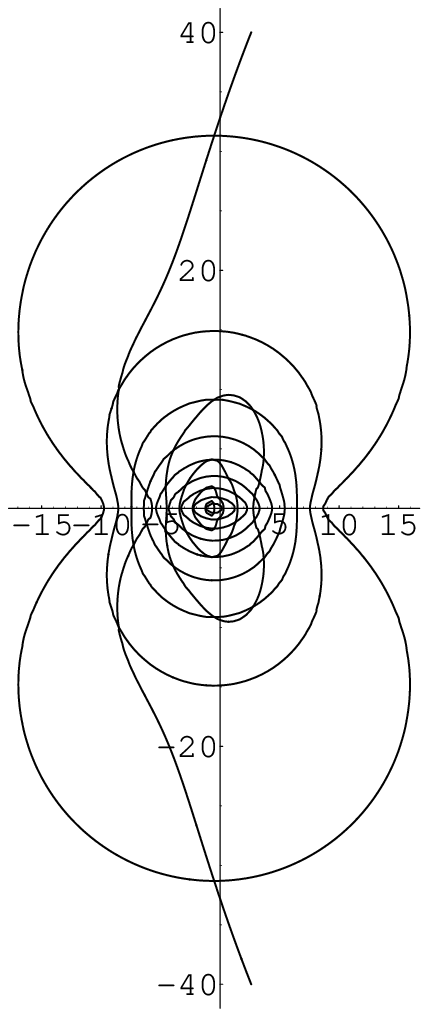}
  \includegraphics[height=.25\linewidth]{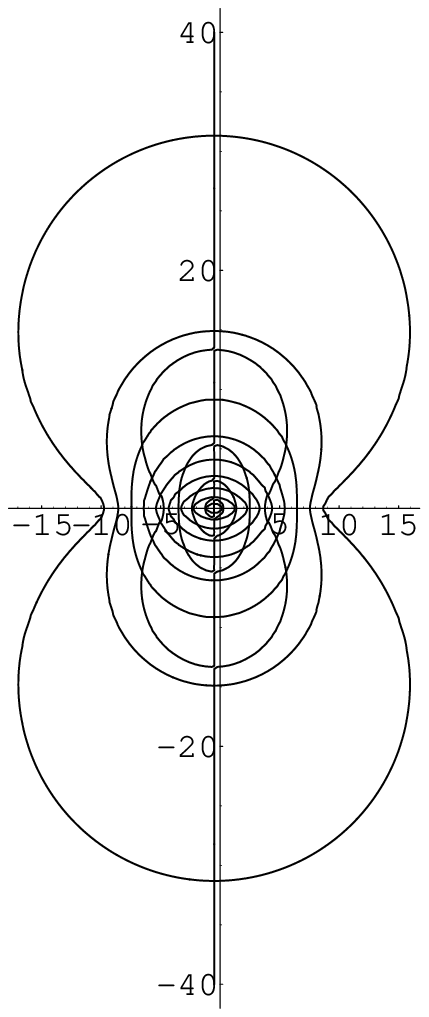}

  \includegraphics[height=.25\linewidth]{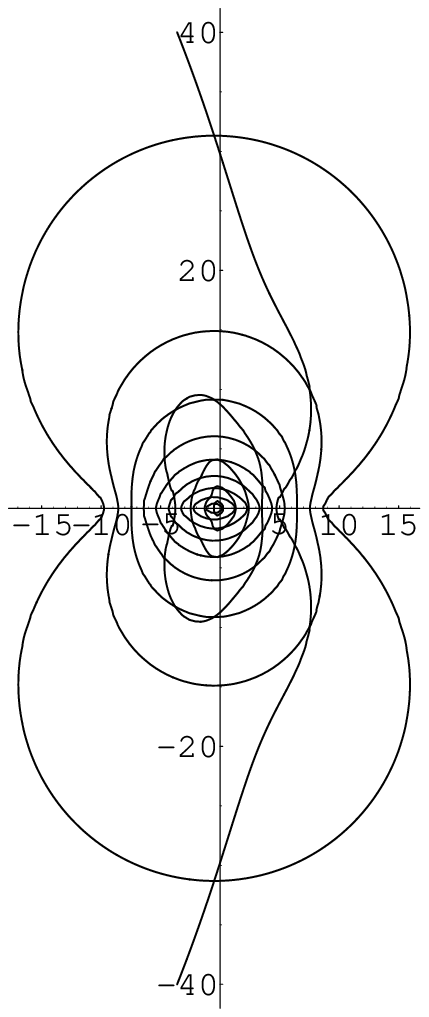}
  \includegraphics[height=.25\linewidth]{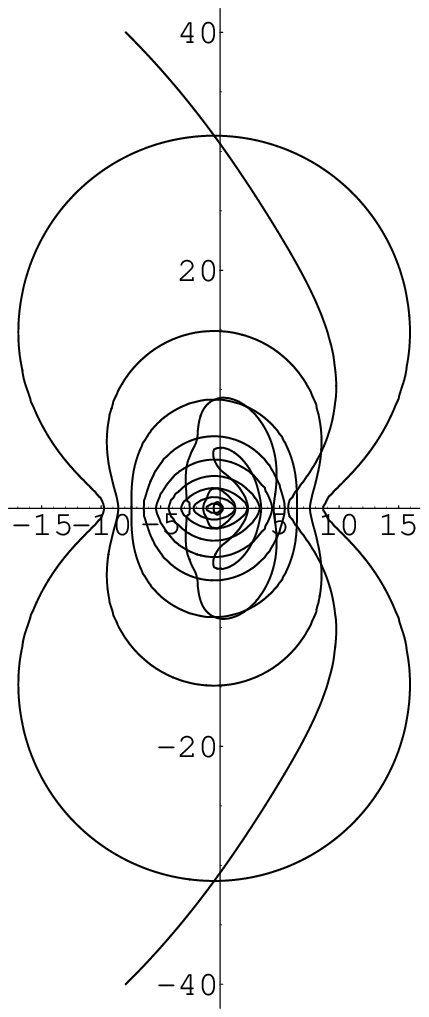}
  \includegraphics[height=.25\linewidth]{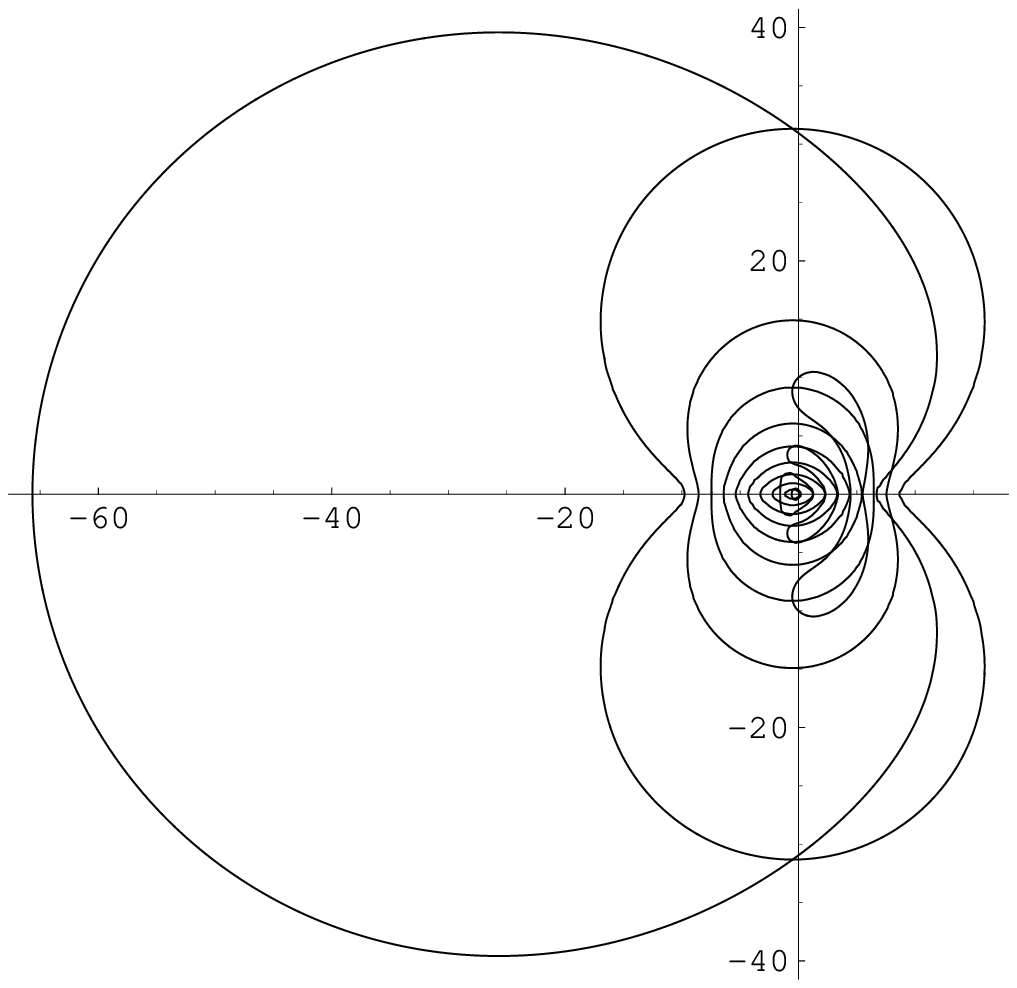}
  \includegraphics[height=.25\linewidth]{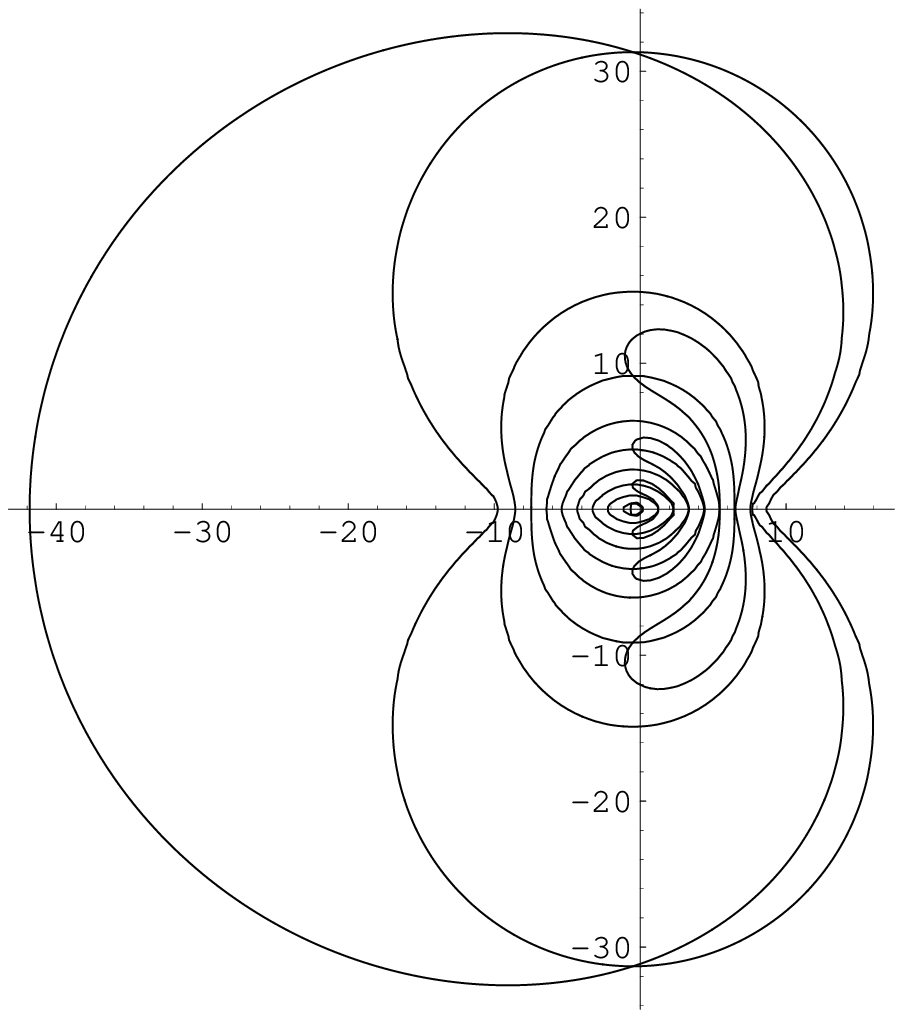}

  \caption{From top left to bottom right, the vanishing loci of
    $D_{0,d}$ and $\tW_{0,k,d}$, for $d=10$, $j=0$, $l=d$, and $1\le
    k\le d-1$. In the first four pictures, the outermost oval is only
    partially shown, but in fact curves around to the right and
    intersects the real axis at a point with large positive
    coordinate. Thus, the points with positive real part just
    inside~$\DD_{0,d;1}$ lie inside the outermost component of the
    zero locus of $\det\tW_{\{0,k,d\}}$ for $0<k<d/2$, but outside all
    components of $\det\tW_{\{0,k,d\}}$ for $d/2<k<d$.}
  \label{fig:plotd10}
\end{figure}


\subsection{Chromatic polynomials}\label{subsec:chromatic}

Let $G$ be a graph on $d$ vertices. The value of the chromatic
polynomial $P(G,t)$ of~$G$ at $z=t_0$ counts the number of colorings
of~$G$ with $z_0$~colors.  The chromatic number $\chi(G)$ is the first
positive integer that is not a zero of $P(G,t)$.

\begin{proposition}\label{prop:chromatic}  
  Let $G$ be an undirected graph on $d$ vertices with $m$ edges,
  $\kappa$~connected components, chromatic number $\chi=\chi(G)$, and
  $\omega$ acyclic orientations. Let $P(G,z)=\sum_{i=0}^d a_ib_i$ be
  the chromatic polynomial of~$G$ expressed in the basis
  $B=\{b_0,\dots,b_d\}$ of $P_d$.
  \begin{enumerate}[\upshape(a)]
  \item Let $b_i=(-1)^{d-i}z^i$, so that $B$ is the \emph{alternating
      power basis}. Then $a_i\ge0$ for $i=0,\dots,d$, $a_i=0$ for
    $i=0,\dots,\kappa-1$, $a_\kappa>0$, $a_{d-1}=m$ and $a_d=1$.
    \cite[Theorem~2.7]{Read-Tutte88}.
    
  \item Let $b_i=z^{\underline i}$, so that $B$ is the \emph{falling
      factorial basis}. Then $a_i\ge0$ for $i=0,\dots,d$
    \cite[Theorem~2.1]{Read-Tutte88}.
    
  \item Let $b_i=(-1)^{d-i}z^{\overline i}$, so that $B$ is the
    \emph{alternating rising factorial basis}. Then $a_i\ge0$ for
    $i=0,\dots,d$ \cite[Proposition~2.1]{Brenti-Royle-Wagner94}.
     
  \item Let $b_i= \binom{z+d-i}{i}$, so that $B$ is the \emph{binomial
      coefficient basis}. Then $a_i\ge0$ for $i=0,\dots,d$,
    $\sum_{i=0}^d a_i = d!$, $a_i=0$ precisely for $0\le i\le \chi-1$,
    and $a_d=\omega$ \cite[Proposition
    4.5]{Brenti92},~\cite{Brenti-Royle-Wagner94}.
  \end{enumerate}
\end{proposition}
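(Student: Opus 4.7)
The plan is to deduce each of the four parts from classical results on chromatic polynomials, all of which are already cited in the statement; the remaining work consists of verifying a handful of numerical identities.

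For~(a), Whitney's broken-circuit theorem gives $(-1)^{d-i}c_i\ge 0$ for the power-basis coefficients $c_i$ of $P(G,z)=\sum_i c_i z^i$, which is precisely $a_i\ge 0$ in the alternating power basis. The factorization $P(G,z)=z^\kappa \tilde P(G,z)$ with $\tilde P(G,0)\neq 0$ (each of the $\kappa$ connected components contributes one factor of $z$ to first order) forces $a_i=0$ for $i<\kappa$ and $a_\kappa>0$, while $a_d=1$ and $a_{d-1}=m$ are read off by matching the leading two monomial coefficients of $P(G,z)$, which equal $1$ and $-m$ respectively.

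For~(b), I would cite the classical identity $P(G,z)=\sum_i p_i(G)\,z^{\underline i}$, where $p_i(G)$ counts the number of partitions of $V(G)$ into $i$ nonempty independent sets; each $p_i(G)$ is a nonnegative integer. For~(c), I would derive nonnegativity from~(b) via Stanley's chromatic reciprocity $(-1)^d P(G,-z)=\bar\chi(G,z)$, where $\bar\chi(G,z)$ counts pairs consisting of an acyclic orientation and a compatible proper coloring; combined with~(b) applied to $\bar\chi$, this gives a nonnegative expansion of $\bar\chi(G,z)$ in the falling-factorial basis. The identity $(-z)^{\overline i}=(-1)^i z^{\underline i}$ then converts this into the required alternating rising-factorial expansion of~$P(G,z)$.

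For~(d), the nonnegativity of the $a_i$ is Brenti's theorem and I would invoke it directly. The three numerical claims admit short derivations from the sign pattern of $\binom{j+d-i}{d}$ at integer arguments. Matching leading coefficients in $P(G,z)=\sum_i a_i\binom{z+d-i}{d}$ gives $\sum_i a_i=d!$, because each $\binom{z+d-i}{d}$ has leading term $z^d/d!$ and $P(G,z)$ is monic of degree~$d$. The vanishing $a_i=0$ for $i<\chi$ follows by evaluating at $z=j\in\{0,1,\dots,\chi-1\}$: the binomial $\binom{j+d-i}{d}$ is strictly positive for $0\le i\le j$ and zero otherwise, so the equation $P(G,j)=\sum_{i\le j}a_i\binom{j+d-i}{d}=0$ combined with $a_i\ge 0$ forces $a_0=a_1=\dots=a_j=0$ by induction on~$j$. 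Finally, $P(G,-1)=a_d\binom{-1}{d}=(-1)^d a_d$, and Stanley's acyclic-orientation theorem $(-1)^d P(G,-1)=\omega$ yields $a_d=\omega$. The main obstacle is Brenti's nonnegativity in~(d), which does not admit a short combinatorial expansion and requires the algebraic machinery of his 1992 paper; the other three nonnegativity assertions reduce directly to classical identities, and all the numerical claims reduce to elementary evaluations as sketched.
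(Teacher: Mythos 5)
The paper gives no proof of this proposition at all: it is stated as a compilation of known results, with each part attributed to Read--Tutte, Brenti--Royle--Wagner, or Brenti, so there is no argument of the paper's to compare yours against. Your derivations are correct and consistent with those sources; two small points are worth tightening. In (c), you cannot literally ``apply (b) to $\bar\chi$'' since $\bar\chi(G,z)$ is not a chromatic polynomial; the nonnegative falling-factorial expansion of $\bar\chi$ comes instead from classifying compatible pairs by the ordered partition of $V$ into colour classes (or simply from the cited Proposition~2.1 of Brenti--Royle--Wagner). In (d), the word ``precisely'' also requires $a_\chi\neq0$, which follows at once from $P(G,\chi)=a_\chi>0$ after your inductive vanishing argument.
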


The roots of chromatic polynomials  simultaneously satisfy
all restrictions implied by these nonnegativity conditions. Here we
only treat two of these in any detail. 

\subsubsection{The alternating power basis}
To evaluate these conditions for the alternating power basis, only
slight modifications from the power basis case are needed. First,
$q_k=-2x$ instead of $q_k=2x$ in Proposition~\ref{prop:dualmatrix},
and so
\[
    D_{j,k}(z) \ = \ -\frac{z^{n+1}-\bz^{n+1}}{\bz-z}
\]
with $n=k-j-1$.  Next, the relations $a_0=\dots=a_{\kappa-1}=0$ say
that effectively,
\[
  \begin{pmatrix}
    R_\kappa & R_{\kappa+1} & \dots & R_d  \\
    I_\kappa & I_{\kappa+1} & \dots & I_d 
  \end{pmatrix}
  \begin{pmatrix}
    a_\kappa\\ \vdots \\ a_d 
  \end{pmatrix} 
  \ = \ 0,
\]
so that $\oW$ starts out with the column
$(p_\kappa,-q_\kappa,r_\kappa,0,\dots,0)^T$. But in the present case
of the alternating power basis, none of $p_k,q_k,r_k$ actually depends
on~$k$. The matrix~$\oW$ thus stays the same, only the effective
dimension has dropped to $d'=d-\kappa$.  The discussion in
Section~\ref{subsec:power} still applies, except that the excluded
region for roots of~$P(G,z)$ is now the opposite half-open sector,
i.e., the cone $\tau$ bounded by the lines of angles
$\pm(1-\frac{1}{d-\kappa})\pi$.
     
We may incorporate the linear equation $ma_d-a_{d-1}=0$ by appending
the row vector $(0,\dots,0,-1,m)$ of length $d-\kappa+1$ to~$W$, and
replacing the last two columns of $\oW$ by their linear combination
$(0,\dots,0,g,h,m,1)^T$ with $g=(m-2x)(x^2+y^2)$ and
$h=(m-2x)2x+x^2+y^2$. The rows of the resulting matrix $\oW'$
represent $d'+1$ vectors in $\RR^{d'-2}$, so any linear hyperplane
spanned by members of this set is defined by a linear form
$\varphi_{i,j,k,l}$. The signs of the values of this linear form on
the four row vectors $\ow'_i,\ow'_j,\ow'_k,\ow'_l$ are
\begin{eqnarray*}
   \sigma_{i,j,k,l} & = &
   \bigg\{
     (-1)^i \sign\det\oW'_{(j,k,l)},\
     (-1)^{j+1} \sign\det\oW'_{(i,k,l)},\\
     && \phantom{\bigg\{}
     (-1)^k \sign\det\oW'_{(i,j,l)},\
     (-1)^{l+1} \sign\det\oW'_{(i,j,k)}
   \bigg\},
\end{eqnarray*}
where $\oW'_{(i,j,k)}$, for instance, is 
obtained
from $\oW'$ by deleting rows $i,j,k$.  The sets of signs
\[
   \sigma_{i,j,k,d-1} \ = \ \bigg\{(-1)^i \sign D_{j,k},  \,
   (-1)^{j+1} \sign D_{i,k}, \, (-1)^k \sign D_{i,j}, 0 \bigg\}
\]
tell us that any root allowed by the conditions $a_i\ge0$ is also
allowed under the additional restriction $ma_d=a_{d-1}$, so that the
set of possible roots does not change under this restriction. 

\subsubsection{The binomial coefficient basis}

The relations $a_0=\dots=a_{\chi-1}=0$ say that effectively,
\[
  \begin{pmatrix}
    R_\chi & R_{\chi+1} & \dots & R_d  \\
    I_\chi & I_{\chi+1} & \dots & I_d 
  \end{pmatrix}
  \begin{pmatrix}
    a_\chi\\ \vdots \\ a_d 
  \end{pmatrix} 
  \ = \ 0,
\]
so that $\oW$ starts out with the column
$(p_\chi,-q_\chi,r_\chi,0,\dots,0)^t$. The transformation $x\mapsto
x+\chi$ maps $(p_{\chi+i},-q_{\chi+i},r_{\chi+i})$ to
$(p_i,-q_i,r_i)$, so after this translation the effective dimension
has dropped to $d'=d-\chi$. 

The two affine linear relations $a_\chi+\dots+a_d=d!$ and $a_d=\omega$
of course do not individually influence the location of roots, but may
be combined to the linear relation $\sum_{i=\chi}^{d}a_i -
\frac{1}{\e}a_d=0$ with $\e=\frac{\omega}{d!}$. A Gale dual compatible
with this linear relation is the matrix $\tW$ of size
$(d'+1)\times(d'-2)$ with columns
\[
   \tW \ = \ \left( v_1-v_0,\ \dots,\ v_{d'-3}-v_0,\ 
     \lambda v_{d'-2} - \mu v_0 \right),
\] 
where $v_{i}$ is the $i$-th column of $\oW$, and the
coefficients are $\lambda=\e d(d-1)$ and $\mu=\lambda-r_{d'-2}$. To
calculate the sets $\sigma_{i,j,k,l}$ of signs, we must evaluate the
determinant $[\tW]_K$ of the submatrix of $\tW$ obtained by deleting
the three rows indexed by $K=\{i,j,k\}$, say. By multilinearity of the
determinant, we obtain
\begin{eqnarray*}
  [\tW]_K &=& 
  \lambda\det\left( 
    v_1-v_0,\ \dots, v_{d'-3}-v_0,\ v_{d'-2}
  \right) - (-1)^{d'-3} 
  \mu\det\left( v_0,\ v_1,\ \dots, v_{d'-3}\right) \\
  &=&
  \lambda\sum_{c=0}^{d'-3} (-1)^c [\oW]_{K;c} + 
  (-1)^{d'-2} \mu [\oW]_{K;d'-2} \\
  &=&
  \frac{\omega}{(d-2)!}\sum_{c=0}^{d'-2} (-1)^c [\oW]_{K;c} - 
  (-1)^{d'-2} r_{d'-2} [\oW]_{K;d'-2}.
\end{eqnarray*}
This formula can be evaluated using Lemma~\ref{lem:minor}. In
Figure~\ref{fig:ChrEq} we show the zero loci of $[\tW]_K$ in the case
$d=4$ and $\omega=\frac{d!}{2}$.

\begin{figure}[htbp]
  \centering
  \includegraphics[width=.17\linewidth]{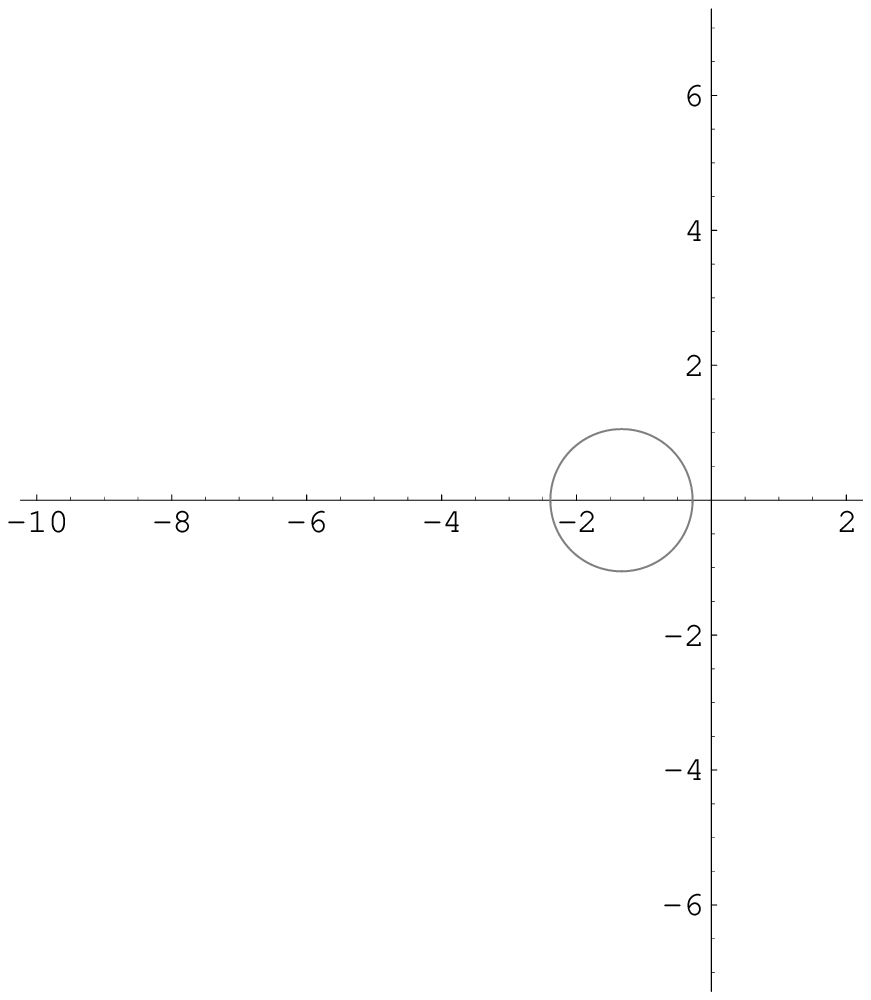}
  \includegraphics[width=.17\linewidth]{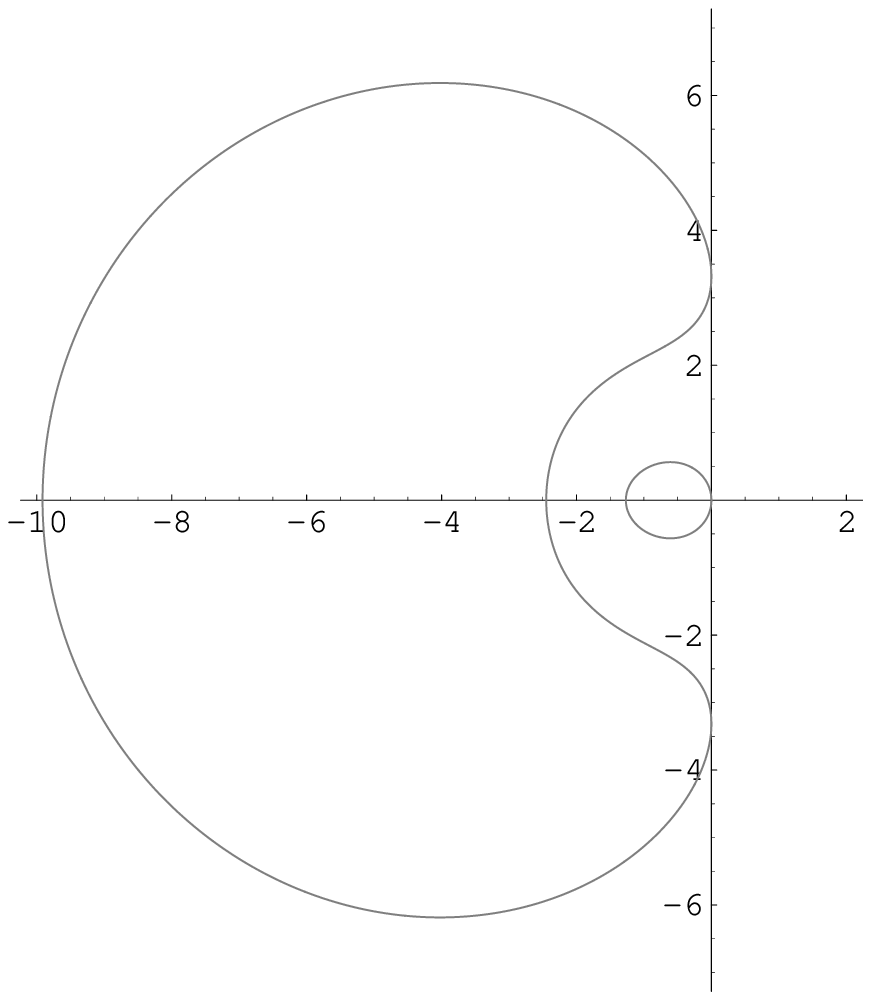}
  \includegraphics[width=.17\linewidth]{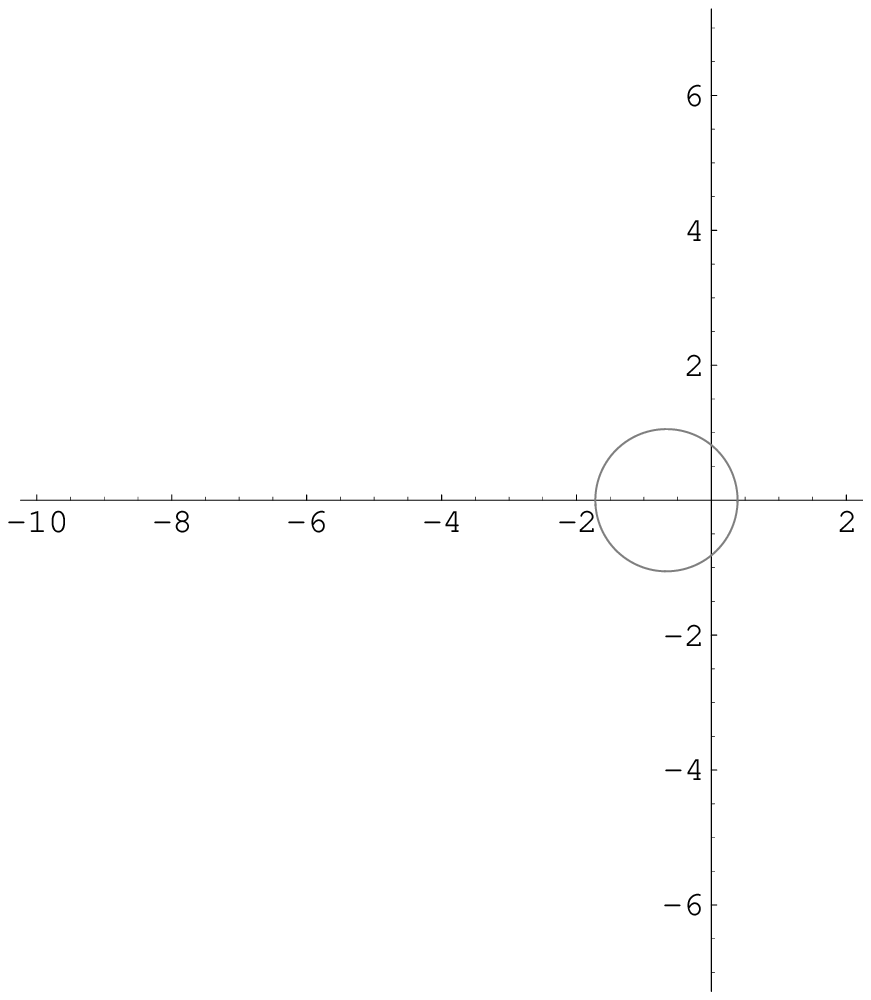}
  \includegraphics[width=.17\linewidth]{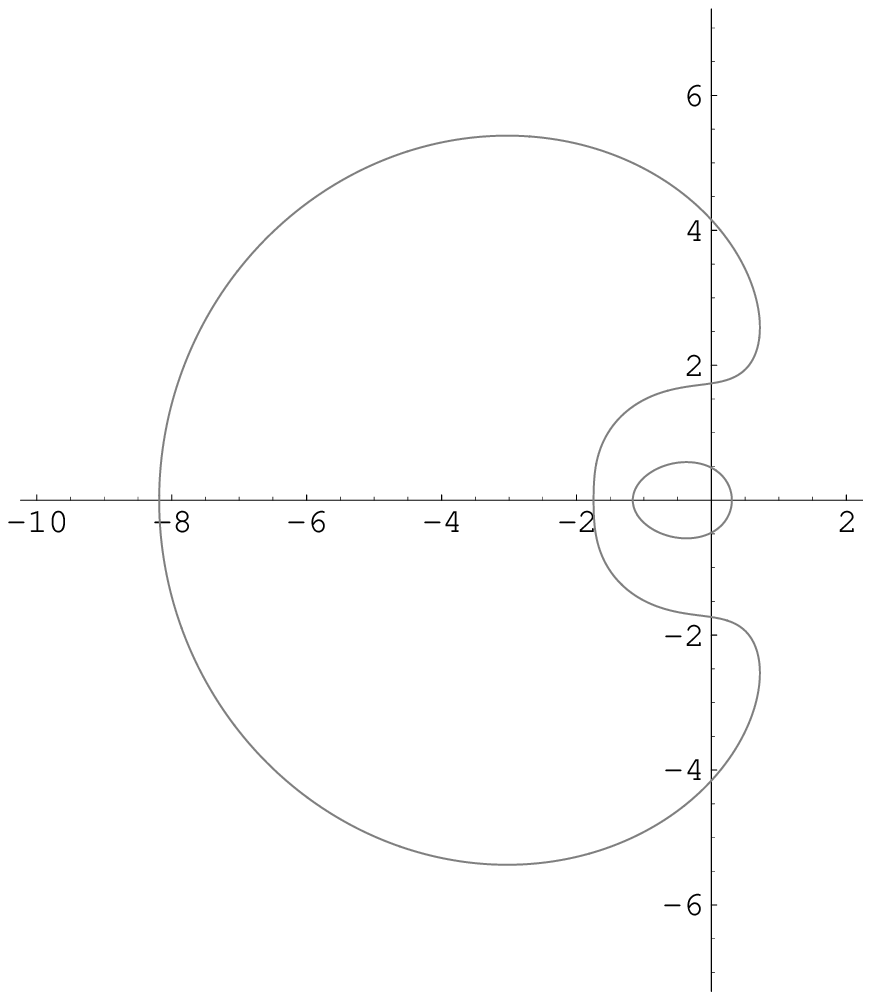}

  \includegraphics[width=.17\linewidth]{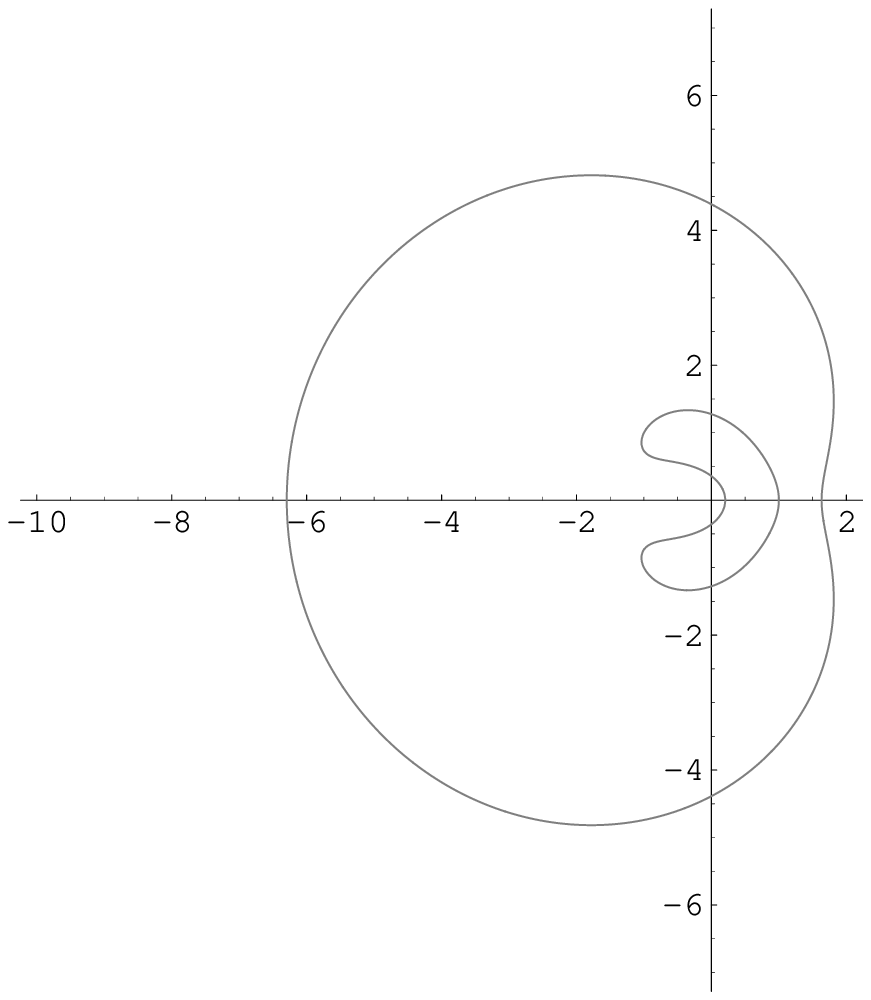}
  \includegraphics[width=.17\linewidth]{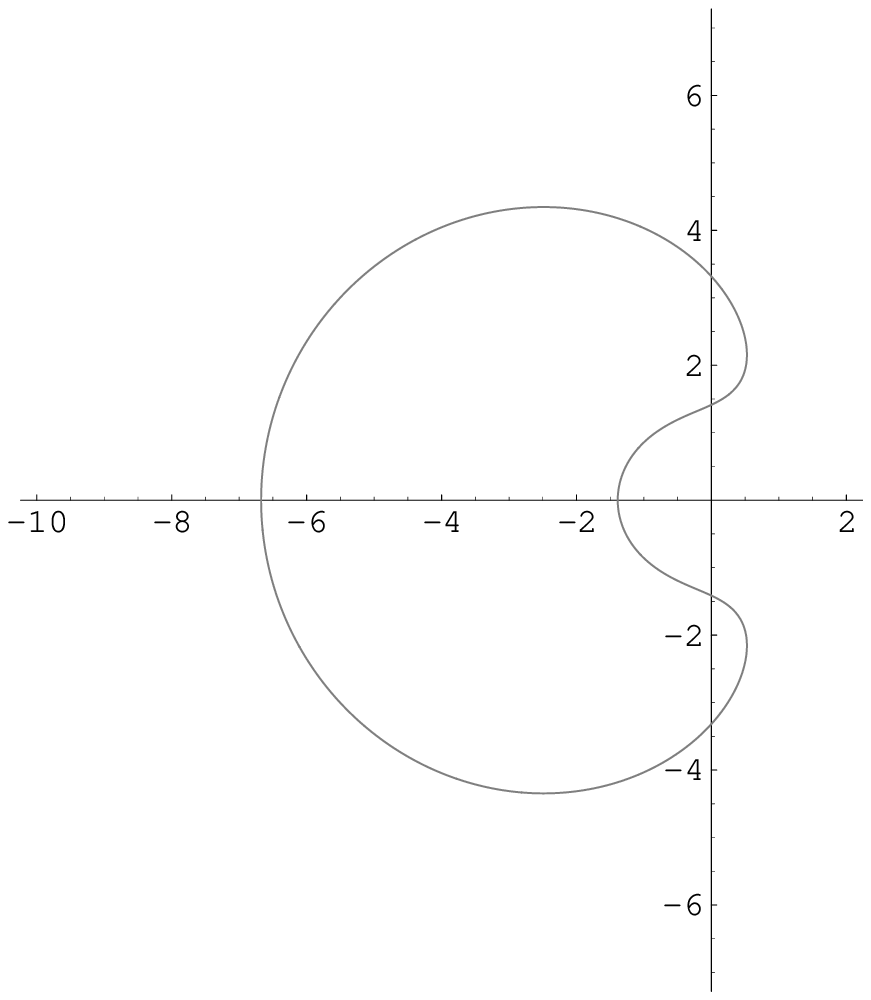}
  \includegraphics[width=.17\linewidth]{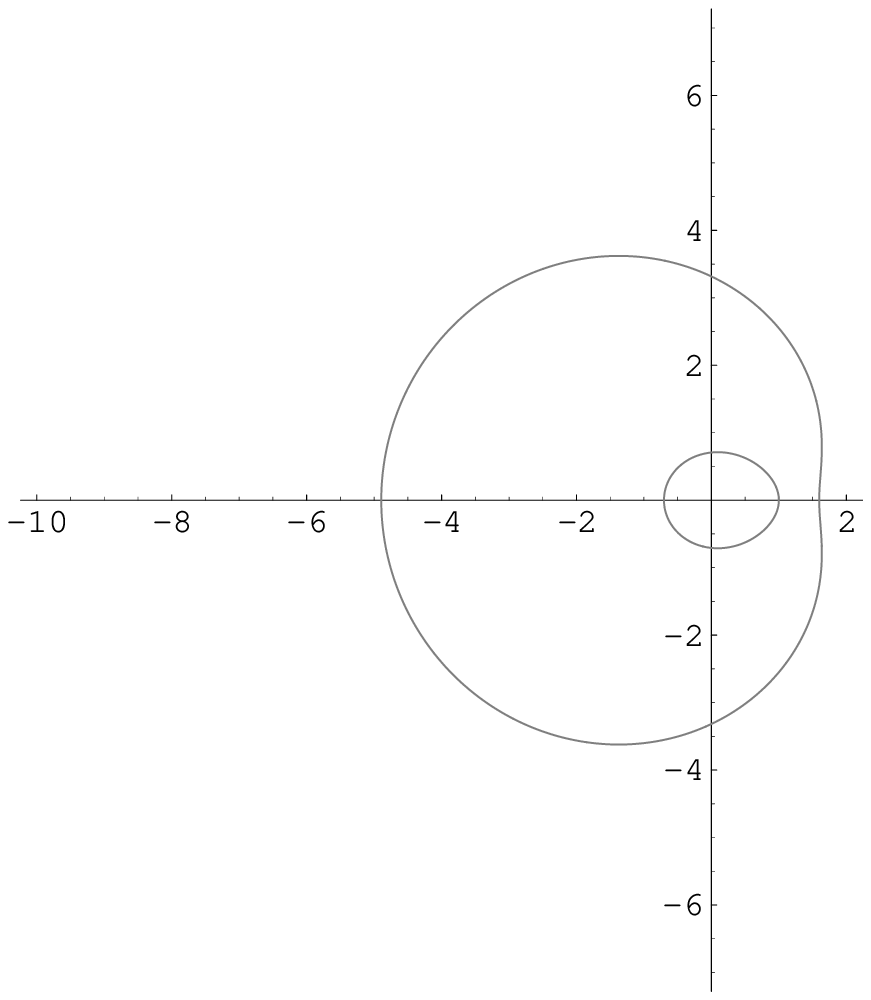}
  \includegraphics[width=.17\linewidth]{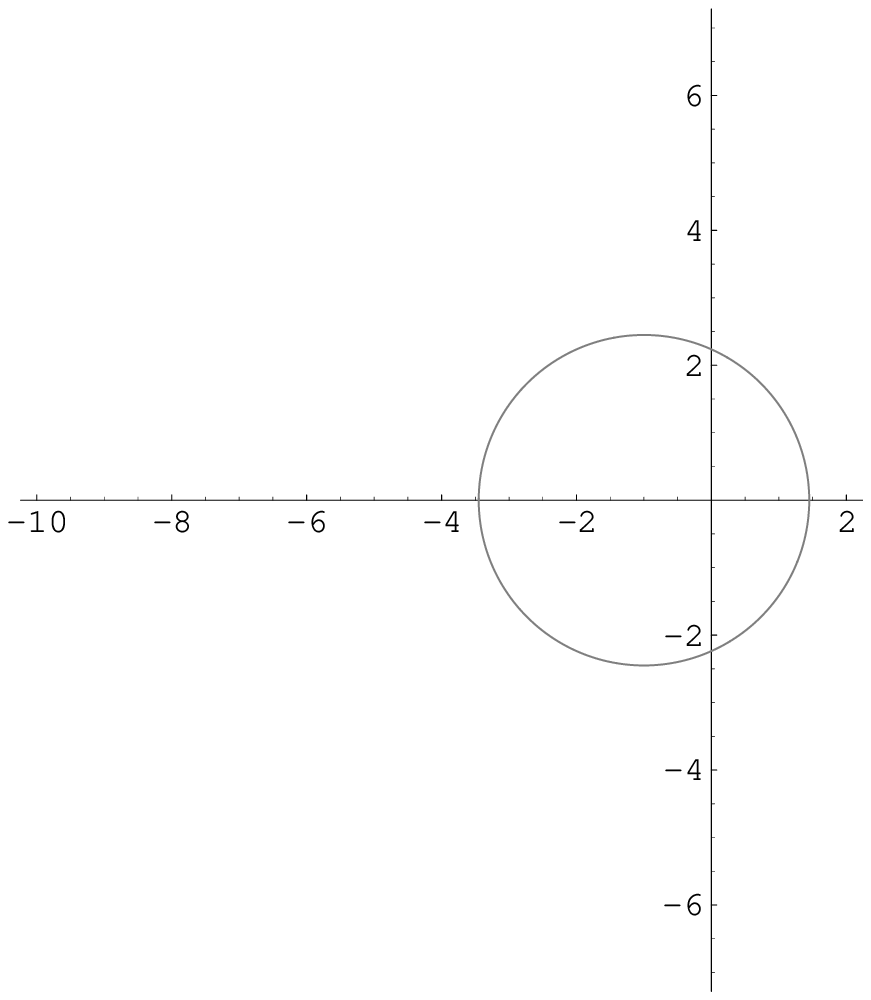}

  \includegraphics[width=.43\linewidth]{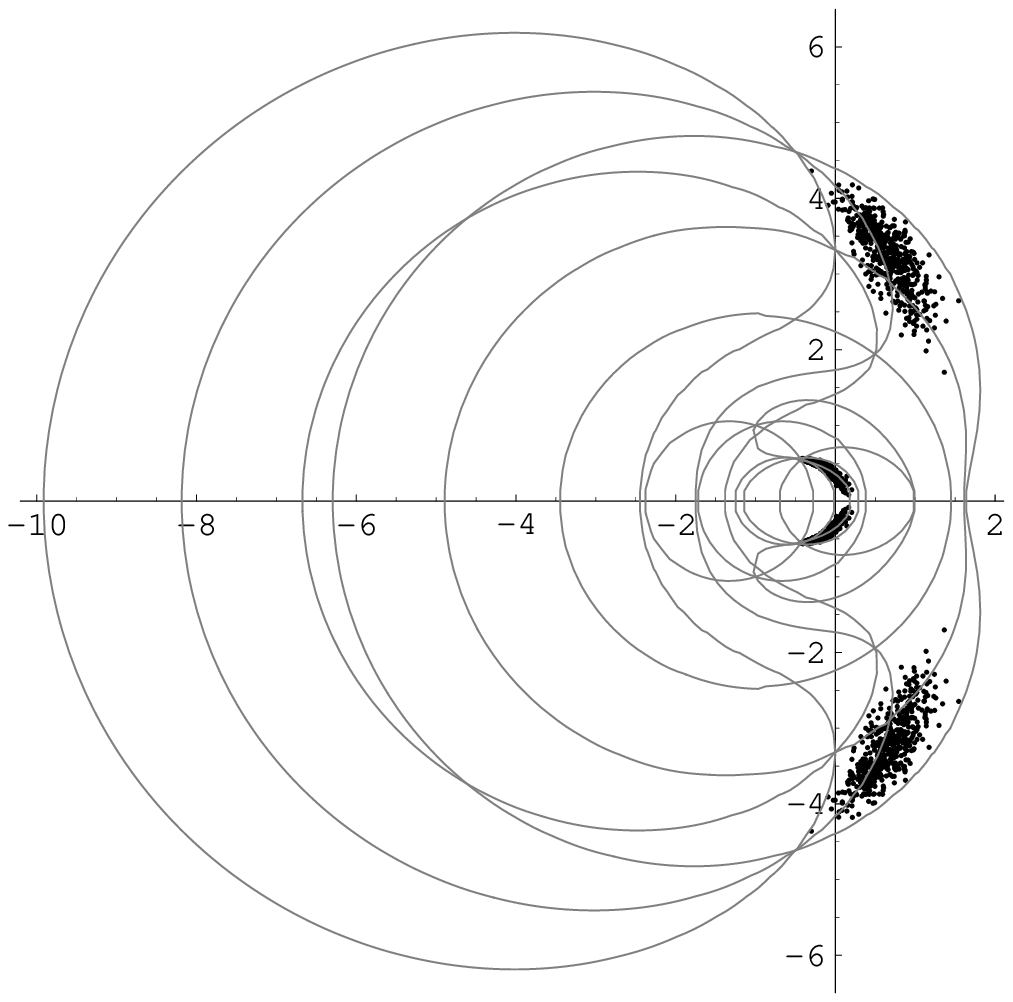}
  \caption{From left to right and top to bottom, the zero loci of
    $[\tW]_K$ for $d=4$ and $\e=\frac12$. Here $K$ runs through
    $\binom{\{0,\dots,d\}}{3}$ 
    in lexicographic order,
    except that the zero loci coresponding to $K=\{0,1,2\}$ and
    $K=\{1,2,3\}$ are empty and not shown. The last figure combines
    all the zero loci with the roots of $500$~random polynomials whose
    coefficients satisfy $\sum_{i=0}^{d}a_i - \frac{1}{\e}a_d=0$.}
  \label{fig:ChrEq}
\end{figure}

\section{Distribution of random roots}\label{sec:random}

In closing, we explain a phenomenon encountered several times in
the literature~\cite{Beck-etal05}, \cite{Braun-Develin06}: The roots
of ``randomly'' generated polynomials with nonnegative coefficients
tend to cluster together in several clumps, and usually lie well
inside the region permitted by theory; cf.~Figure~\ref{fig:cluster}.

\begin{figure}[htbp]
  \centering
  \includegraphics[width=.45\linewidth]{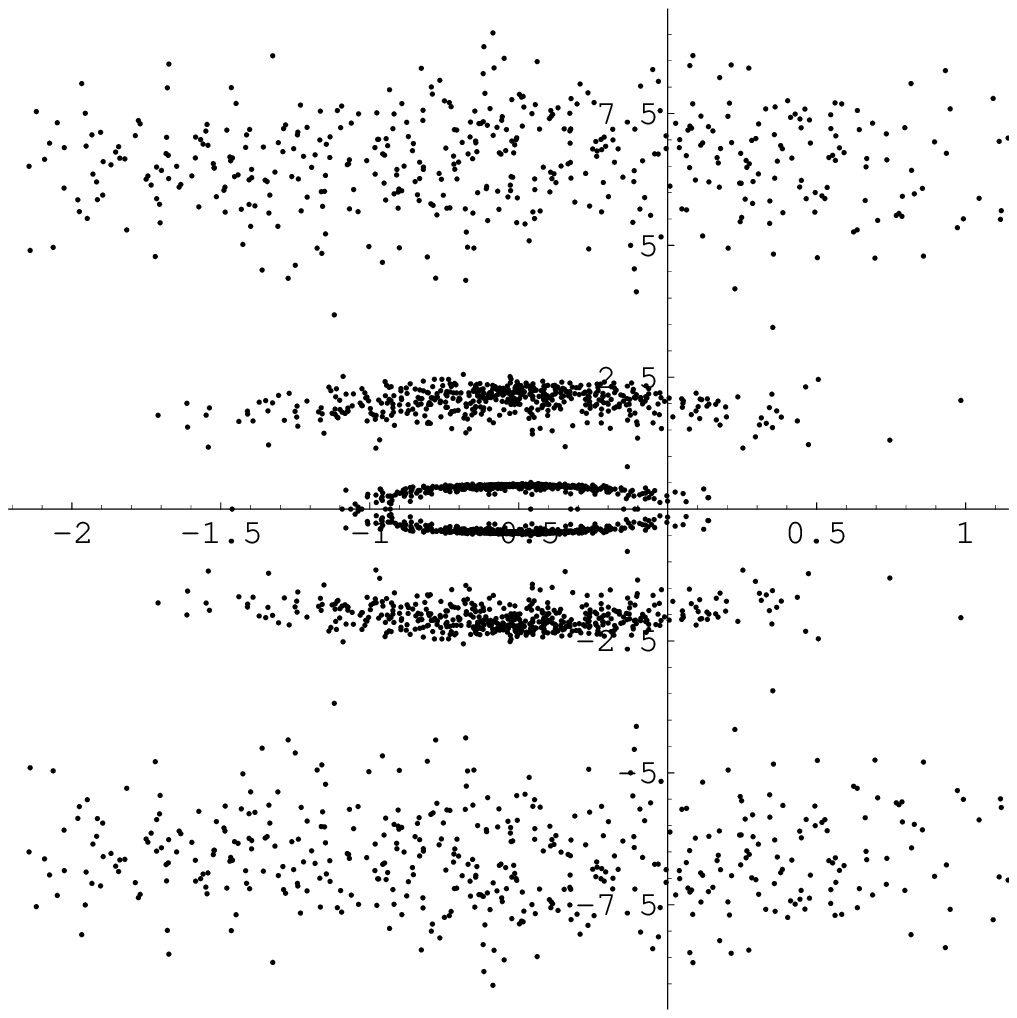} 
  \caption{The roots of 1000 random polynomials of
    degree $d=6$ with nonnegative coefficients in the binomial
    coefficient basis and $a_0,a_d\ne0$.}
  \label{fig:cluster}
\end{figure}
 
Our explanation is this: in these simulations, the coefficient vector
$(a_0,\dots,a_d)$ is usually picked uniformly at random from some cube
$[0,N]^{d+1}$ (except that sometimes the cases $a_0=0$ and $a_d=0$ are
excluded; we will gloss over this minor point). By linearity of
expectation, the expected value $E\big(f(z_0)\big)$ of
$f(z_0)=\sum_{i=0}^d a_i b_i(z_0)$ at a point $z_0\in\CC$ is
$\sum_{i=0}^d E(a_i) b_i(z_0) = \frac{N}{2}\sum_{i=0}^d b_i(z_0)$.
Thus, as a first approximation, the closer the barycenter
$\beta(z_0)=\sum_{i=0}^d b_i(z_0)$ is to zero, i.e., the smaller its
absolute value $|\beta(z_0)|$, the more likely it is for $z_0$ to be
a root of~$f$! For example, in the case of the binomial coefficient basis,
\[
   \beta(z_0) \ = \
   \sum_{i=0}^d \binom{z_0+d-i}{d} \ = \
   \sum_{i=0}^d \binom{z_0+i}{d} \ = \
   \binom{z_0+d+1}{d+1}-\binom{z_0}{d+1},
\]
by an elementary identity for binomial coefficients.
Figure~\ref{fig:contour} shows the regions where $|\beta(z_0)|$ is
small, together with the roots of several random polynomials. Note
that $\beta(z_0)$ is the Ehrhart polynomial of the simplex
${\rm conv}\{e_1,\dots,e_d,-e_1-\dots-e_d\}$
by~\cite[Proposition~1.3]{Henketal07}; see also~\cite{Villegas02}.

\begin{figure}[htbp]
  \centering
   \includegraphics[width=.4\linewidth]{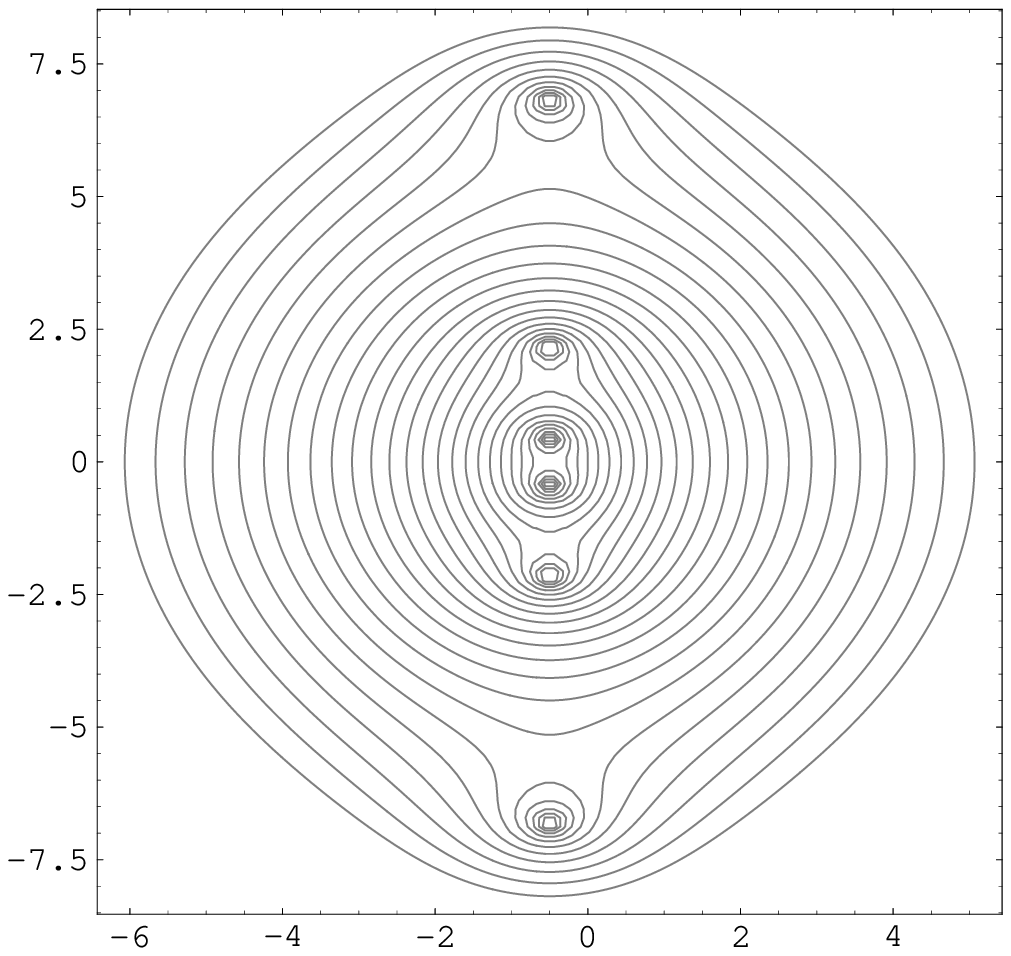}\qquad
   \includegraphics[width=.4\linewidth]{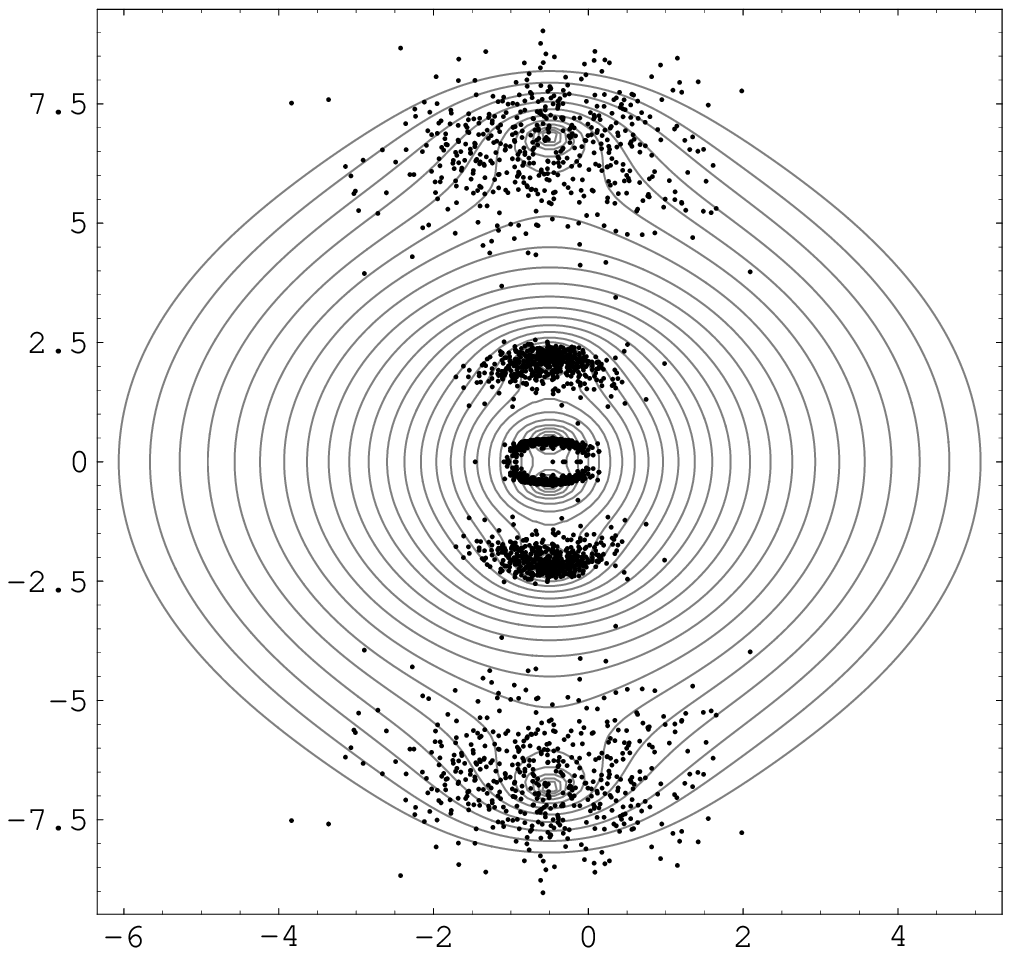}
   \caption{\emph{Left:} For $d=6$, the contours $|\beta(z_0)|=c$ for
     $b_i=\binom{z+d-i}{d}$ and varying~$c$; the innermost
     contours correspond to the smallest~$c$. \emph{Right:}
     additionally,  the roots of 500~polynomials of degree~$d$
     whose coefficients with respect to the $b_i$ are chosen uniformly
     at random from $[0,d!]$, except that $a_0,a_d\ne0$. }
  \label{fig:contour}
\end{figure}

Figure~\ref{fig:contourother} shows the corresponding regions for the
rising and falling factorial bases; in the case of the power basis
($b_i=z^i$), of course $\beta(z_0)=0$ iff $z_0\ne1$ is a $d$-th root
of unity.

\begin{figure}[htbp]
  \centering
  \includegraphics[width=.4\linewidth]{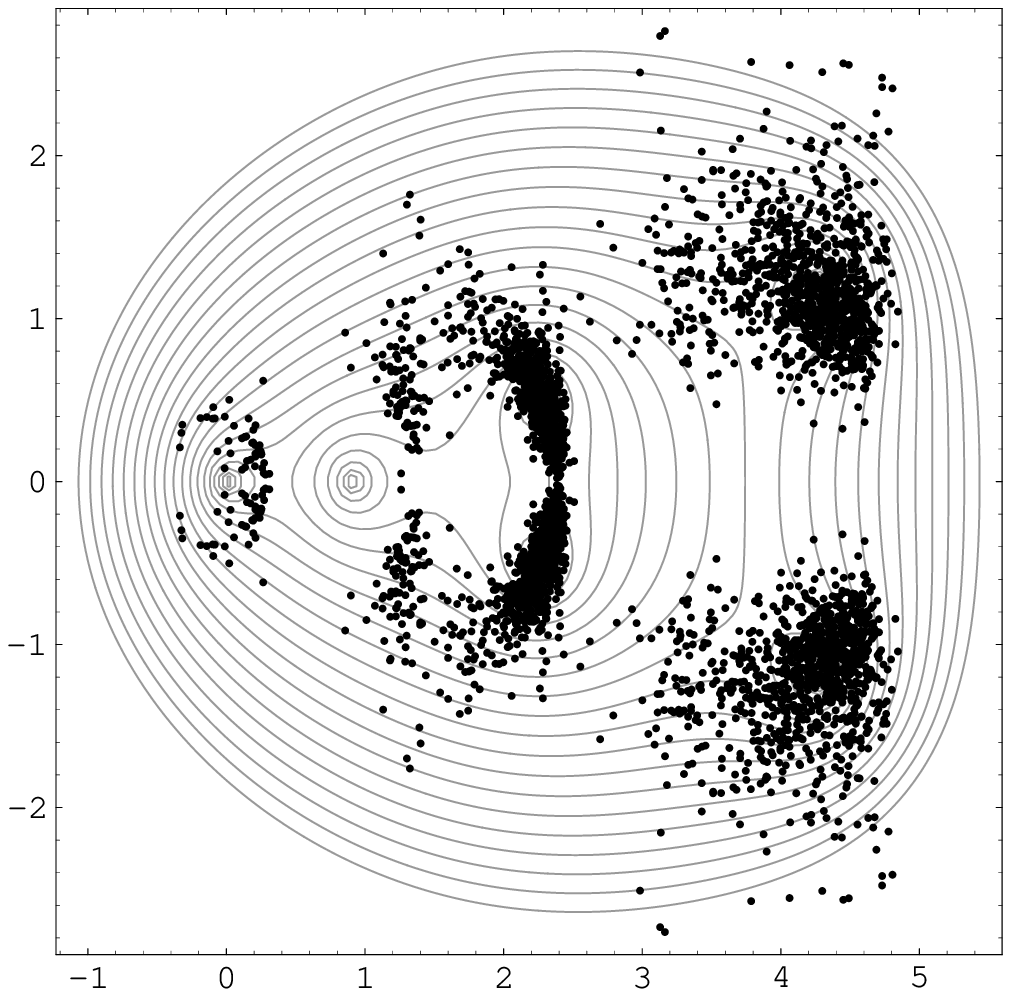}\qquad
  \includegraphics[width=.4\linewidth]{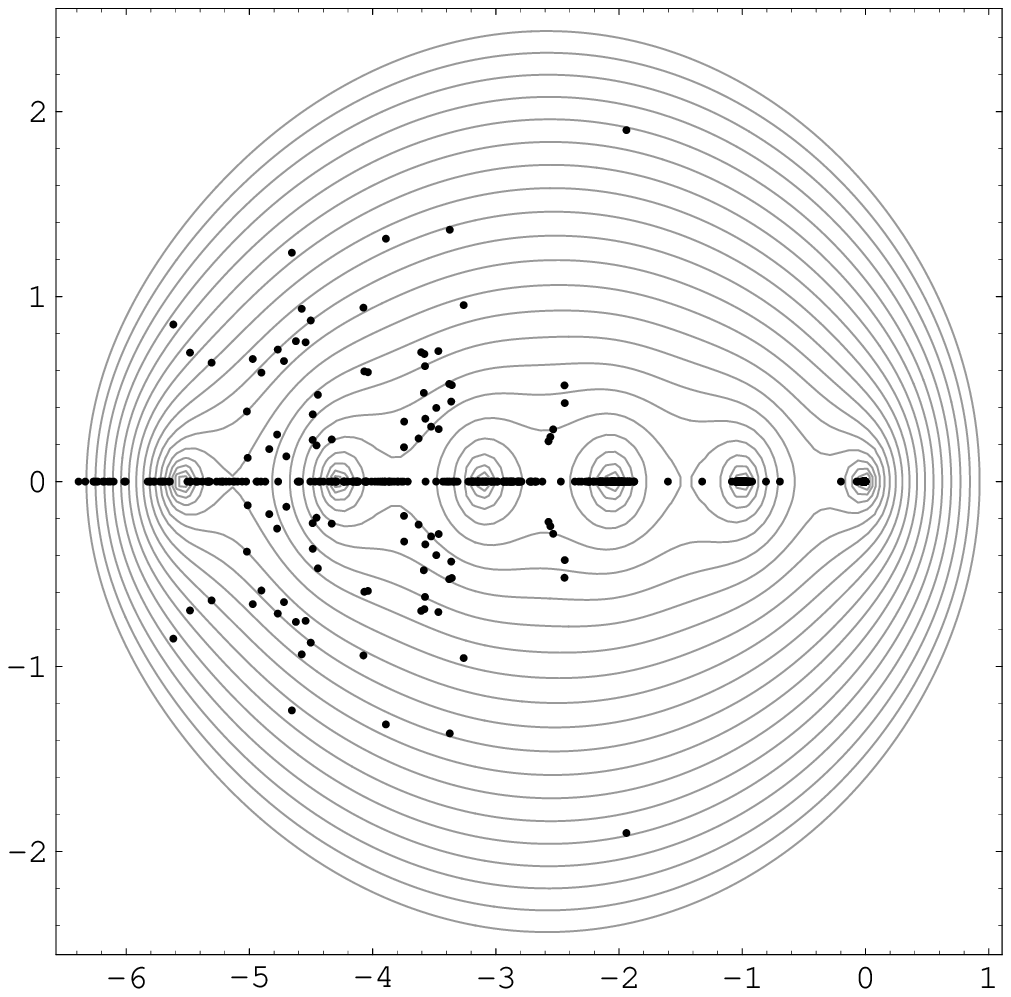}
  \caption{Contours $|\beta(z_0)|=c$ for $d=6$ and $b_i=z^{\underline
      i}$ (left), $b_i=z^{\overline{i}}$ (right), together with the
    roots of 1000, respectively 100 random polynomials with
    nonnegative coefficients with respect to these bases. For the rising
    factorial basis, the minima of $|\beta(z_0)|$ turn out to be real,
  so they only govern the distribution of the real zeros.}
  \label{fig:contourother}
\end{figure}

Clearly, the predictive power of this simple model can be easily
improved by considering additional parameters of the data; however, we
will not do this here.

\section*{Acknowledgements}

The author would like to thank Matthias Beck, Christian Haase and
G\"unter M.~Ziegler for helpful comments, and the two anonymous
referees for their careful reading and their pertinent suggestions
that have helped to improve the paper.

\section*{Appendix}

\begin{proof}[Proof of Lemma \ref{lem:leading}] It suffices to do the
  calculation for $D_n$ from equation~\eqref{eq:symDjk}.  So let's
  expand the difference
\begin{equation}\label{eq:diff}
   \prod_{i=1}^{n+1} (z-a_i)(\bz+a_i) - 
   \prod_{i=1}^{n+1} (\bz-a_i)(z+a_i),
\end{equation}
and pick out a term in the expansion with $l$ `$z$'s and $m-l$
`$\bz$'s. The coefficient of this term is a sum of terms of the form
\[
   (-1)^{m-l} a_{i_1}\cdots a_{i_{m-l}}a_{j_1}\cdots a_{j_l} -
   a_{i_1}\cdots a_{i_{m-l}} (-1)^l a_{i_{m-l}}a_{j_1}\cdots a_{j_l}, 
\]
and each of these terms vanishes for $m$ even. In particular, the term
$z^{n+1}\bz^{n+1}$ does not occur, which is also easy to see directly.
The first nonzero term in \eqref{eq:diff} is then
\[
   2 z^n\bz^{n+1} (-a_1-\dots-a_{n+1}) +
   2 z^{n+1}\bz^n (a_1+\dots+a_{n+1}) \ = \
   2z^n\bz^n(z-\bz)(a_1+\dots+a_{n+1}).
\]
It is  easy to work out $\sum_{i=1}^{n+1} a_i = d(n+1)/2$ for
$a_i=i+\Delta/2$, and this finishes the proof.
\end{proof}

\begin{proof}[Proof of Proposition \ref{prop:smooth}]
  The curve $\DD=\DD_n$ is also described by the equation
  $g=(h_1-h_2)/(\bz-z)$, where
  $h_1=\prod_{i=1}^{n+1}(z-a_i)(\bz+a_i)$,
  $h_2=\prod_{i=1}^{n+1}(z+a_i)(\bz-a_i)$, and $a_i=i+\Delta/2$ with
  $\Delta=d-2-n$. 
  Thus, $\DD$~has a singular point if
  and only if the Jacobi matrix of $g$ vanishes at some point of the
  locus $g=0$. Using the chain rule and the relations $\partial
  z/\partial x=1$, $\partial z/\partial y=i$, $\partial\bz/\partial x
  = 1$, $\partial \bz/\partial y = -i$, we calculate the partial
  derivatives of $g(z)$ with respect to $x$ and $y$:
  \begin{eqnarray*}
    \frac{\partial g(z)}{\partial x} &=&
    \frac{h_{1,z}-h_{2,z}+h_{1,\bz}-h_{2,\bz}}{\bz-z},\\
    \frac{\partial g(z)}{\partial y} &=&
    2i\,\frac{h_1-h_2}{(\bz-z)^2} + i\,
    \frac{h_{1,z}-h_{2,z}-h_{1,\bz}+h_{2,\bz}}{\bz-z}.
  \end{eqnarray*}
  Here $h_{j,z}$, $h_{j,\bz}$ denote the partial derivatives of
  $h_j$ with respect to $z$, $\bz$; by explicit differentiation,
  $h_{1,z}=\sum_{i=1}^{n+1} h_1/(z-a_i)$ and
  $h_{2,z}=\sum_{i=1}^{n+1} h_2/(z+a_i)$. 

  To prove that $\DD$ has no real singular points, we pick $z\in\DD$
  and calculate
  \begin{eqnarray*}
     \frac{\partial g}{\partial x}(z) &=&
     \frac{h_{1,z}+h_{1,\bz}}{\bz-z} - 
     \frac{h_{2,z}+h_{2,\bz}}{\bz-z} \notag\\
     &=&
     \frac{h_1}{\bz-z}
     \sum_{i=1}^{n+1} \left(
       \frac{1}{z-a_i} + \frac{1}{\bz+a_i}
       - \frac{1}{z+a_i} - \frac{1}{\bz-a_i}
     \right)\notag\\
     &=&
     \frac{h_1}{\bz-z}
     \sum_{i=1}^{n+1}
     \frac{2a_i(\bz^2-z^2)}{(z^2-a_i^2)(\bz^2-a_i^2)} \notag\\
     &=&
     4\,h_1
     \sum_{i=1}^{n+1}
     \frac{x  a_i}{\big((x-a_i)^2+y^2\big)\big((x+a_i)^2+y^2\big)}.
  \end{eqnarray*}
  For real nonzero $z\in\DD$, this expression never vanishes.  The
  same calculation already proves the second statement, because a
  tangent vector to the curve $g(x,y)=0$ at a non-singular point
  $(x_0,y_0)$ is given by $\pm\big({-\frac{\partial g}{\partial
      y}}(x_0,y_0),\ \frac{\partial g}{\partial x}(x_0,y_0)\big)$.
  
  We now examine a non-real singular point $z_0$ of $\DD$. Any such
  point must satisfy
  \[
     \frac{h_1(z_0)-h_2(z_0)}{\bz_0-z_0} \ = \ 
     0 \ = \ h_{1,z}(z_0)-h_{2,z}(z_0).
  \]
  The first equation tells us that $h_1(z_0)=h_2(z_0)$, so that
  $h_{1,z}(z_0)-h_{2,z}(z_0)=0$ if and only if $h_1(z_0)=0$
  (which is incompatible with $z_0\notin\RR$ and $g(z_0)=0$), or
  \[
     0 \ = \ \sum_{i=1}^{n+1}\frac{1}{z_0-a_i} - 
     \sum_{i=1}^{n+1}\frac{1}{z_0+a_i}\ = \
     2 \sum_{i=1}^{n+1}\frac{a_i}{z_0^2-a_i^2}.
  \]
  Writing $z_0^2=x_0+iy_0$ and separating the real and imaginary parts in
  the last expression yields
  \[
    \sum_{i=1}^{n+1}\frac{a_i}{(x_0-a_i)^2+y_0^2} \ = \
    \sum_{i=1}^{n+1}\frac{a_i^3}{(x_0-a_i)^2+y_0^2} \ = \ 0.
  \]
  But the denominators of these expressions are positive (the $a_i$
  and the origin do not lie on~$\DD$), and $a_i>0$ for
  $i=1,\dots,n+1$, so we conclude that $\DD$ has no singular points.
\end{proof}

\bibliographystyle{siam}
\bibliography{ehrhartzeros}

\end{document}